\documentclass[a4paper,12pt]{article}
\usepackage{amsmath}
\usepackage{amssymb}
\usepackage{amsthm}
\usepackage[english]{babel}

\usepackage{centernot}
\usepackage{paralist}
\usepackage{enumerate}
\setdefaultleftmargin{18pt}{}{}{}{}{}

\newtheorem{theor}{Theorem}[section]

\newtheorem{lemma}[theor]{Lemma}
\newtheorem{cor}[theor]{Corollary}
\newtheorem{conj}[theor]{Conjecture}

\newtheorem{defi}[theor]{Definition}
\newtheorem*{thm}{Theorem \ref{t15}}

\renewcommand{\labelenumi}{\bf\arabic{enumi})}
\def\theenumi{(\roman{enumi})}

\newcommand{\Ind}{\mathrm{Ind}}
\newcommand{\Irr}{\mathrm{Irr}}
\newcommand{\N}{\mathbb{N}}
\newcommand{\Z}{\mathbb{Z}}

\begin{document}

\begin{center}
{\Large 
$p$-vanishing conjugacy classes of symmetric groups}

Lucia Morotti
\end{center}

\begin{abstract}
For a prime $p$, we say that a conjugacy class of a finite group $G$ is $p$-vanishing if every irreducible character of $G$ of degree divisible by $p$ takes value 0 on that conjugacy class. In this paper we completely classify 2-vanishing and 3-vanishing conjugacy classes for the symmetric group and do some work in the classification of $p$-vanishing conjugacy classes of the symmetric group for $p\geq 5$. This answers a question by Navarro for $p=2$ and $p=3$ and partly answers it for $p\geq 5$.
\end{abstract}

\section{Introduction}

Let $p$ be a prime and $n$ a non-negative integer. The work presented here started from the following question of Navarro to Olsson (December 2010):

\noindent ``What are the elements $x$ of the symmetric group $S_n$ such that $\chi(x)=0$ for all $\chi\in\Irr(S_n)$ of degree divisible by $p$?''

We start with some definitions.

\begin{defi}[$p$-singular character]
Let $\chi$ be an irreducible character of a finite group and let $p$ be a prime. We say that $\chi$ is
\emph{$p$-singular} if $p$ divides its degree.
\end{defi}

\begin{defi}[$p$-vanishing class]
A conjugacy class of a finite group $G$ is called \emph{$p$-vanishing} if all
$p$-singular characters of $G$ take value 0 on that conjugacy
class.
\end{defi}

We will say that a partition of $n$ is $p$-vanishing if it labels a $p$-vanishing conjugacy class of $S_n$.

Let $n=a_kp^k+\ldots+a_0$ be the $p$-adic decomposition of $n$ (with
$a_k\not=0$). We will also often fix some $t\geq 0$ and write $n=d_tp^t+e_t$
with $d_t\geq 0$ and $0\leq e_t<p^t$. This notation is now fix and will be used throughout the paper. Notice that, if $t\leq k$, then $d_t=a_kp^{k-t}+\ldots+a_t$ and that
$e_t=a_{t-1}p^{t-1}+\ldots+a_0$ (while, if $t>k$, then $d_t=0$ and $e_t=n$).

\begin{defi}[Partition of $p$-adic type]
A partition of $n$ is of \emph{$p$-adic type} if it is of the form
\[\left(f_{k,1}p^k,\ldots,f_{k,h_k}p^k,\ldots,f_{0,1},\ldots,f_{0,h_0}\right)\]
with $(f_{i,1},\ldots,f_{i,h_i})\vdash a_i$ for $0\leq i\leq k$.
\end{defi}

In this definition, and throughout this paper, $(f_{i,1},\ldots,f_{i,h_i})\vdash a_i$ means that $(f_{i,1},\ldots,f_{i,h_i})$ is a partition of $a_i$. For example the partition $\lambda_{n,p}:=((p^k)^{a_k},\ldots,1^{a_0})$ is a
partition of $p$-adic type. As $0\leq a_i<p$ for $0\leq i\leq k$ we have that a
partition $\alpha$ is of $p$-adic type if and only if
\[\sum_{{j:p^i|\alpha_j,}\atop{p^{i+1}\centernot|\alpha_j}}\alpha_j=a_ip^i\]
for $0\leq i\leq k$.

In \cite{b5}, Malle, Navarro and Olsson proved that $\lambda_{n,p}$ is
$p$-vanishing. For $\alpha,\beta\vdash n$ let $\chi^\alpha$ be the irreducible character of $S_n$ labeled by $\alpha$ and let $\chi^\alpha_\beta$ be the value of $\chi^\alpha$ on the conjugacy class labeled by $\beta$. In Theorem 1.4 of \cite{m2} the following classification of
$p$-singular irreducible characters for $S_n$ was proved.

\begin{theor}\label{c5}
Let $\alpha\vdash n$. The following are equivalent:
\begin{enumerate}
\def\theenumi{(\roman{enumi})}
\renewcommand{\labelenumi}{(\roman{enumi})}
\item\label{i5}
$\chi^\alpha$ is $p$-singular.

\item\label{i3}
$\chi^\alpha_\beta=0$ for every $\beta\vdash n$ of $p$-adic type.

\item\label{i2}
$\chi^\alpha_{\lambda_{n,p}}=0$.

\item\label{i4}
We cannot remove from $\alpha$ a sequence of hooks of lengths given, in order,
by the parts of $\lambda_{n,p}$.
\end{enumerate}
\end{theor}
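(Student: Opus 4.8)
\emph{Proof plan.} The plan is to drive everything through the Murnaghan--Nakayama (MN) rule together with the hook-length formula and the abacus description of the iterated cores. For $i\ge1$ let $\alpha_{(p^i)}$ denote the $p^i$-core of $\alpha$ and put $c_i=|\alpha_{(p^i)}|$; recall $c_i\equiv n\equiv e_i\pmod{p^i}$, so $c_i\ge e_i$. The first step is to compute the $p$-part of the degree. Since $\chi^\alpha(1)=n!/\prod h$, and on the $p^i$-abacus the number of cells of $\alpha$ of hook length divisible by $p^i$ equals the number of bead-moves to the $p^i$-core, namely $(n-c_i)/p^i$, I would obtain
\[v_p(\chi^\alpha(1))=\sum_{i\ge1}\Bigl(\lfloor n/p^i\rfloor-\tfrac{n-c_i}{p^i}\Bigr)=\sum_{i\ge1}\frac{c_i-e_i}{p^i}.\]
Each summand is a non-negative integer, so \ref{i5} holds precisely when $c_i>e_i$ for some $i$.

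Next I would settle \ref{i5}$\Leftrightarrow$\ref{i4} by a downward induction on $i$. Removing the parts of $\lambda_{n,p}$ of length $p^k$ uses up $p^k$-weight $a_k$, and the available weight is $(n-c_k)/p^k=a_k-(c_k-e_k)/p^k$; this is $\ge a_k$ iff $c_k=e_k$, in which case it is \emph{exactly} $a_k$, so the $a_k$ hooks strip $\alpha$ down to $\alpha_{(p^k)}$. Using that $\alpha_{(p^{i})}=(\alpha_{(p^{i+1})})_{(p^{i})}$ one descends a level: the $p^{k-1}$-weight of $\alpha_{(p^k)}$ is $(e_k-c_{k-1})/p^{k-1}$, again equal to $a_{k-1}$ exactly when $c_{k-1}=e_{k-1}$, and so on. Hence the full sequence can be stripped iff $c_i=e_i$ for all $i$, which by the degree formula is exactly the failure of \ref{i5}; since \ref{i4} asserts that the sequence cannot be stripped, this yields \ref{i5}$\Leftrightarrow$\ref{i4}.

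It remains to run the cycle \ref{i5}$\Rightarrow$\ref{i3}$\Rightarrow$\ref{i2}$\Rightarrow$\ref{i5}. For \ref{i5}$\Rightarrow$\ref{i3}, let $\beta$ be of $p$-adic type, so its parts divisible by $p^i$ total $d_ip^i$. In the MN recursion each removed border strip of length $mp^i$ lowers the $p^i$-weight by exactly $m$, so stripping all $p^i$-divisible parts consumes $p^i$-weight $d_i$; but \ref{i5} gives some $c_j>e_j$, whence the available $p^j$-weight $(n-c_j)/p^j<d_j$ is too small, and \emph{every} branch of the recursion stalls, forcing $\chi^\alpha_\beta=0$. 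The implication \ref{i3}$\Rightarrow$\ref{i2} is immediate since $\lambda_{n,p}$ is itself of $p$-adic type. Finally \ref{i2}$\Rightarrow$\ref{i5} is the contrapositive of: if $c_i=e_i$ for all $i$ then $\chi^\alpha_{\lambda_{n,p}}\ne0$. Here I would prove, by the level-by-level stripping above, that the MN contributions factor through the successive $p^i$-quotients into a product $\chi^\alpha_{\lambda_{n,p}}=\pm\prod_i\chi^{\delta^{(i)}}_{(1^{a_i})}=\pm\prod_i\chi^{\delta^{(i)}}(1)$ of degrees of characters of the $S_{a_i}$ attached to the $p$-core tower, which is manifestly nonzero.

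The main obstacle is this last non-cancellation statement. Showing that the MN sum does not collapse --- equivalently that the signs $(-1)^{\mathrm{leg}}$ cohere once the stripping is forced to pass through the cores $\alpha_{(p^i)}$ --- requires the careful reduction to the $p^i$-quotients and the bookkeeping of the overall sign across all levels; by contrast the vanishing direction \ref{i5}$\Rightarrow$\ref{i3} needs only the weight count. I would expect the factorization through the $p$-core tower to be the technical heart of the argument.
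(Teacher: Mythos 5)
The first thing to say is that this paper contains no proof of Theorem \ref{c5} at all: it is imported from Theorem 1.4 of \cite{m2}, so there is no internal argument to compare yours against, and the right benchmark is the standard argument in the literature — which is essentially what you have reconstructed. Your plan is correct in outline. The valuation formula $v_p(\chi^\alpha(1))=\sum_{i\geq 1}(c_i-e_i)/p^i$ is Macdonald's theorem (the paper's Lemma \ref{l14} is the same fact repackaged via the quantities $b_i(\alpha)$), and your descent argument for \ref{i5}$\Leftrightarrow$\ref{i4} is sound, resting on the core compatibility $(\alpha_{(p^{i+1})})_{(p^i)}=\alpha_{(p^i)}$, which follows from Lemma \ref{l1}. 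Two points need to be made explicit to close the plan. First, in \ref{i5}$\Rightarrow$\ref{i3} the weight count is only valid because in a partition of $p$-adic type every part divisible by $p^j$ exceeds every part not divisible by $p^j$ (a part at level $i<j$ is less than $p^{i+1}\leq p^j$), so that when the Murnaghan--Nakayama recursion is run in decreasing order all $p^j$-divisible hooks are removed first; this matters, since removing a hook of length prime to $p^j$ can \emph{increase} the $p^j$-weight (removing a $1$-hook from $(2,1)$ raises the $2$-weight from $0$ to $1$), so an unordered ``consumption of weight'' argument would be fallacious. Second, the ``technical heart'' you defer — the non-cancellation needed for \ref{i2}$\Rightarrow$\ref{i5} — does not have to be built from scratch: it is exactly Lemma \ref{l3} of the paper (4.58 of \cite{r1}). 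Under the hypothesis $c_i=e_i$ for all $i$ one has $w_{p^k}(\alpha)=a_k$, so the class $\lambda_{n,p}$ factors as $a_k$ cycles of length $p^k$ times an element of cycle type $\lambda_{e_k,p}$, and that lemma gives $\chi^\alpha_{\lambda_{n,p}}=\pm\,\chi^{\alpha_{(p^k)}}_{\lambda_{e_k,p}}\chi^{\alpha^{(p^k)}}_{(1^{a_k})}$; the last factor is the degree of the character induced from the components of the $p^k$-quotient, hence positive, and downward induction applied to $\alpha_{(p^k)}$ (whose $p^i$-cores are again the $\alpha_{(p^i)}$, of sizes $e_i$) yields the product over levels that you predicted, with all the sign bookkeeping you worried about absorbed into a single sign $\delta_{p^k}$ at each level. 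With those two insertions your proposal is a complete and correct proof.
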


In this paper we will often use the equivalence of \ref{i5} and \ref{i4}. Also
the following holds (Corollary 1.5 of \cite{m2}).

\begin{cor}\label{t12}
Partitions of $p$-adic type are $p$-vanishing.
\end{cor}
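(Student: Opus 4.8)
The plan is to read off the statement directly from Theorem \ref{c5}, since the corollary is essentially a repackaging of the implication \ref{i5}$\Rightarrow$\ref{i3}. First I would fix an arbitrary partition $\beta\vdash n$ of $p$-adic type and unwind the definition: the conjugacy class labelled by $\beta$ is $p$-vanishing precisely when $\chi^\alpha_\beta=0$ for every $\alpha\vdash n$ for which $\chi^\alpha$ is $p$-singular. So it suffices to take an arbitrary $p$-singular character $\chi^\alpha$ and check that it vanishes on $\beta$.

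Now I would apply Theorem \ref{c5} to this $\alpha$. Since $\chi^\alpha$ is $p$-singular, condition \ref{i5} holds, and hence by the equivalence \ref{i5}$\Leftrightarrow$\ref{i3} so does condition \ref{i3}: that is, $\chi^\alpha_\gamma=0$ for every $\gamma\vdash n$ of $p$-adic type. Specialising $\gamma=\beta$ yields $\chi^\alpha_\beta=0$, which is exactly what is required. As $\alpha$ was an arbitrary label of a $p$-singular character, every $p$-singular character vanishes on the class of $\beta$, so $\beta$ is $p$-vanishing.

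I do not expect a genuine obstacle here, since the argument invokes only one direction of an already-established theorem; all the real content sits inside Theorem \ref{c5}. The one point to be careful about is the logical bookkeeping: the direction needed is \ref{i5}$\Rightarrow$\ref{i3} (a $p$-singular character vanishes on \emph{all} $p$-adic-type classes), and one quantifies over all $p$-singular $\alpha$ while keeping $\beta$ fixed. I have renamed the bound variable of condition \ref{i3} to $\gamma$ to avoid colliding with the fixed $\beta$ of the corollary. It is worth emphasising that this corollary supplies only the easy inclusion: it says partitions of $p$-adic type are $p$-vanishing, but not conversely, and it is precisely the reverse containment that the remainder of the paper must address.
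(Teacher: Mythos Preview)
Your proposal is correct and is exactly the intended argument: the paper does not give its own proof here but simply cites the result as Corollary~1.5 of \cite{m2}, where it is the immediate consequence of Theorem~\ref{c5} via the implication \ref{i5}$\Rightarrow$\ref{i3} that you spell out.
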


Going back to Navarro's question we can now ask: do there exist $p$-vanishing
partitions which are not of $p$-adic type?

For $p=2$ and $p=3$ the answer to the above question is yes, even if
$p$-vanishing partitions are quite close to being of $p$-adic type (they can
only differ from partitions of $p$-adic type on their small parts), as can be
seen in Theorem \ref{t1}. For $p\geq 5$ the author's conjecture is that there do
not exist $p$-vanishing partitions which are not of $p$-adic type (Conjecture
\ref{c3}).

\begin{theor}\label{t1}
Assume that $p=2$ and $r=3$ or that $p=3$ and $r=2$. Also assume that $n\geq 0$. Then a partition $(c_1,\ldots,c_h)$ of $n$ is $p$-vanishing if and only if we can
find $0\leq i\leq h$ such that $(c_1,\ldots,c_i)\vdash d_rp^r$ is of $p$-adic
type and $(c_{i+1},\ldots,c_h)\vdash e_r$ is $p$-vanishing.
\end{theor}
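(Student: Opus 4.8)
The plan is to organise everything around the Murnaghan--Nakayama (MN) rule relative to the $p^r$-core and $p^r$-quotient. Write $\ell=p^r$ and, for $\alpha\vdash n$, let $\kappa$ be the $\ell$-core of $\alpha$, let $(\alpha^{(0)},\ldots,\alpha^{(\ell-1)})$ be its $\ell$-quotient, and let $w=\sum_j|\alpha^{(j)}|$ be the $\ell$-weight. Since deleting a rim hook of size divisible by $\ell$ amounts to deleting a box from the quotient, and since every partition of size $<\ell$ is an $\ell$-core, the identity $e_r=|\kappa|+\ell(w-d_r)$ forces $w\leq d_r$, with equality exactly when $|\kappa|=e_r$. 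Consequently, whenever the parts of $\beta=(c_1,\ldots,c_h)$ that are $\geq\ell$ are all divisible by $\ell$ (the \emph{large} parts, with quotient-sizes forming $\mu'\vdash d_r$, the rest being the \emph{small} part $\nu\vdash e_r$), removing the large parts first and applying MN gives the factorisation
\[\chi^\alpha_\beta=\varepsilon\,C\,\chi^\kappa_\nu,\qquad C=\chi^{(\alpha^{(0)},\ldots,\alpha^{(\ell-1)})}_{\mu'},\quad\varepsilon\in\{\pm1\},\]
when $|\kappa|=e_r$, and $\chi^\alpha_\beta=0$ otherwise (the large parts cannot be removed). I would record that the large parts are of $p$-adic type as a partition of $d_r\ell$ if and only if $\mu'$ is of $p$-adic type as a partition of $d_r$, and, for the converse direction, the hook generating function $\sum_m(\pm1)^m\chi^{(n-m,1^m)}_\beta\,t^m=\prod_j(1-t^{c_j})/(1-t)$ together with Kummer's criterion for $p\mid\binom{n-1}{m}$.

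For the ``if'' direction, assume $\beta$ splits as stated; the large parts are then divisible by $\ell$, so the factorisation applies. Let $\chi^\alpha$ be $p$-singular. If $|\kappa|\neq e_r$ then $\chi^\alpha_\beta=0$, so suppose $|\kappa|=e_r$ (hence $w=d_r$) and, for contradiction, $\chi^\alpha_\beta\neq0$; then $C\neq0$ and $\chi^\kappa_\nu\neq0$. Since $\nu$ is $p$-vanishing in $S_{e_r}$, the latter forces $\chi^\kappa$ to be $p$-regular, so by Theorem~\ref{c5} one can strip hooks of sizes $\lambda_{e_r,p}$ from $\kappa$. On the other hand $C\neq0$ lets one strip $\mu'$ from the quotient, and because $\mu'$ is of $p$-adic type I would deduce that one can strip $\lambda_{d_r,p}$ from the quotient as well; lifting this strips the large part $\lambda_{d_r\ell,p}$ from $\alpha$ down to $\kappa$, after which $\lambda_{e_r,p}$ is removed. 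Thus $\alpha$ admits a full strip of $\lambda_{n,p}$, so $\chi^\alpha$ is $p$-regular by Theorem~\ref{c5}, a contradiction. Hence $\chi^\alpha_\beta=0$ for every $p$-singular $\chi^\alpha$, i.e.\ $\beta$ is $p$-vanishing.

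For the ``only if'' direction I argue contrapositively, exhibiting a $p$-singular $\chi^\alpha$ with $\chi^\alpha_\beta\neq0$ when $\beta$ does not split. First, using $\prod_j(1-t^{c_j})/(1-t)$ modulo $p$ against the Lucas pattern of $\binom{n-1}{m}$, I would show that if some part $\geq\ell$ is not divisible by $\ell$, or if the parts $\geq\ell$ do not sum to $d_r\ell$, then this polynomial has a nonzero coefficient at some $m$ with $p\mid\binom{n-1}{m}$, so the $p$-singular hook character $\chi^{(n-m,1^m)}$ does not vanish on $\beta$. This reduces us to $\beta$ whose large parts are divisible by $\ell$ and sum to $d_r\ell$, where the factorisation is available. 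If the small part $\nu$ is not $p$-vanishing in $S_{e_r}$, choose a $p$-singular $\chi^\kappa$ with $\chi^\kappa_\nu\neq0$ and take $\alpha$ with $\ell$-core $\kappa$ and $\ell$-quotient $((d_r),\emptyset,\ldots,\emptyset)$: then $C=1$, while the $p$-regular quotient together with the $p$-singular $\kappa$ makes every strip of $\lambda_{n,p}$ stall at $\kappa$, so $\chi^\alpha$ is $p$-singular and $\chi^\alpha_\beta=\pm\chi^\kappa_\nu\neq0$. If instead $\nu$ is $p$-vanishing but the large part is not of $p$-adic type, then $\mu'$ is not of $p$-adic type; here I fix a $p$-regular $\kappa\vdash e_r$ with $\chi^\kappa_\nu\neq0$ (which exists by column orthogonality, as the $p$-singular characters all vanish on $\nu$) and a $p$-singular multipartition quotient with $C=\chi^{(\alpha^{(\bullet)})}_{\mu'}\neq0$, yielding a $p$-singular $\chi^\alpha$ with $\chi^\alpha_\beta=\pm C\,\chi^\kappa_\nu\neq0$.

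The main obstacle is the multipartition analogue of Theorem~\ref{c5} underlying both directions: that a multipartition of $d_r$ can be stripped by one $p$-adic type partition if and only if it can be stripped by $\lambda_{d_r,p}$, and that a non-$p$-adic-type class $\mu'$ is detected by some $p$-singular multipartition character. I expect to obtain the first statement from the single-partition case by working component-wise and tracking how the $p$-adic digits of $d_r=\sum_j|\alpha^{(j)}|$ distribute, using additivity of the $p$-adic valuation of a product of degrees; the existence statement I would establish by induction on $n$ (legitimate since $d_r<n$), spreading the quotient across components, which cheaply destroys $\lambda_{d_r,p}$-strippability while preserving non-vanishing on $\mu'$. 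The remaining delicate point is the hook-character step, namely reading off from the mod-$p$ reduction of $\prod_j(1-t^{c_j})/(1-t)$ a surviving coefficient certifying a non-$\ell$-divisible large part or a wrong partial sum. Because $\ell=p^r\in\{8,9\}$ is small, $e_r<\ell$ is small and the base of the recursion—classifying the $p$-vanishing partitions of $e_r$—is a finite check, so the content of the theorem is exactly the reduction carried by the factorisation above.
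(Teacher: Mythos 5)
Your ``if'' direction is sound, and it closely parallels what the paper does via Theorem \ref{t20}: your factorisation $\chi^\alpha_\beta=\pm\,\chi^{\alpha^{(p^r)}}_{\mu'}\chi^{\alpha_{(p^r)}}_{\nu}$ is Lemma \ref{l3}, the vanishing when $w_{p^r}(\alpha)<d_r$ follows from Corollary \ref{c1}, refining a stripped $p$-adic type sequence into $\lambda_{d_r,p}$ is Lemma \ref{l1} applied componentwise, and Theorem \ref{c5} closes the argument.

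The ``only if'' direction, however, breaks at its first and most important step: the claim that $p$-singular \emph{hook} characters detect every $\beta$ whose parts $\geq p^r$ are not all divisible by $p^r$, or do not sum to $d_rp^r$. This is false, and the failure is structural rather than a ``remaining delicate point''. Take $p=2$, $n=8$ (so $r=3$, $d_3=1$, $e_3=0$): then $n-1=7$ has all binary digits equal to $1$, so by Lucas every $\binom{7}{m}$ is odd, i.e.\ \emph{no} hook character of $S_8$ is $2$-singular and your test is vacuous. Yet the theorem asserts that $(8)$ is the only $2$-vanishing partition of $8$, so partitions such as $(4,4)$, $(6,2)$, $(4,2,2)$ --- whose parts $\geq 8$ sum to $0\neq 8$ --- must all be excluded, and this genuinely requires non-hook singular characters: for instance $\chi^{(3,3,2)}$ has even degree $42$ and $\chi^{(3,3,2)}_{(4,4)}=2$. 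The same total collapse occurs whenever $n$ is a power of $p$ (e.g.\ $p=3$, $n=9$), which is precisely the first case in which the theorem says something beyond the finite table. Supplying what your reduction was supposed to supply --- the two inequalities $\sum_{c_i\geq p^t}c_i\leq d_tp^t$ and $\sum_{c_i\geq p^t}c_i\geq d_tp^t$, and then divisibility of the large parts --- is exactly the content of Theorems \ref{t17}, \ref{t19} (with Theorem \ref{t13}) and \ref{t15}, whose proofs occupy Sections \ref{s2}, \ref{s3}, \ref{s4} and \ref{s1} and rest on carefully constructed non-hook characters such as $(n-c_l,2,1^{c_l-2})$, $(n-c_l,e_t,1^{c_l-e_t})$, $(d_tp^t+1,2^{m-d_tp^t},1^{n-2m+d_tp^t-1})$, $(n-5,3,2)$ and $(n-6,3,3)$. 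In other words, the step you compress into one sentence is the hard part of the whole paper.

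There is a second, smaller gap. In your final case ($\nu$ $p$-vanishing but $\mu'$ not of $p$-adic type) you need a multipartition $Q$ of $d_r$ that is not $\lambda_{d_r,p}$-strippable yet satisfies $\chi^{Q}_{\mu'}\neq 0$. Since here $p\in\{2,3\}$, it can happen that $\mu'$ is $p$-vanishing in $S_{d_r}$ without being of $p$-adic type (e.g.\ $\mu'=(4,1)\vdash 5$ for $p=3$, which is $3$-vanishing by the paper's table); then no $Q$ concentrated in a single component can work, so the ``multipartition analogue of Theorem \ref{c5}'' you invoke is a genuine additional theorem, not a routine componentwise check. The paper never needs such a statement, because Theorems \ref{t15}, \ref{t17} and \ref{t19} yield $\sum_{p^t\mid c_i}c_i=d_tp^t$ at \emph{every} level $t\geq r$, and this multi-level condition forces the large part to be of $p$-adic type outright (Equations \eqref{eq7} and \eqref{eq8}), after which Theorem \ref{t20} finishes the proof.
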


Notice that whenever $(c_1,\ldots,c_i)\vdash d_rp^r$ is of $p$-adic type with
$c_i>0$ and $(c_{i+1},\ldots,c_h)\vdash e_r$ then $(c_1,\ldots,c_h)$ is a
partition of $n$, since $c_i\geq p^r>c_{i+1}$.

It is easy, for $n<8$ if $p=2$ or for $n<9$ if $p=3$, to find which partitions
of $n$ are $p$-vanishing, as this can be done by simply looking at the character
table of $S_n$. For completeness we write such partitions in the following
table, where partitions not of $p$-adic type are in bold.

\vspace{12pt}
\noindent
\begin{tabular}{|l|l|}
\hline
$p$&$p$-vanishing partitions\\
\hline
2&(0),(1),(2),\textbf{(1,1)},(2,1),(4),\textbf{(2,1,1)},(4,1),(4,2),\textbf{(4,1,1)},(4,2,1)\\
\hline
3&(0),(1),(2),(1,1),(3),\textbf{(2,1)},\textbf{(1,1,1)},(3,1),(3,2),(3,1,1),\textbf{(4,1)},\textbf{(2,1,1,1)},\\
&(6),(3,3),\textbf{(3,2,1)},\textbf{(3,1,1,1)},(6,1),(3,3,1),(6,2),(6,1,1),(3,3,2),(3,3,1,1),\\
&\textbf{(4,3,1)},\textbf{(3,2,1,1,1)}\\
\hline
\end{tabular}

\vspace{12pt}

For a prime $p\geq 5$ we have the following conjectures.

\begin{conj}\label{c3}
If $p\geq 5$, then a partition is $p$-vanishing if and only if it is of $p$-adic
type.
\end{conj}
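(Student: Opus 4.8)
Since partitions of $p$-adic type are $p$-vanishing by Corollary \ref{t12}, only the converse needs proof: for $p\geq 5$, a $p$-vanishing partition must be of $p$-adic type. I would argue the contrapositive, producing for each $\beta\vdash n$ that is not of $p$-adic type a partition $\alpha\vdash n$ with $\chi^\alpha$ $p$-singular and $\chi^\alpha_\beta\neq 0$. The two inputs I would use repeatedly are the equivalence of \ref{i5} and \ref{i4} in Theorem \ref{c5}, which certifies $p$-singularity of a candidate $\alpha$ by the purely combinatorial test that no sequence of hooks of lengths $\lambda_{n,p}$ can be stripped from it, and the Murnaghan--Nakayama rule, which evaluates $\chi^\alpha_\beta$ as a signed count of border-strip tableaux of shape $\alpha$ and content $\beta$.

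The cleanest source of witnesses is the family of hooks. Writing the values of the characters $\chi^{(n-m,1^m)}$ as the coefficients of a single generating function,
\[
\sum_{m=0}^{n-1}\chi^{(n-m,1^m)}_\beta\,t^m=\frac{1}{1+t}\prod_{j=1}^{h}\bigl(1-(-t)^{\beta_j}\bigr),
\]
and noting that $\chi^{(n-m,1^m)}(1)=\binom{n-1}{m}$, the hook $\chi^{(n-m,1^m)}$ is $p$-singular exactly when $p\mid\binom{n-1}{m}$, a condition governed by the base-$p$ digits of $n-1$ through Kummer's theorem. Thus a clean necessary condition for $\beta$ to be $p$-vanishing is that the coefficient of $t^m$ above vanish for every $m$ with $p\mid\binom{n-1}{m}$. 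The first main step is to show that, for $p\geq 5$, this condition already forces $\beta$ to be of $p$-adic type in a wide range of cases; the support of $\prod_j(1-(-t)^{\beta_j})$ is controlled by the signed subset sums of the multiset of parts, and I would match these against the carry pattern of $\binom{n-1}{m}$ to locate a surviving nonzero coefficient whenever the parts of $\beta$ fail the $p$-adic balance $\sum_{v_p(\beta_j)=i}\beta_j=a_ip^i$.

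When hooks alone do not detect the failure, I would enlarge the supply of witnesses and, in parallel, cut the problem down to bounded part sizes. For the reduction I would peel the large parts exactly as in the proof of Theorem \ref{t1}: in a Murnaghan--Nakayama evaluation one first removes the long border strips whose lengths are the large parts of $\beta$, and Theorem \ref{c5} then pins down both the residual shape and its $p$-singularity, leaving a partition all of whose parts are small. For the detection step in the bounded case I would look for an $\alpha$ admitting a unique border-strip tableau of content $\beta$, forcing $\chi^\alpha_\beta=\pm 1\neq 0$; the model example is $\beta=(p-1,1)\vdash p$, for which the non-hook two-row partition $\alpha=(p-2,2)$ is $p$-singular and has $\chi^\alpha_\beta=-1$. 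Two-row partitions $(n-m,m)$, whose values are differences of hook values and whose degrees $\binom{n}{m}-\binom{n}{m-1}$ supply an independent family of $p$-divisibility constraints, together with single-strip extensions of witnesses for smaller partitions, should close the remaining gaps.

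The main obstacle is precisely this bounded base case, and it is where the hypothesis $p\geq 5$ is indispensable. For $p=2$ and $p=3$ the statement is false, the exceptions being exactly the boldface partitions of the table, and the reason is structural: for small $n$ and small $p$ every partition of $n$ is a hook, so every character has $p'$-degree, no character is $p$-singular, and the class vanishes vacuously. The crux of the proof is therefore to guarantee, uniformly over all bounded non-$p$-adic $\beta$, a single $\alpha$ that is simultaneously $p$-singular (verified through \ref{i4}) and has a non-cancelling Murnaghan--Nakayama sum on $\beta$. Making this ``enough room'' quantitative for $p\geq 5$ -- most plausibly by analysing the coefficients of the displayed generating function modulo the vanishing forced by $p\mid\binom{n-1}{m}$, and by the unique-tableau constructions above -- is the decisive point on which the conjecture rests.
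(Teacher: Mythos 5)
The statement you were asked to prove is Conjecture \ref{c3}: the paper does not prove it, and within the paper it remains open. What the paper actually establishes is one direction (partitions of $p$-adic type are $p$-vanishing, Corollary \ref{t12}), partial structural results for $p\geq 5$ (Theorems \ref{t15} and \ref{t17}), and Theorem \ref{t3}, which reduces Conjecture \ref{c3} to the seemingly weaker Conjecture \ref{c4} about small parts. Your proposal, by its own final sentence, is likewise not a proof: every load-bearing step is conditional (``I would match\ldots'', ``should close the remaining gaps''), and the ``decisive point'' is explicitly left unresolved. So there is a genuine gap, and it sits exactly where the paper's open problem sits.

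Concretely, two of your steps assume what is missing. First, the hook analysis: it is true that $\chi^{(n-m,1^m)}$ is $p$-singular precisely when $p\mid\binom{n-1}{m}$, and your generating function identity is correct, but you give no argument that vanishing of the relevant coefficients forces the balance condition $\sum_{j:v_p(\beta_j)=i}\beta_j=a_ip^i$; the paper's partial results already indicate that hooks are far from sufficient, since its witnesses are shapes such as $(n-c_l,2,1^{c_l-2})$, $(n-c_l,e_t,1^{c_l-e_t})$ and $(d_tp^t+1,2^{m-d_tp^t},1^{n-2m+d_tp^t-1})$ (Sections \ref{s1} and \ref{s2}), chosen precisely because hook characters do not detect the failures in question. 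Second, your reduction ``peel the large parts exactly as in the proof of Theorem \ref{t1}'' requires the equality $\sum_{c_i\geq p^t}c_i=d_tp^t$, which rests on both an upper bound (Theorem \ref{t17}, available for $p\geq 5$) and a lower bound (Theorem \ref{t19}, proved only for $p=2$ and $p=3$); for $p\geq 5$ the lower bound is, via Lemma \ref{l23}, essentially the content of Conjecture \ref{c4}, hence equivalent by Theorem \ref{t3} to the very statement you are trying to prove, so the reduction is circular as it stands. (Your structural explanation of why $p=2,3$ fail is also inaccurate: the exceptions in Theorem \ref{t1} occur for all $n$, arising from the small parts summing to $e_r$, not only for small $n$ where vanishing is vacuous.) What your write-up does contribute correctly are two genuine ingredients: the generating function for hook character values and the witness $\alpha=(p-2,2)$ against $\beta=(p-1,1)$; but assembling such ingredients into a complete argument is precisely the open problem the paper leaves as a conjecture.
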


\begin{conj}\label{c4}
Let $p\geq 5$. Then for every $n$ and any $p$-vanishing partition
$(c_1,\ldots,c_h)$ of $n$ we have that
\[\sum_{i:c_i<a_0}c_i\leq a_0.\]
\end{conj}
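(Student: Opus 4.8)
The plan is to prove the contrapositive: assuming $\sum_{i:c_i<a_0}c_i>a_0$, I would exhibit a $p$-singular irreducible character $\chi^\alpha$ with $\chi^\alpha_\beta\neq0$, so that $\beta=(c_1,\ldots,c_h)$ is not $p$-vanishing. The cases $a_0\leq1$ are vacuous (there are no positive parts $<a_0$), so I may assume $a_0\geq2$, and then every small part lies in $\{1,\ldots,a_0-1\}$. Throughout, $p$-singularity of a candidate $\chi^\alpha$ is checked through the equivalence of \ref{i5} and \ref{i4} in Theorem \ref{c5} (one cannot remove from $\alpha$ a sequence of hooks of lengths the parts of $\lambda_{n,p}$), while $\chi^\alpha_\beta$ is computed by the Murnaghan--Nakayama rule, peeling off rim hooks of sizes $c_1,\ldots,c_h$.

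The first family I would test is the hooks $\alpha=(n-m,1^m)$, whose degree is $\binom{n-1}{m}$ and whose values are encoded by the classical generating function
\[\sum_{m=0}^{n-1}\chi^{(n-m,1^m)}_\beta t^m=\frac{1}{1+t}\prod_{j=1}^h\left(1-(-t)^{c_j}\right).\]
By Lucas' theorem, if $a_0\geq1$ then every $m$ with $0\leq m\leq n-1$ and $m\equiv a_0\pmod p$ yields a $p$-singular hook, so it would suffice to find such an $m$ with nonzero coefficient above; this settles a range of configurations (for instance $\beta=(4,1,1,1)$ with $p=5$, where the $5$-singular hook $(5,1,1)$ has value $1$). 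Hooks do not suffice in general, however: for $p=5$ and $\beta=(3,2,2,2)$ (here $a_0=4$ and $\sum_{c_i<a_0}c_i=9>4$) every $5$-singular hook vanishes on $\beta$, yet $\beta$ is not $5$-vanishing, as witnessed by the non-hook character $\chi^{(4,3,1,1)}$, which is $5$-singular and satisfies $\chi^{(4,3,1,1)}_{(3,2,2,2)}=3$. So the construction must draw on a broader family.

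The general plan is an induction on $n$ in the spirit of Theorem \ref{t1}. First I would peel off, via the Murnaghan--Nakayama rule, all parts of $\beta$ that are divisible by $p$ or are $\geq a_0$, absorbing them into a ``generic'' component of $\alpha$ assembled from hooks of $p$-power lengths, and reduce to a core partition whose parts all lie in $\{1,\ldots,a_0-1\}$ and still violate the bound; this requires carefully tracking how the $p$-adic digits of $n$ (and hence $a_0$ itself) change under each removal. For the reduced case I would construct $\alpha$ explicitly --- a shape with two long rows and a short leg, modelled on $(4,3,1,1)$ above --- so that the signed count produced by removing the small rim hooks is provably nonzero, while Theorem \ref{c5}\ref{i4} guarantees that the $\lambda_{n,p}$-sequence cannot be removed from $\alpha$. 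The hypothesis $p\geq5$ must enter decisively here: for $p=3$ the partition $(3,2,1,1,1)$ is $3$-vanishing even though $\sum_{c_i<a_0}c_i=3>2=a_0$, so any correct argument must use the extra room afforded by $a_0\leq p-1$ to stop the relevant signs from cancelling.

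The main obstacle is precisely this sign control. Producing one $p$-singular $\alpha$ for a fixed $\beta$ is routine, but giving a construction that is uniform over all $n$ and all violating $\beta$, together with a proof that the Murnaghan--Nakayama signed sum never vanishes, is the hard part --- and it is exactly the step that fails for $p=2,3$. I expect the cleanest route is to isolate the contribution of the small parts into an explicit polynomial (as in the hook generating function) and to establish its non-cancellation through an abacus/beta-set bijection that makes the leg-length signs transparent, exploiting $a_0\leq p-1$ to rule out the numerical coincidences responsible for the small-prime counterexamples.
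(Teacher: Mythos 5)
The statement you set out to prove is Conjecture \ref{c4}, which the paper itself does not prove and does not claim to prove: its only result about this statement is Theorem \ref{t3}, which shows Conjecture \ref{c4} is \emph{equivalent} to Conjecture \ref{c3} (using Lemma \ref{l23}, Theorems \ref{t15} and \ref{t17}, and the core--quotient factorization of Lemma \ref{l3}). So there is no proof in the paper to compare against, and your text is not a proof either --- it is a program. The two steps that would constitute the actual mathematical content, namely a construction of a $p$-singular witness $\alpha$ that works uniformly for every violating partition $(c_1,\ldots,c_h)$, and a proof that the resulting Murnaghan--Nakayama signed sum never vanishes, are exactly the steps you defer (``the hard part''). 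In addition, the reduction you sketch (peeling off all parts divisible by $p$ or of size at least $a_0$ and inducting on $n$) is not set up correctly: removing parts changes $n$, hence all the digits $a_i$ and in particular $a_0$ itself, and you supply no lemma controlling this; the analogous reductions in the paper (Theorem \ref{t20}, and the proof of Theorem \ref{t3}) only work because equalities like $\sum_{p^t\mid c_i}c_i=d_tp^t$ are first established by the hard Theorems \ref{t15}, \ref{t17}, \ref{t19} --- and Theorem \ref{t19}, the analogue of what you need, is proved there only for $p=2,3$.

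There is also a concrete error in the example meant to calibrate your general construction. For $p=5$, $n=9$, $\beta=(3,2,2,2)$ you propose $\chi^{(4,3,1,1)}$ as a $5$-singular witness. It is not $5$-singular: the hook lengths of $(4,3,1,1)$ are $7,4,3,1,5,2,1,2,1$, so it has a $5$-hook ($h_{2,1}=5$), and the hook length formula gives $\deg\chi^{(4,3,1,1)}=9!/1680=216$, which is prime to $5$. (By Theorem \ref{c5}, a character of $S_9$ is $5$-singular precisely when its partition is a $5$-core, since after removing one $5$-hook the remaining parts of $\lambda_{9,5}=(5,1,1,1,1)$ are all $1$'s.) Your conclusion that $(3,2,2,2)$ is not $5$-vanishing is nevertheless correct, consistent with Conjecture \ref{c3}: take instead the $5$-core $(6,2,1)$; the Murnaghan--Nakayama rule gives
\[\chi^{(6,2,1)}_{(3,2,2,2)}=\chi^{(3,2,1)}_{(2,2,2)}-\chi^{(6)}_{(2,2,2)}=0-1=-1\neq 0,\]
the first term vanishing because $(3,2,1)$ has no $2$-hook. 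This slip illustrates why the ``sign control'' step cannot be waved at: deciding which characters are $p$-singular and which terms survive the signed sum is the entire content of the conjecture, and it is precisely what remains open both in your proposal and in the paper.
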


Even if the second conjecture seems weaker than the first one, they turn out to be equivalent.

\begin{theor}\label{t3}
Conjectures \ref{c3} and \ref{c4} are equivalent.
\end{theor}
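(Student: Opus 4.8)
The plan is to prove Theorem~\ref{t3}, the equivalence of Conjectures~\ref{c3} and~\ref{c4}. One direction is immediate: if Conjecture~\ref{c3} holds, then every $p$-vanishing partition is of $p$-adic type, and for such a partition the parts strictly below $a_0=e_1$ are exactly the parts not divisible by $p$, whose total equals $a_0$ by the defining property of $p$-adic type (taking $i=0$ in the characterization $\sum_{j:p^i\mid\alpha_j,\,p^{i+1}\centernot\mid\alpha_j}\alpha_j=a_ip^i$). Hence $\sum_{i:c_i<a_0}c_i\leq a_0$, so Conjecture~\ref{c4} follows. The substance of the theorem is the reverse implication, for which I would argue by induction on $n$ and assume Conjecture~\ref{c4} in order to deduce that a given $p$-vanishing partition $\alpha=(c_1,\ldots,c_h)$ is forced to be of $p$-adic type.

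For the reverse direction I would fix a $p$-vanishing partition $\alpha$ and peel off its parts by size relative to the $p$-adic digits of $n$. The key idea is to show that $\alpha$ must contain a ``top block'' consisting of parts equal to $p^k$ (indeed exactly $a_k$ of them, the largest $p$-adic digit), and that removing this block reduces the problem to a $p$-vanishing partition of $n-a_kp^k=d_{k}p^k - a_kp^k + e_k$ of the same type, letting the induction run. To extract this structure I would use the hook-removal criterion, namely the equivalence of \ref{i5} and \ref{i4} in Theorem~\ref{c5}: $\alpha$ is $p$-vanishing means that for every $p$-singular $\chi^\gamma$ (equivalently, every $\gamma$ from which one cannot strip the hook sequence $\lambda_{n,p}$) we have $\chi^\gamma_\alpha=0$. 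I would feed in carefully chosen characters $\chi^\gamma$ whose values on $\alpha$ are computable via the Murnaghan--Nakayama rule and which detect the presence or absence of parts of $\alpha$ in a prescribed congruence class modulo powers of $p$; the vanishing of these values then constrains which part sizes can occur. The inequality supplied by Conjecture~\ref{c4} is exactly what rules out the ``anomalous'' small parts that would otherwise prevent $\alpha$ from being of $p$-adic type, closing the gap between being $p$-vanishing and being of $p$-adic type.

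More concretely, the engine of the argument is to locate, for each digit level $i$, a test character $\chi^{\gamma_i}$ that is $p$-singular and whose Murnaghan--Nakayama expansion on $\alpha$ forces $\sum_{j:p^i\mid\alpha_j,\,p^{i+1}\centernot\mid\alpha_j}\alpha_j$ to equal $a_ip^i$, which is precisely the criterion for $p$-adic type recorded after Definition~\ref{t12}'s preceding discussion. The subtlety is that such test characters are easy to produce for the high levels $i\geq 1$ (where parts are large and the hook combinatorics is rigid) but become delicate at level $i=0$, where the small parts live; here the naive character choices may fail to vanish, and it is exactly the bound $\sum_{i:c_i<a_0}c_i\leq a_0$ that forces the level-$0$ parts to sum to $a_0$ rather than exceed it. I expect this level-$0$ analysis to be the main obstacle: one must show that assuming the Conjecture~\ref{c4} inequality, together with $p$-vanishing and $p\geq 5$, leaves no room for the small parts to deviate from a partition of $a_0$, so that $\alpha$ is of $p$-adic type and Conjecture~\ref{c3} holds. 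The hypothesis $p\geq 5$ should enter to guarantee enough separation between part sizes so that the hook sequences do not interfere across levels, a phenomenon that genuinely fails for $p=2,3$ as Theorem~\ref{t1} shows.

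The induction is set up so that stripping off parts that are already known to be of $p$-adic type reduces $n$ and $\alpha$ simultaneously while preserving both the $p$-vanishing hypothesis and the validity of the Conjecture~\ref{c4} inequality for the smaller instance; thus the whole argument reduces to the base step together with the single-digit extraction described above. The delicate bookkeeping is ensuring that the Conjecture~\ref{c4} bound for $n$ transfers to a correct bound for the reduced partition of $n-a_kp^k$, since the value $a_0$ and the threshold $c_i<a_0$ are defined in terms of the original $n$; I would verify that the lowest digit $a_0$ is unchanged under removing only parts divisible by $p^k$, so the level-$0$ constraint is inherited verbatim, and the induction closes.
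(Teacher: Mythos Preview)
Your proposal has a genuine gap: it does not identify the mechanism by which the single inequality of Conjecture~\ref{c4} (which concerns only parts below $a_0$) propagates to constrain $(c_1,\ldots,c_h)$ at \emph{every} $p$-adic level. The paper's proof does not proceed by induction on $n$ peeling off a top block, nor by producing ad hoc test characters $\gamma_i$ level by level. Instead it argues by induction on $t$ that $\sum_{c_i\geq p^t}c_i=d_tp^t$ for all $t$. Assuming this at level $t-1$, Theorems~\ref{t15} and~\ref{t17} force the parts $\geq p^{t-1}$ to be $(p^{t-1}f_1,\ldots,p^{t-1}f_l)$ with $(f_1,\ldots,f_l)\vdash d_{t-1}$; one then shows, via Lemma~\ref{l3} applied to partitions $\alpha$ with $p^{t-1}$-core $(e_{t-1})$ and $p^{t-1}$-quotient $(\beta,(0),\ldots,(0))$ for arbitrary $p$-singular $\chi^\beta$, that the rescaled partition $(f_1,\ldots,f_l)$ is itself $p$-vanishing as a partition of $d_{t-1}$. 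Now Conjecture~\ref{c4} applies to $(f_1,\ldots,f_l)$, where the least significant $p$-adic digit of $d_{t-1}$ is $a_{t-1}$, giving $\sum_{f_j<a_{t-1}}f_j\leq a_{t-1}$; Lemma~\ref{l23} then converts this into the lower bound $\sum_{c_i\geq p^t}c_i\geq d_tp^t$, completing the induction step. Your sketch invokes Conjecture~\ref{c4} only at level $0$ of the original partition and hopes the higher levels are handled by unspecified test characters; but it is precisely this rescaling---dividing the large parts by $p^{t-1}$ and proving the result is again $p$-vanishing---that lets the conjectured inequality do work at each level, and this idea is absent from your plan.

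There is also a structural inaccuracy that would derail the induction you propose. Your claim that a $p$-vanishing (or even $p$-adic type) partition has a top block of exactly $a_k$ parts each equal to $p^k$ is false: the definition allows the level-$k$ parts to be $f_{k,j}p^k$ with $(f_{k,1},\ldots,f_{k,h_k})\vdash a_k$, so for instance a single part $2p^k$ is permitted when $a_k=2$. Consequently your induction-on-$n$ scheme of removing $a_k$ copies of $p^k$ already fails on genuine $p$-adic type partitions, and the step ``removing this block reduces the problem to a $p$-vanishing partition of $n-a_kp^k$'' presupposes exactly the divisibility structure (Theorem~\ref{t20} requires $\sum_{p^t\mid c_i}c_i=d_tp^t$ for all $t\geq m$) that you are trying to establish.
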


\section{Some definitions and basic lemmas}

In this section we will give some results which will be used later in proving
Theorems \ref{t1} and \ref{t3}. For $\alpha$
is a partition and $r$ is a positive integer we will write $\alpha_{(r)}$ for
the $r$-core of $\alpha$, $\alpha^{(r)}$ for the $r$-quotient of $\alpha$ and
$w_r(\alpha)$ for the $r$-weight of $\alpha$. For definition and basic results about $r$-cores, $r$-quotients and $r$-weights see Section I.3 of \cite{b4}. We will need the following result
about partitions.

\begin{lemma}\label{l1}
Let $\alpha$ and $\beta$ be partitions and $r,s\geq 1$. If $\beta$ is obtained
from $\alpha$ by removing an $rs$-hook then $\beta$ can be obtained from
$\alpha$ by removing $r$ hooks of length $s$.
\end{lemma}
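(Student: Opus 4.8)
The plan is to use the abacus description of partitions and hook removals. Fix an abacus with $s$ runners and represent $\alpha$ by a beta-set, i.e.\ a finite set of beads at distinct non-negative integer positions, where position $x$ lies on runner $x\bmod s$ and in row $\lfloor x/s\rfloor$. Recall (Section I.3 of \cite{b4}) that removing a hook of length $\ell$ corresponds to sliding one bead from an occupied position $x$ to the empty position $x-\ell$. In particular removing a hook of length $s$ means sliding a bead from a row $x$ to the (empty) row $x-1$ of the same runner, while removing the $rs$-hook means sliding a single bead from some row $a$ of some runner $c$ to the empty row $a-r$ of that same runner. Hence the beta-sets of $\alpha$ and $\beta$ coincide on every runner except $c$, where exactly one bead has moved from row $a$ to row $a-r$.

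This reduces the statement to a one-dimensional problem on runner $c$. Let $S$ be the set of occupied rows of $\alpha$ on runner $c$ and let $T=(S\setminus\{a\})\cup\{a-r\}$ be that of $\beta$, so that $a\in S$, $a-r\notin S$ and $a-r\geq 0$. A single $s$-hook removal acting inside runner $c$ replaces an occupied row $x$ (with $x-1$ empty) by $x-1$ and leaves every other runner untouched; its only validity requirement, that the target position be empty, depends only on runner $c$. It therefore suffices to show that $T$ can be reached from $S$ by $r$ such single-row moves.

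For the combinatorial core, list $S=\{s_1<\dots<s_m\}$ and $T=\{t_1<\dots<t_m\}$. Writing $a=s_k$ and locating where $a-r$ is inserted when $a$ is deleted, a short check gives the dominance $t_i\leq s_i$ for every $i$, together with $\sum_i(s_i-t_i)=r$. I would then prove, by induction on $\sum_i(s_i-t_i)$, that whenever $t_i\leq s_i$ for all $i$ one can pass from $S$ to $T$ by single-row moves: choosing the largest index $i$ with $s_i>t_i$ and sliding up the lowest bead of the maximal block of consecutive occupied rows ending at $s_i$ yields a valid move (its target row is empty), which, using the strict monotonicity of the $t_j$, still satisfies $t_j\leq s_j$ afterwards and decreases $\sum_i(s_i-t_i)$ by exactly one. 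After $r$ moves the configuration on runner $c$ is $T$, i.e.\ we have reached $\beta$, having removed $r$ hooks of length $s$.

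The step I expect to be the main obstacle is precisely that one cannot in general slide the single bead at row $a$ straight up to row $a-r$, since the intermediate rows $a-1,\dots,a-r+1$ on runner $c$ may themselves be occupied. The dominance inequalities $t_i\leq s_i$ and the rule of always moving the bottom bead of the top consecutive block are what circumvent this; verifying that this move is always available and preserves the dominance is the only delicate point, and the argument above reduces it to the strict increase of the $t_i$.
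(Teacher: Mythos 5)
Your proof is correct. The paper gives no argument of its own for this lemma --- it simply cites Theorem 3.3 and Proposition 3.6 of \cite{b4} --- and the $s$-runner abacus argument you reconstruct (removing an $rs$-hook is a bead move of $r$ rows on a single runner, to be decomposed into $r$ one-row moves, each of which is an $s$-hook removal) is exactly the approach of that reference; your dominance-induction bookkeeping is a slightly heavier packaging of the usual telescoping argument, in which one slides each bead occupying the gap up in turn (first the lowest occupied row above $a-r$ down to $a-r$, then the next down to the row just vacated, and so on, for a total of exactly $r$ one-row moves into empty rows), but it is valid, including the delicate points you flag about availability of the move and preservation of dominance.
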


See Theorem 3.3 and Proposition 3.6 of \cite{b4}. In particular the following
holds.

\begin{cor}\label{c1}
Let $\alpha$ be a partition and $r,s$ be positive integers. If $\beta$ is
obtained from $\alpha$ by removing an $rs$-hook, then we have that
$w_r(\beta)=w_r(\alpha)-s$.
\end{cor}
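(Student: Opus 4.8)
The plan is to reduce the statement to the characterization of the $r$-weight in terms of the $r$-core. Recall the standard facts from Section I.3 of \cite{b4}: for any partition $\gamma$ one has $|\gamma| = |\gamma_{(r)}| + r\, w_r(\gamma)$, and successively removing $r$-hooks from a partition leaves its $r$-core unchanged. Thus $w_r(\gamma) = (|\gamma| - |\gamma_{(r)}|)/r$, and everything will follow once we control the size and the $r$-core of $\beta$.

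First I would apply Lemma \ref{l1} with the roles of $r$ and $s$ interchanged. Since $rs = sr$, an $rs$-hook is also an $sr$-hook, so the lemma tells us that $\beta$ can be obtained from $\alpha$ by removing $s$ hooks of length $r$, that is, by removing $s$ successive $r$-hooks. This interchange is the key observation, and the rest is routine bookkeeping with sizes and cores.

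Next, because each of these $s$ removals is the removal of an $r$-hook, and removing $r$-hooks does not alter the $r$-core, we conclude that $\beta_{(r)} = \alpha_{(r)}$. On the other hand, removing an $rs$-hook decreases the size by exactly $rs$, so $|\beta| = |\alpha| - rs$. Combining these two facts through the size formula gives
\[
w_r(\beta) = \frac{|\beta| - |\beta_{(r)}|}{r} = \frac{(|\alpha| - rs) - |\alpha_{(r)}|}{r} = \frac{|\alpha| - |\alpha_{(r)}|}{r} - s = w_r(\alpha) - s,
\]
which is the claimed identity.

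The only delicate point is the interchange of $r$ and $s$ when invoking Lemma \ref{l1}: the lemma as stated decomposes an $rs$-hook into $r$ hooks of length $s$, whereas here we need the complementary decomposition into $s$ hooks of length $r$, which is legitimate precisely because $rs = sr$. Once this is noted, neither the size computation nor the invariance of the $r$-core under $r$-hook removal presents any difficulty, so I expect no further obstacle.
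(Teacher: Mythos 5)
Your proof is correct and follows the same route the paper intends: Corollary \ref{c1} is stated as an immediate consequence of Lemma \ref{l1}, read with the roles of $r$ and $s$ interchanged so that the $rs$-hook removal becomes $s$ successive $r$-hook removals, each of which preserves the $r$-core and hence lowers the $r$-weight by exactly one. Your explicit bookkeeping via $|\gamma| = |\gamma_{(r)}| + r\,w_r(\gamma)$ just spells out what the paper leaves implicit.
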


\begin{defi}\label{d1}
Let $\alpha$ be a partition of $n$. For $i\geq 0$ define
\[b_i(\alpha):=w_{p^i}(\alpha)-pw_{p^{i+1}}(\alpha).\]
\end{defi}

By Corollary \ref{c1} we have that $b_i(\alpha)=w_{p^i}(\alpha_{(p^{i+1})})$, in
particular $b_i(\alpha)\geq 0$ and the following lemma holds.

\begin{lemma}\label{l'9}
If $b_i(\alpha)$ are as in the Definition \ref{d1}, then we have that, for $j\geq 0$,
\[w_{p^j}(\alpha)=\sum_{i\geq j}p^{i-j}b_i(\alpha).\]
\end{lemma}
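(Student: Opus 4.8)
The plan is to recognize the right-hand side as a telescoping sum. First I would substitute the defining relation $b_i(\alpha)=w_{p^i}(\alpha)-p\,w_{p^{i+1}}(\alpha)$ from Definition \ref{d1} into each term, so that
\[p^{i-j}b_i(\alpha)=p^{i-j}w_{p^i}(\alpha)-p^{i-j+1}w_{p^{i+1}}(\alpha).\]
Setting $c_i:=p^{i-j}w_{p^i}(\alpha)$, the summand is exactly $c_i-c_{i+1}$, and the series telescopes.

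Before telescoping I would note that the sum is genuinely finite, so no convergence issue arises. Since $\alpha\vdash n$, one cannot remove a hook of length greater than $n$, hence $w_{p^i}(\alpha)=0$ as soon as $p^i>n$. Consequently $b_i(\alpha)=0$ for all sufficiently large $i$, and the terms $c_{i+1}$ in the telescoping vanish for large $i$. The telescoping sum therefore collapses to its first term:
\[\sum_{i\geq j}p^{i-j}b_i(\alpha)=\sum_{i\geq j}(c_i-c_{i+1})=c_j=p^{j-j}w_{p^j}(\alpha)=w_{p^j}(\alpha),\]
which is exactly the claimed identity.

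I do not expect any real obstacle here; the single point deserving a word of justification is the vanishing of the tail, and that follows immediately from the definition of the weight. Conceptually, the lemma just inverts the linear system that defines the $b_i(\alpha)$ in terms of the weights $w_{p^i}(\alpha)$, and the telescoping is precisely this explicit inversion. One could alternatively argue by downward induction on $j$, reading $b_j(\alpha)=w_{p^j}(\alpha)-p\,w_{p^{j+1}}(\alpha)$ and substituting the inductive formula for $w_{p^{j+1}}(\alpha)$, but the telescoping presentation is cleaner and makes the finiteness transparent.
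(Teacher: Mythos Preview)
Your telescoping argument is correct and complete; the only delicate point is the vanishing of the tail, and you handle it properly by observing that $w_{p^i}(\alpha)=0$ once $p^i>n$. The paper itself does not supply a proof but merely cites Proposition~4.5 of \cite{b5}, so there is no in-paper argument to compare against; your direct computation is in fact the standard way to establish the identity and is exactly what one would expect the cited reference to contain.
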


See Proposition 4.5 of \cite{b5}.

\begin{lemma}\label{l14}
We have that $\chi^\alpha$ is not $p$-singular if and only if $b_i(\alpha)=a_i$
for every $i\geq 0$.
\end{lemma}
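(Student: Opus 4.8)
The plan is to translate both sides into statements about weights. By Lemma \ref{l'9} we have $w_{p^j}(\alpha)=\sum_{i\geq j}p^{i-j}b_i(\alpha)$, so if $b_i(\alpha)=a_i$ for all $i$ then $w_{p^j}(\alpha)=\sum_{i\geq j}p^{i-j}a_i=d_j$; conversely, the identities $b_j(\alpha)=w_{p^j}(\alpha)-p\,w_{p^{j+1}}(\alpha)$ (Definition \ref{d1}) and $a_j=d_j-pd_{j+1}$ show that ``$w_{p^j}(\alpha)=d_j$ for all $j\geq 0$'' is equivalent to ``$b_i(\alpha)=a_i$ for all $i\geq 0$''. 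Hence it suffices to prove that $\chi^\alpha$ is \emph{not} $p$-singular if and only if $w_{p^j}(\alpha)=d_j$ for all $j\geq 0$. By the equivalence of \ref{i5} and \ref{i4} in Theorem \ref{c5}, $\chi^\alpha$ is not $p$-singular exactly when one can remove from $\alpha$, in order, $a_k$ hooks of length $p^k$, then $a_{k-1}$ hooks of length $p^{k-1}$, and so on down to $a_0$ hooks of length $1$.

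For the forward direction, assume this removal is possible and fix $j$. Stop the process right after all the prescribed hooks of length $\geq p^j$ have been removed, reaching a partition $\delta_j$. Removing a $p^i$-hook with $i\geq j$ lowers $w_{p^j}$ by $p^{i-j}$ (Corollary \ref{c1}, with $r=p^j$ and $s=p^{i-j}$), and this net change is independent of the order of removal, so $w_{p^j}(\delta_j)=w_{p^j}(\alpha)-\sum_{i\geq j}a_ip^{i-j}=w_{p^j}(\alpha)-d_j$. The number of boxes removed is $\sum_{i\geq j}a_ip^i=n-e_j$, so $|\delta_j|=e_j<p^j$; a partition of size smaller than $p^j$ admits no $p^j$-hook, whence $w_{p^j}(\delta_j)=0$ and therefore $w_{p^j}(\alpha)=d_j$. (When $j>k$ there are no such hooks and $\delta_j=\alpha$ has size $e_j=n<p^j$, giving $w_{p^j}(\alpha)=0=d_j$.)

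For the reverse direction, assume $w_{p^j}(\alpha)=d_j$ for all $j$ and construct the hook sequence by descending induction on the hook length. Let $\gamma_j$ be the partition obtained after removing all the prescribed hooks of length strictly greater than $p^j$; by Corollary \ref{c1} again, $w_{p^j}(\gamma_j)=w_{p^j}(\alpha)-\sum_{i>j}a_ip^{i-j}=d_j-\sum_{i>j}a_ip^{i-j}=a_j$. Since the $p^j$-weight of $\gamma_j$ is exactly $a_j$, we may successively remove $a_j$ hooks of length $p^j$ from $\gamma_j$, arriving at $\gamma_{j-1}$ and iterating down to length $1$. This exhibits a removable sequence of hooks with lengths equal to the parts of $\lambda_{n,p}$, so $\chi^\alpha$ is not $p$-singular.

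The delicate step is the upper bound $w_{p^j}(\alpha)\leq d_j$ in the forward direction. Nonnegativity of weights alone gives only $w_{p^j}(\alpha)\geq d_j$, and a priori the shorter hooks removed after $\delta_j$ could perturb $w_{p^j}$ in either direction, since Corollary \ref{c1} does not apply to them. The size estimate $|\delta_j|=e_j<p^j$ is what resolves this: it forces $w_{p^j}(\delta_j)=0$ directly, bypassing any need to control the effect of the small hooks. This is the crux of the proof; the remaining bookkeeping (order-independence of the net weight change and the boundary cases $j>k$ and $a_j=0$) is routine.
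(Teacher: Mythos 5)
Your proof is correct, but it follows a genuinely different route from the paper: the paper offers no argument for this lemma at all, deferring to Sections 3 and 4 of \cite{m3}, where Macdonald obtains the result by an arithmetic analysis of the $p$-part of $\deg\chi^\alpha$ (hook-length formula, $p$-core towers). You instead make the lemma a consequence of material already stated in the paper. The translation between ``$b_i(\alpha)=a_i$ for all $i$'' and ``$w_{p^j}(\alpha)=d_j$ for all $j$'' via Definition \ref{d1} and Lemma \ref{l'9} is exact, and your two-way argument linking the weight conditions to removability of the hook sequence $\lambda_{n,p}$ (condition \ref{i4} of Theorem \ref{c5}) is sound: Corollary \ref{c1} with $r=p^j$, $s=p^{i-j}$ controls the drop in $p^j$-weight for every removed hook of length $p^i$ with $i\geq j$, and you correctly identify that the resulting inequality $w_{p^j}(\alpha)\geq d_j$ must be closed by the size bound $|\delta_j|=e_j<p^j$, which forces $w_{p^j}(\delta_j)=0$ outright; the reverse direction needs only the standard fact that a partition of $p^j$-weight $a_j$ admits $a_j$ successive $p^j$-hook removals, which is the definition of weight as given in \cite{b4}. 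What your route buys is self-containedness: a purely combinatorial proof inside the paper's own toolkit. The one caveat worth flagging is a possible circularity at the level of the literature rather than of this paper: Theorem \ref{c5} is itself quoted from \cite{m2}, and its equivalence \ref{i5}$\Leftrightarrow$\ref{i4} is very nearly this lemma in disguise (your argument is exactly the bridge between the two statements), so if \cite{m2} proves that equivalence by appealing to Macdonald's theorem, your derivation inverts the logical order in which these results were established; within the present paper, which takes Theorem \ref{c5} as a black box, the argument is perfectly valid.
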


See Sections 3 and 4 of \cite{m3}. The following is an easy corollary to the
previous lemma.

\begin{cor}\label{c7}
If $w_{p^m}(\alpha)\not=d_m$ for some $m\geq 0$, then $\chi^\alpha$ is
$p$-singular.
\end{cor}

\begin{proof}
From Lemma \ref{l'9} we have that in this case
\[\sum_{i\geq m}b_i(\alpha)p^{i-m}=w_{p^m}(\alpha)\not=d_m=\sum_{i\geq
m}a_ip^{i-m},\]
in particular there exists $i$ with $b_i(\alpha)\not=a_i$ and so we can conclude
by Lemma \ref{l14}.
\end{proof}

\begin{cor}\label{l4}
Let $t\geq 0$ and assume that $n=d_tp^t+e_t$ with $d_t\geq 1$ and $0\leq
e_t<p^t$.  Let $\alpha$ be a partition of $n$ with $\alpha_1>\alpha_2\geq 1$ and
such that
\begin{eqnarray*}
h_{1,\alpha_2}^\alpha&>&d_tp^t,\\
h_{1,\alpha_2+1}^\alpha&>&(d_t-1)p^t,\\
h_{1,\alpha_2+1}^\alpha&<&d_tp^t,\\
h_{2,1}^\alpha&<&p^t.
\end{eqnarray*}
Then $\chi^\alpha$ is $p$-singular.
\end{cor}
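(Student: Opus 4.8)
The plan is to prove that $\chi^\alpha$ is $p$-singular by showing $w_{p^t}(\alpha)\neq d_t$ and then quoting Corollary \ref{c7}. I would work entirely on the abacus with $p^t$ runners. Let $h$ denote the number of parts of $\alpha$ and set $\beta_i:=h_{i,1}^\alpha=\alpha_i+h-i$, so that $\{\beta_1>\cdots>\beta_h\}$ is the set of first-column hook lengths, i.e. the beta-numbers on $h$ beads. The hook-length formula gives $h_{1,j}^\alpha=\alpha_1-j+\alpha'_j$, which is strictly decreasing in $j$; since $\alpha'_{\alpha_2+1}=1$ (the second row has length exactly $\alpha_2$) and $\alpha'_{\alpha_2}\geq 2$, this specializes to $h_{1,\alpha_2}^\alpha=\alpha_1-\alpha_2+\alpha'_{\alpha_2}$ and $h_{1,\alpha_2+1}^\alpha=\alpha_1-\alpha_2$. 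I then read the four hypotheses off this abacus (using the standard description of $m$-weights, see \cite{b4}).

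First I would locate the beads. From $h_{2,1}^\alpha=\beta_2<p^t$ and $\beta_2>\beta_3>\cdots>\beta_h\geq 0$ it follows that $\beta_2,\ldots,\beta_h$ all lie in $\{0,\ldots,p^t-1\}$, hence occupy distinct runners at level $0$ on the $p^t$-abacus; only $\beta_1$ can sit higher. Next I would pin down the level of $\beta_1$: on the one hand $\beta_1\geq h_{1,\alpha_2}^\alpha>d_tp^t$, and on the other hand $\beta_1=(\alpha_1-\alpha_2)+(h_{2,1}^\alpha+1)<d_tp^t+p^t$ by the third and fourth hypotheses, so $\lfloor\beta_1/p^t\rfloor=d_t$ and $\beta_1$ lies at level $d_t$ on runner $\rho:=\beta_1-d_tp^t\in\{1,\ldots,p^t-1\}$. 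Because every other bead is already at level $0$, the $p^t$-weight equals the number of upward slides of $\beta_1$ alone: it is $d_t$ if the position $\rho$ is empty and $d_t-1$ if $\rho$ is occupied by some $\beta_j$ with $j\geq 2$.

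The decisive step is therefore to determine whether $\rho$ is occupied, and here I would use the classical correspondence between the gaps below $\beta_1$ and the first-row hook lengths of $\alpha$: a value $v$ with $0<v\leq\beta_1$ is a first-row hook length precisely when $\beta_1-v$ is a gap, so $\rho=\beta_1-d_tp^t$ is occupied if and only if $d_tp^t$ is \emph{not} a first-row hook length. This is exactly what the remaining inequalities guarantee: by strict monotonicity of $j\mapsto h_{1,j}^\alpha$, for $j\leq\alpha_2$ we have $h_{1,j}^\alpha\geq h_{1,\alpha_2}^\alpha>d_tp^t$, while for $j>\alpha_2$ we have $h_{1,j}^\alpha\leq h_{1,\alpha_2+1}^\alpha<d_tp^t$, so no first-row hook length equals $d_tp^t$. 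Hence $\rho$ is occupied, $w_{p^t}(\alpha)=d_t-1\neq d_t$, and Corollary \ref{c7} yields that $\chi^\alpha$ is $p$-singular.

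I expect the only genuinely delicate point to be the gap/hook correspondence and the abacus bookkeeping in the third paragraph; the inequalities themselves reduce to routine manipulations with the hook-length formula. I would also remark that the second hypothesis $h_{1,\alpha_2+1}^\alpha>(d_t-1)p^t$ is in fact redundant: since $\alpha'_{\alpha_2}\leq h\leq p^t+1-\alpha_2\leq p^t$ (the last bounds coming from $h_{2,1}^\alpha<p^t$ and $\alpha_2\geq 1$), one gets $\alpha_1-\alpha_2=h_{1,\alpha_2}^\alpha-\alpha'_{\alpha_2}>d_tp^t-p^t=(d_t-1)p^t$ for free, so the argument above uses only the first, third and fourth inequalities.
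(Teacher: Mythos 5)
Your proof is correct, and it shares the paper's skeleton: both arguments reduce the claim to showing $w_{p^t}(\alpha)=d_t-1\neq d_t$ and then invoke Corollary \ref{c7}. Where you differ is in how that weight is computed. The paper works by explicit recursive hook removal: using $h_{1,\alpha_2+1}^\alpha>(d_t-1)p^t$ and $l_{1,\alpha_2+1}^\alpha=0$ it strips $d_t-1$ hooks of length $p^t$ off the first row in a unique way, and then checks from the remaining inequalities that the resulting partition $(m,\alpha_2,\alpha_3,\ldots)$ admits no further $p^t$-hook. You instead localize everything on the $p^t$-abacus: $h_{2,1}^\alpha<p^t$ pins the beads $\beta_2,\ldots,\beta_h$ at level $0$, the first, third and fourth hypotheses force $\beta_1$ to sit at level $d_t$ on runner $\rho=\beta_1-d_tp^t$, and the gap--hook correspondence plus the fact that no first-row hook length equals $d_tp^t$ shows $\rho$ is occupied, so exactly $d_t-1$ downward slides are possible. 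The two computations are of course equivalent under the standard dictionary between hook removal and bead sliding, but your packaging buys a genuine extra: your argument never touches the second hypothesis, and your closing observation that $h_{1,\alpha_2+1}^\alpha>(d_t-1)p^t$ already follows from the first and fourth hypotheses (via $h_{1,\alpha_2+1}^\alpha=h_{1,\alpha_2}^\alpha-\alpha'_{\alpha_2}$ and $\alpha'_{\alpha_2}\leq h\leq p^t$) is correct, so the paper's hypothesis list is not minimal. By contrast, the paper's route uses that hypothesis essentially, to guarantee that the $d_t-1$ removals can all be performed along the first row; your version makes it visible as a consequence rather than an assumption. The only blemishes are cosmetic: "upward slides" should be slides toward smaller positions, and the claim that the weight is contributed "by $\beta_1$ alone" deserves the one-line justification that levels $1,\ldots,d_t-1$ of runner $\rho$ are empty because every other bead lies below position $p^t$.
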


\begin{proof}
From Corollary \ref{c7} it is enough to show that $w_{p^t}(\alpha)<d_t$.

As $h_{1,\alpha_2+1}^\alpha>(d_t-1)p^t$ and $h_{2,1}^\alpha<p^t$ we can remove
from $\alpha$ a sequence of $(d_t-1)$ hooks of length $p^t$ in a unique way
obtaining $\beta=(m,\alpha_2,\alpha_3,\ldots)$ for some $m>\alpha_2$ (we use
that $l_{1,\alpha_2+1}^\alpha=0$). As
\begin{eqnarray*}
h_{1,\alpha_2}^\beta&=&h_{1,\alpha_2}^\alpha-(d_t-1)p^t>p^t,\\
h_{1,\alpha_2+1}^\beta&=&h_{1,\alpha_2+1}^\alpha-(d_t-1)p^t<p^t,\\
h_{2,1}^\beta&=&h_{2,1}^\alpha<p^t
\end{eqnarray*}
we can not remove from $\beta$ any further hook of length $p^t$. In particular
$w_{p^t}(\alpha)=d_t-1$ and so the corollary follows.
\end{proof}

In particular the following corollary holds.

\begin{cor}\label{l16}
Let $t\geq 0$ and assume that $d_t,e_t\not=0,n$ and $\alpha=(c,1^{n-c})$, with
$e_t\leq n-c<p^t$. Then $\chi^\alpha$ is $p$-singular.
\end{cor}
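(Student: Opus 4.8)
The plan is to derive Corollary \ref{l16} as a direct application of Corollary \ref{l4} to the hook $\alpha=(c,1^{n-c})$, so that the task reduces to verifying the four hook-length inequalities required there. Before doing so I would record some consequences of the hypotheses. Since $e_t\neq 0$ and $e_t<p^t$, we cannot have $t=0$ (as $p^0=1$ would force $e_t=0$); hence $t\geq 1$, and in particular $e_t\geq 1$. Since $d_t\neq 0$ we have $d_t\geq 1$, as is needed to invoke Corollary \ref{l4}. Finally $\alpha_2=1$ and $n-c\geq e_t\geq 1$, so $\alpha$ really has a second part.

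Next I would compute the relevant hook lengths of $\alpha=(c,1^{n-c})$ by the usual arm-plus-leg-plus-one count: $h_{1,1}^\alpha=n$ (the corner hook), $h_{1,2}^\alpha=c-1$ (arm $c-2$, empty leg), and $h_{2,1}^\alpha=n-c$ (leg $n-c-1$, empty arm). As $\alpha_2=1$, the quantities $h_{1,\alpha_2}^\alpha$ and $h_{1,\alpha_2+1}^\alpha$ appearing in Corollary \ref{l4} are precisely $h_{1,1}^\alpha=n$ and $h_{1,2}^\alpha=c-1$.

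It then remains to check the four inequalities with $n=d_tp^t+e_t$. The first, $n>d_tp^t$, holds because $e_t\geq 1$; the fourth, $n-c<p^t$, is exactly the hypothesis. For the middle pair I would convert the given bound $e_t\leq n-c<p^t$ into a two-sided bound on $c$. From $n-c<p^t$ we get $c>n-p^t=(d_t-1)p^t+e_t\geq(d_t-1)p^t+1$, so $c-1>(d_t-1)p^t$, which is the second inequality; from $e_t\leq n-c$ we get $c\leq n-e_t=d_tp^t$, so $c-1<d_tp^t$, the third. The same bound $c>(d_t-1)p^t+e_t\geq 1$ also gives $c\geq 2$, so that $\alpha_1>\alpha_2\geq 1$ as required. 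With all four inequalities verified, Corollary \ref{l4} yields that $\chi^\alpha$ is $p$-singular.

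There is no deep obstacle here: the statement is essentially a repackaging of Corollary \ref{l4} for the special shape of a hook. The only points needing attention are reading off the three hook lengths of the hook correctly and the boundary bookkeeping in the two middle inequalities — using $e_t\geq 1$ to push the strict bound $c>(d_t-1)p^t+e_t$ down to $c-1>(d_t-1)p^t$ — together with the check that $c\geq 2$, so that $\alpha$ indeed has the form demanded by Corollary \ref{l4}.
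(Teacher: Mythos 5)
Your proposal is correct and follows the same route as the paper: both verify the four hook-length inequalities $h_{1,1}^\alpha=n>d_tp^t$, $(d_t-1)p^t<h_{1,2}^\alpha=c-1<d_tp^t$, $h_{2,1}^\alpha=n-c<p^t$ and then invoke Corollary \ref{l4}. Your version is slightly more careful in explicitly checking $\alpha_1>\alpha_2\geq 1$ (via $c\geq 2$), which the paper leaves implicit, but the argument is the same.
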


\begin{proof}
As
\begin{eqnarray*}
h_{1,1}^\alpha&=&n>d_tp^t,\\
h_{1,2}^\alpha&=&c-1<n-e_t=d_tp^t,\\
h_{1,2}^\alpha&=&c-1\geq n-p^t=(d_t-1)p^t+e_t>(d_t-1)p^t,\\
h_{2,1}^\alpha&=&n-c<p^t
\end{eqnarray*}
the corollary follows from Corollary \ref{l4}.
\end{proof}

We will now give an additional equivalent condition for $\chi^\alpha$ to be $p$-singular.

\begin{defi}[Partitions of class $m$]
We say that $\alpha\vdash n$ is of \emph{class $m\geq 0$} if it isn't possible
to recursively remove from $\alpha$ a sequence of hooks with hook-lengths given
by the partition
$\left((p^k)^{a_k},(p^{k-1})^{a_{k-1}},\ldots,(p^m)^{a_m}\right)$.
\end{defi}

\begin{lemma}\label{l22}
Let $\alpha\vdash n$. Then $\chi^\alpha$ is $p$-singular if and only if $\alpha$
is of class $m$ for some $m\geq 0$.
\end{lemma}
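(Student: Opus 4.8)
The plan is to reduce everything to the equivalence of \ref{i5} and \ref{i4} in Theorem \ref{c5}, which already characterizes $p$-singularity of $\chi^\alpha$ by the impossibility of recursively removing from $\alpha$ a sequence of hooks whose lengths are the parts of $\lambda_{n,p}=((p^k)^{a_k},\ldots,1^{a_0})$, taken in decreasing order. Writing these parts out as $(p^k)^{a_k},\ldots,(p^0)^{a_0}$, I observe that the hook-length sequence $((p^k)^{a_k},\ldots,(p^m)^{a_m})$ appearing in the definition of class $m$ is exactly the initial segment obtained by stopping at the power $p^m$; in particular the sequence attached to class $0$ is literally the list of parts of $\lambda_{n,p}$. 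Hence being of class $0$ is precisely condition \ref{i4}, so $\chi^\alpha$ is $p$-singular if and only if $\alpha$ is of class $0$, and it remains only to prove that ``$\alpha$ is of class $m$ for some $m\geq 0$'' is equivalent to ``$\alpha$ is of class $0$''.

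One implication is immediate, since class $0$ is the special case $m=0$. For the converse I would use a monotonicity argument based on the prefix structure just noted. Any recursive removal of the full sequence $((p^k)^{a_k},\ldots,(p^0)^{a_0})$ must begin by removing, in order, its initial segment $((p^k)^{a_k},\ldots,(p^m)^{a_m})$; therefore, if the full sequence can be removed then so can this initial segment. Taking contrapositives, if $\alpha$ is of class $m$ (the initial segment cannot be removed), then the full sequence cannot be removed either, i.e.\ $\alpha$ is of class $0$. Thus ``class $m$ for some $m$'' collapses to ``class $0$'', and combined with the previous paragraph this yields the lemma.

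The only point I expect to require care is the exact reading of ``recursively remove'', since hook removal is not unique in general. I would make explicit that ``it is possible to remove the prescribed sequence'' means that there exists at least one chain of hook removals realizing the given lengths in the prescribed order; with this reading the first steps of any successful removal of the full sequence automatically witness a successful removal of its initial segment, so the contrapositive above is valid without any choice issues. No case analysis on the digits $a_i$ is then needed, beyond the harmless remark that for $m>k$ the prescribed partition is empty and hence trivially removable, so that such $m$ never make $\alpha$ of class $m$ and may be ignored.
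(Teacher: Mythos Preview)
Your proposal is correct and follows essentially the same approach as the paper: reduce to Theorem \ref{c5} by identifying class $0$ with condition \ref{i4}, and then observe that being of class $m$ for some $m$ implies being of class $0$ via the initial-segment argument. The paper's proof is a single sentence making exactly this reduction; your version is simply a more detailed unpacking of it.
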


\begin{proof}
This follows easily from Theorem \ref{c5}, as, if $\alpha$ is of class $m$ for some
$m\geq 0$, then it is also of class 0.
\end{proof}

\section{Proof of Theorems \ref{t1} and \ref{t3}}

We will now classify $p$-vanishing conjugacy classes for $p=2$ and $p=3$,
proving Theorem \ref{t1}, and for $p\geq 5$ prove Theorem \ref{t3}. Some theorems appearing in this section will be proved in
later sections, as their proofs are quite long.

The next theorem states that, if $(c_1,\ldots,c_h)$ is $p$-vanishing and $t\in\N$,
then, under certain conditions, $c_i$ is divisible by $p^t$ whenever $c_i\geq
p^t$.

\begin{theor}\label{t15}
Let $(c_1,\ldots,c_h)\vdash n$ be $p$-vanishing. If $\sum_{c_i\geq
p^t}c_i=d_tp^t$, then $c_i$ is a multiple of $p^t$ whenever $c_i\geq p^t$.
\end{theor}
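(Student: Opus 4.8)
The plan is to argue by contraposition: assuming that some part $c_j\geq p^t$ is \emph{not} divisible by $p^t$, I would exhibit a $p$-singular character $\chi^\alpha$ with $\chi^\alpha_{(c_1,\ldots,c_h)}\neq 0$, contradicting the hypothesis that $(c_1,\ldots,c_h)$ is $p$-vanishing. A first useful observation is that, since the parts that are $\geq p^t$ sum to the multiple $d_tp^t$ of $p^t$, a single non-multiple of $p^t$ among them is impossible; so if the conclusion fails there are at least two offending parts, whose residues mod $p^t$ add up to a multiple of $p^t$. This is exactly what allows one to engineer a controlled \emph{defect} of one unit in the $p^t$-weight. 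One may also assume $d_t\geq 1$ and $0<e_t<p^t$, the cases $d_t=0$ (no part $\geq p^t$, so the statement is vacuous) and $e_t=0$ (no small parts) being either trivial or handled by the same construction with the small parts omitted.

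The character value would be computed through the Murnaghan--Nakayama rule (see \cite{b4}): $\chi^\alpha_{(c_1,\ldots,c_h)}$ is obtained by successively stripping rim hooks of lengths $c_1,\ldots,c_h$ from $\alpha$ and summing with signs. The idea is to choose $\alpha$ so that at every stage the rim hook of the prescribed length can be removed in essentially one way; then no cancellation occurs and the value is $\pm 1\neq 0$. Concretely I would take $\alpha$ with a long first row together with a short remaining part, absorbing the parts $\geq p^t$ into the first row (placing the offending non-multiples of $p^t$ so that their joint removal crosses exactly one multiple-of-$p^t$ boundary) and using the short rows to accommodate the parts $<p^t$, which sum to $e_t$. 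The first-row hook lengths $h^\alpha_{1,\alpha_2}$, $h^\alpha_{1,\alpha_2+1}$ and $h^\alpha_{2,1}$ can then be arranged to satisfy the four inequalities of Corollary \ref{l4} relative to $d_tp^t$ and $p^t$, yielding $w_{p^t}(\alpha)=d_t-1\neq d_t$, so that $\chi^\alpha$ is $p$-singular by Corollary \ref{c7}. Lemma \ref{l22} provides an alternative certificate of singularity, by exhibiting $\alpha$ as a partition of some class $m$.

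The main obstacle is realizing both requirements at once. The ``unique-stripping'' shape needed to keep $\chi^\alpha_{(c_1,\ldots,c_h)}=\pm1$ and the precise first-row hook lengths needed to invoke Corollary \ref{l4} pull the shape of $\alpha$ in opposite directions, and a naive long first row typically creates an alternative rim-hook removal that sends the Murnaghan--Nakayama recursion into a dead end and forces the value to $0$. One must therefore track carefully how the several non-multiples of $p^t$ combine and how the small parts are fitted in without opening up a competing removal. I expect the bulk of the deferred proof to consist precisely of specifying such an $\alpha$ explicitly, verifying by the Murnaghan--Nakayama rule that the prescribed sequence of removals is forced (hence $\chi^\alpha_{(c_1,\ldots,c_h)}=\pm1$), and checking the hook-length inequalities of Corollary \ref{l4} to certify $p$-singularity via the weight defect $w_{p^t}(\alpha)=d_t-1$.
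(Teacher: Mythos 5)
Your high-level strategy does match the paper's: argue by contradiction, observe that a single offending part is impossible (so at least two parts $\geq p^t$ are non-multiples of $p^t$), and refute $p$-vanishing by exhibiting a $p$-singular character with nonzero value, certifying singularity through a $p^t$-weight defect and Corollary \ref{c7}. But the proposal stops exactly where the proof begins: you never specify $\alpha$, and you explicitly defer the construction, the Murnaghan--Nakayama computation, and the singularity check to "the bulk of the deferred proof". That construction is the entire content of the theorem, and it is not routine. The paper's choice of $\alpha$ is driven by a case division comparing $f:=c_l \bmod p^t$ (where $c_l$ is the last offending part) with $e_t$: one takes $\alpha=(n-c_l,2,1^{c_l-2})$ when $e_t=0$, $\alpha=(n-c_l,1^{c_l})$ when $e_t\leq f<p^t$, and $\alpha=(n-c_l,e_t,1^{c_l-e_t})$ when $1\leq f<e_t$. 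Note that the offending part $c_l$ is not "absorbed into the first row" as you suggest; it becomes the leg of $\alpha$, which is what makes the shape interact correctly with the residue $f$. Your claim that $e_t=0$ is handled "by the same construction with the small parts omitted" is also unfounded: when $e_t=0$ the hook shape can have trivial $p^t$-core, and the paper must insert the extra part $2$ to create the core obstruction. Finally, Corollary \ref{l4} cannot certify singularity for the partitions actually needed (for $\alpha=(n-c_l,1^{c_l})$ one has $h_{2,1}^\alpha=c_l\geq p^t$, violating its hypotheses); the paper instead computes $p^t$-cores directly.

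More importantly, the guiding expectation of your plan --- that $\alpha$ can be chosen so that rim-hook stripping is forced and the value is $\pm1$ --- is not what happens and in general cannot happen. When the offending value $c_l$ occurs with multiplicity $l-s+1>1$ among the parts, the Murnaghan--Nakayama recursion genuinely branches, and the paper's computations produce values such as $(l-s+1)(-1)^{c_l}$: the number of branches, all carrying a common sign. The nontrivial step is therefore not uniqueness of removal but sign coherence, together with showing that the one potentially cancelling branch (e.g.\ the term $\chi^{(r-c_l,2,1^{c_l-2})}_{(c_{l+1},\ldots,c_h)}$ or $\chi^{(r-c_l,1^{c_l})}_{(c_{l+1},\ldots,c_h)}$) vanishes; this is done by a second weight argument via Corollary \ref{c1}, using that the remaining parts are divisible by $p^t$ while the remaining partition has too small a $p^t$-weight. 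None of this cancellation control appears in your proposal, so as it stands the argument has a genuine gap precisely at the step that makes the theorem true.
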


The proof of this theorem can be found in Section \ref{s1}.

For $p=2$ and $p=3$ we will prove in the next two theorems that there exist some $m\in\N$ such that $\sum_{c_i\geq
p^t}c_i=d_tp^t$ for $t\geq m$ and for $(c_1,\ldots,c_h)\vdash n$ a $p$-vanishing partition. For $p\geq 5$ we will prove in the next theorem that $\sum_{c_i\geq p^t}c_i \leq d_tp^t$ for every $t\geq 0$.

\begin{theor}\label{t17}
Let $(c_1,\ldots,c_h)\vdash n$ be $p$-vanishing.
Then $\sum_{c_i\geq p^t}c_i \leq d_tp^t$ in the following cases:
\begin{itemize}
\item
$p\neq 3$,
\item
$p=3$ and $t\geq 2$.
\end{itemize}
\end{theor}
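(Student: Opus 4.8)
The plan is to argue by contradiction via the hook characters. First note the inequality is trivial for $t=0$ (both sides equal $n$), so assume $t\ge 1$; writing $\beta=(c_1,\ldots,c_h)$ and using $\sum_i c_i=n=d_tp^t+e_t$, the claim $\sum_{c_i\ge p^t}c_i\le d_tp^t$ is equivalent to $\sum_{c_i<p^t}c_i\ge e_t$. So suppose instead that $\delta:=\sum_{c_i<p^t}c_i<e_t$. Since $\beta$ is $p$-vanishing, it suffices to exhibit a single $p$-singular partition $\alpha\vdash n$ with $\chi^\alpha_\beta\neq 0$, which is the contradiction we seek.

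The main tool I would use is the hook characters together with Corollary \ref{l16}. Recall the generating function
\[\sum_{m=0}^{n-1}\chi^{(n-m,1^m)}_\beta\,t^m=\frac{1}{1+t}\prod_{i=1}^{h}\bigl(1-(-t)^{c_i}\bigr),\]
read as a power series with $\frac{1}{1+t}=\sum_{j\ge 0}(-1)^jt^j$. By Corollary \ref{l16}, every hook $(c,1^{n-c})$ with $e_t\le n-c<p^t$ is $p$-singular, so it is enough to find such a hook on which $\chi$ does not vanish. Each factor with $c_i\ge p^t$ is $\equiv 1\pmod{t^{p^t}}$, hence for $m<p^t$ the coefficient of $t^m$ equals $[t^m]\frac{P(t)}{1+t}$, where $P(t):=\prod_{c_i<p^t}(1-(-t)^{c_i})$ has degree $\delta<e_t$. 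For $m\ge\delta$ this coefficient equals $(-1)^mP(-1)$. If $\beta$ has no part below $p^t$ then $P(t)=1$, so taking $m=e_t$ (that is, $\alpha=(d_tp^t,1^{e_t})$) gives the nonzero value $(-1)^{e_t}$ and the desired contradiction; this step works for every prime $p$.

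The genuine difficulty is the case where small parts are present: then $P(-1)=\prod_{c_i<p^t}(1-1)=0$, so all hook values in the whole range $e_t\le m<p^t$ vanish and plain hooks cannot detect the failure of the inequality. One must therefore use non-hook partitions. My approach would be to build $\alpha$ directly through the Murnaghan--Nakayama rule: peel the large parts $c_i\ge p^t$ off $\alpha$ in a \emph{forced} (unique) sequence down to a simple shape carrying the small parts --- for instance a single row of length $\delta$, from which the parts $c_i<p^t$ are in turn removed uniquely --- so that $|\chi^\alpha_\beta|=1$; the shape is to be chosen so that $w_{p^t}(\alpha)\neq d_t$, forcing $p$-singularity through Corollary \ref{c7}.

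The hard part will be guaranteeing non-cancellation, i.e.\ uniqueness of the rim-hook removal, while simultaneously arranging $w_{p^t}(\alpha)\neq d_t$, and doing so exactly in the stated regime. The restriction to $p\neq 3$ or $t\ge 2$ must enter precisely here: the $p=3$, $t=1$ exceptions show that no such construction can exist, for example for $\beta=(4,1)\vdash 5$ the only $3$-singular character $\chi^{(3,1,1)}$ has no hook of length $4$ and so vanishes on $\beta$. I therefore expect the delicate casework to be controlling the available room for a non-cancelling removal --- tight for $p=2$ via parity, ample for $p\ge 5$, and available for $p=3$ only once the modulus is at least $9$, that is $t\ge 2$.
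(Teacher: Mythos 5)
Your reduction and your treatment of the partitions with no part below $p^t$ are correct: the trivial cases $t=0$ and $d_t=0$ are dispatched properly, the generating-function argument for hook values is valid, and taking $m=e_t$ recovers exactly the paper's own construction for this case ($\alpha=(d_tp^t,1^{e_t})$, which is $p$-singular by Corollary \ref{l16} and takes value $(-1)^{e_t}$). Your observation that $P(-1)=0$ once a part smaller than $p^t$ is present, so that \emph{all} hooks $(n-m,1^m)$ with $e_t\leq m<p^t$ vanish on $\beta$, is also correct: hooks genuinely cannot finish the proof. But from that point on your text is a statement of intent rather than a proof, and what you defer is precisely the body of the paper's argument. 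Setting $m:=\sum_{c_i\geq p^t}c_i$, the paper splits the remaining situation $d_tp^t<m<n$ into four subcases and in each writes down an explicit non-hook partition and verifies both $p$-singularity and non-vanishing: $(d_tp^t-1,\,n-m+1,\,1^{m-d_tp^t})$ when $e_t\notin\{0,p^t-1\}$; $(d_tp^t+1,\,2^{m-d_tp^t},\,1^{n-2m+d_tp^t-1})$ when $e_t=p^t-1$ and $n-m>m-d_tp^t$; $(d_tp^t+1,\,2^{n-m},\,1^{2m-n-d_tp^t-1})$ when $e_t=p^t-1$ and $n-m<m-d_tp^t$; and $(d_tp^t,\,2^{e_t/2})$ when $e_t=p^t-1\geq 3$ and $n-m=m-d_tp^t$. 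In each case singularity comes from $w_{p^t}(\alpha)<d_t$ (Corollary \ref{c7}) or from Corollary \ref{l4}, and the value is $\pm 1$ because the hooks of lengths $c_i\geq p^t$ are forced off the long first row while the tail of the shape blocks any alternative removal. Your plan (``peel the large parts off in a forced sequence down to a simple shape with $w_{p^t}(\alpha)\neq d_t$'') is indeed the right strategy --- it is what the paper does --- but designing these shapes and checking the two properties is exactly where all the work lies, and none of it is carried out.

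Your diagnosis of where the hypothesis ``$p\neq 3$ or $t\geq 2$'' must enter is also only partially right. The constructions above are uniform in $p$: the exception does not arise from parity constraints at $p=2$ or from extra room at $p\geq 5$, but from the single degenerate subcase $e_t=p^t-1$ with $n-m=m-d_tp^t$, where $e_t=2(n-m)$ is forced to be even; if moreover $e_t\leq 2$ this gives $e_t=2$, $p^t=3$, i.e.\ $p=3$, $t=1$, $n\equiv 2\bmod 3$, which is exactly the excluded case (your example $(4,1)\vdash 5$ correctly shows it must be excluded --- in fact the paper proves the sharper statement that the inequality holds except possibly when $p=3$, $t=1$ and $n\equiv 2\bmod 3$). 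Without producing the four constructions and isolating this degenerate subcase, the proposal does not prove the theorem.
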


For a proof see Section \ref{s2}.

\begin{theor}\label{t19}
Let $(c_1,\ldots,c_h)\vdash n$ be $p$-vanishing. Then $\sum_{c_i\geq p^t}c_i
\geq d_tp^t$ in the following cases:
\begin{itemize}
\item
$p=2$ and $t\geq 3$,
\item
$p=3$ and $t\geq 2$.
\end{itemize}
\end{theor}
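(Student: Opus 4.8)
The plan is to prove the contrapositive: assuming $\sum_{c_i\geq p^t}c_i<d_tp^t$ for some $t$ in the stated range, I will exhibit a $p$-singular character of $S_n$ that does not vanish on the class $(c_1,\ldots,c_h)$, contradicting $p$-vanishing. First I would dispose of the trivial case $d_t=0$ (where the asserted inequality reads $\geq 0$) and reduce to $d_t\geq 1$, and I would set aside the boundary case $e_t=0$, since the clean source of $p$-singular hooks used below is Corollary \ref{l16}, whose hypotheses require $e_t\neq 0$.

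The main tool will be the hook characters $\chi^{(n-j,1^j)}$, whose values on $(c_1,\ldots,c_h)$ are packaged by the Murnaghan--Nakayama rule into the single generating function
\[
\sum_{j\geq 0}\chi^{(n-j,1^j)}_{(c_1,\ldots,c_h)}\,t^j=\frac{1}{1+t}\prod_{i=1}^h\bigl(1-(-1)^{c_i}t^{c_i}\bigr).
\]
On the character side, $\chi^{(n-j,1^j)}$ has degree $\binom{n-1}{j}$, so it is $p$-singular exactly when $p\mid\binom{n-1}{j}$, i.e. (by Lucas) when some base-$p$ digit of $j$ exceeds the corresponding digit of $n-1$; in the window $e_t\leq j<p^t$ this is precisely the family of $p$-singular hooks supplied by Corollary \ref{l16}. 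Hence if $(c_1,\ldots,c_h)$ is $p$-vanishing, the coefficient of $t^j$ in the product above must vanish for every such $p$-singular degree $j$. Writing $S_t=\sum_{c_i\geq p^t}c_i$, the parts with $c_i\geq p^t$ contribute nothing in degrees below $p^t$, so for $j<p^t$ these coefficients coincide with those of $H(t):=\frac{1}{1+t}\prod_{c_i<p^t}\bigl(1-(-1)^{c_i}t^{c_i}\bigr)$.

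The assumption $S_t<d_tp^t$ gives $n-S_t>e_t$, whence $\deg H=(n-S_t)-1\geq e_t$, and the leading coefficient of $H$ is $\pm 1\neq 0$. If in addition $n-S_t\leq p^t$, this nonzero coefficient sits in the window $[e_t,p^t)$, directly contradicting the forced vanishing of all $p$-singular hook values there; this settles the case of few small parts.

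The remaining, and genuinely hard, case is when the small parts are numerous, so that $n-S_t>p^t$ and $\deg H\geq p^t$: the surviving top coefficient of $H$ now falls outside the controllable window $[e_t,p^t)$, and I must instead show that $H$ still carries a nonzero coefficient at some base-$p$ non-submask degree. I expect this to be the crux, and the reason for the restrictions $t\geq 3$ when $p=2$ and $t\geq 2$ when $p=3$: one analyzes the coefficient pattern of $\prod_{c_i<p^t}(1-(-1)^{c_i}t^{c_i})$, whose sign structure is quite different for $p=2$ (factors $1\pm t^{c_i}$) and for $p=3$, and one uses the size of $p^t$ to locate a $p$-singular degree at which the coefficient cannot cancel — enlarging the test family, if needed, from pure hooks to near-hook shapes $(n-j-1,2,1^{j-1})$, whose Murnaghan--Nakayama values differ from the hook generating function by a controlled correction. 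I would carry out this final step as separate computations for $p=2$ and $p=3$, and fold the edge case $e_t=0$ (where no hook of degree $j<p^t$ is $p$-singular) into the same analysis by using hooks of degree $j\geq p^t$ together with the contribution of the parts $c_i\geq p^t$ to the full generating function.
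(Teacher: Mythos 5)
Your opening computation is correct, but it only reproduces what the paper already isolates as the easy preliminary step. The generating function $\sum_{j}\chi^{(n-j,1^j)}_{(c_1,\ldots,c_h)}t^j=\frac{1}{1+t}\prod_i\bigl(1-(-1)^{c_i}t^{c_i}\bigr)$, Lucas' theorem, and Corollary \ref{l16} do yield a contradiction when $\sum_{c_i\geq p^t}c_i<d_tp^t$ and additionally $\sum_{c_i<p^t}c_i\leq p^t$: the top coefficient of your $H(t)$ is $\pm1$ and sits at a $p$-singular degree in $[e_t,p^t)$. This is essentially the paper's Lemma \ref{l23}, whose proof uses exactly the same singular hooks $(n-j,1^j)$, $e_t\leq j<p^t$, via Murnaghan--Nakayama. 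What you have proved is the reduction; the theorem itself is the case you defer.

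That deferred case --- many small parts, together with $e_t=0$ --- is not an edge case to be ``folded in'', and your sketch of it (analyze the sign pattern of $\prod_{c_i<p^t}(1-(-1)^{c_i}t^{c_i})$, enlarge the test family to $(n-j-1,2,1^{j-1})$) is a statement of intent, not an argument. The paper's treatment of precisely this case occupies Sections \ref{s4} and \ref{s3} and needs tools your plan cannot supply. First, when $e_t=0$ there are \emph{no} $p$-singular hooks of leg length $j<p^t$ at all (every digit of $n-1$ in base $p$ below position $t$ equals $p-1$, so $p\nmid\binom{n-1}{j}$ for all $j<p^t$), and above degree $p^t$ the factors from the large parts re-enter the product, so $H$ no longer computes anything; the paper instead proves the separate Theorem \ref{t13}, a long case analysis with genuine exceptional families (e.g.\ $(2,1,1)$ and partitions ending in $(f,2,1,1)$ when $p^t=4$) which show this subcase cannot follow from any uniform coefficient argument. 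Second, the remaining congruence classes of $n$ are handled by an induction on $t$ in which the partition is first forced, via Theorems \ref{t15} and \ref{t17}, into the form $(p^{t-1}f_1,\ldots,p^{t-1}f_l,c_{l+1},\ldots,c_h)$, and test characters are then built by prescribing a $p^{t-1}$-core and $p^{t-1}$-quotient so that Lemma \ref{l3} factors their values through $(f_1,\ldots,f_l)$; nothing in a hook/near-hook generating function sees this multiplicative structure. Third, even the terminal computations require characters far from hooks and near-hooks, e.g.\ $(n-5,3,2)$, $(n-6,3,3)$, $(n-6,2,2,2)$, $(n-7,2^2,1^3)$. Finally, the hypotheses $t\geq3$ ($p=2$) and $t\geq2$ ($p=3$) are forced by actual counterexamples at smaller $t$ --- $(1,1)$ and $(2,1,1)$ are 2-vanishing, $(2,1)$ is 3-vanishing, and each violates the inequality --- so any completed argument must use these bounds in an essential, visible way; your proposal never indicates where they would enter. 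As it stands, the proof covers only the easy half of the reduction.
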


For the proof of this theorem see Section \ref{s3} (in Section \ref{s4} we will prove a theorem used in the proof of Theorem \ref{t19}).

We will now show how characters can be evaluated on certain elements of $S_n$
containing cycles of length divisible by a fixed $r\geq 1$.

\begin{defi}
Let $\beta_1,\ldots,\beta_s$ be partitions and define $m:=|\beta_1|+\ldots+|\beta_s|$. If $m=0$, then we define
\[\chi^{((0),\ldots,(0)))}_{(0)}:=1.\]
If $m\geq 1$ and $\lambda\vdash m$, then let $k\geq 1$ be a part of $\lambda$ and
$\gamma\vdash m-k$ be obtained from $\lambda$ by removing a part of length $k$. In this case we define recursively
\[\chi^{(\beta_1,\ldots,\beta_s)}_\lambda:=\sum_{l=1}^s\sum_{{(i,j)\in[\beta_l]:
}\atop{h_{i,j}^{\beta_l}=k}}(-1)^{l_{i,j}^{\beta_l}}\chi^{(\beta_1,\ldots,\beta_
{l-1},\beta_l\setminus R_{i,j}^{\beta_l},\beta_{l+1},\ldots,\beta_s)}_\gamma.\]
\end{defi}

It can be easily shown that $\chi^{(\beta_1,\ldots,\beta_s)}_\lambda$ is well defined, that is it does not depend on the order in which the parts of $\lambda$ are removed. It can also be proved by simply applying the formula for induced characters that
\[\chi^{(\beta_1,\ldots,\beta_s)}=\Ind_{S_{|\beta_1|}\times\cdots\times S_{|\beta_s|}}^{S_m}(\chi^{\beta_1}\cdots\chi^{\beta_s}).\]

In the following $\delta_r(\alpha)$ will denote the $r$-sign of $\alpha$.

\begin{lemma}\label{l3}\label{c6}
Let $\alpha$ be a partition of $n$. Let $\gamma=(\gamma_1,\ldots,\gamma_s)\vdash
w_r(\alpha)$ and $\lambda\vdash n-rw_r(\alpha)$. Also let $\pi\in
S_{rw_r(\alpha)}$ with cycle partition $(r\gamma_1,\ldots,r\gamma_s)$ and
$\rho\in S_{\{rw_r(\alpha)+1,\ldots,n\}}$ with cycle partition $\lambda$. Then
\[\chi^\alpha(\pi\rho)=\delta_r(\alpha)\chi^{\alpha_{(r)}}_\lambda\chi^{\alpha^{
(r)}}_\gamma.\]
\end{lemma}

See 4.58 of \cite{r1}.

We still need one theorem before being able to prove Theorem \ref{t1}.

\begin{theor}\label{t20}
Let $(c_1,\ldots,c_h)$ be a partition of $n$ and let $m\geq 0$. Assume that
$d_tp^t=\sum_{p^t|c_i} c_i$ for every $m\leq t\leq k$. Let $l$ be maximal such
that $c_l\geq p^m$. Then $(c_1,\ldots,c_h)$ is $p$-vanishing if and only if
$(c_{l+1},\ldots,c_h)$ is $p$-vanishing.
\end{theor}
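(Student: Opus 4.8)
The plan is to reduce the whole statement to a single application of Lemma~\ref{l3} with $r=p^m$, after first recording the shape forced by the hypothesis. The case $t=m$ of the assumption already forces every part $c_i\ge p^m$ to be divisible by $p^m$: writing $Q$ for the set of parts that are $\ge p^m$ but not divisible by $p^m$, the parts divisible by $p^m$ sum to $d_mp^m$, so $\sum_{i\in Q}c_i=\bigl(\sum_{c_i\ge p^m}c_i\bigr)-d_mp^m\le n-d_mp^m=e_m<p^m$, which is impossible unless $Q=\varnothing$. Hence $c_1,\dots,c_l$ are exactly the parts divisible by $p^m$, they sum to $d_mp^m$, and $\mu:=(c_{l+1},\dots,c_h)\vdash e_m$. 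Telescoping the full hypothesis over $m\le t\le k$ shows that the parts among $c_1,\dots,c_l$ divisible by exactly $p^j$ sum to $a_jp^j$ for $j\ge m$ (and there are none for $j<m$), i.e.\ $\widehat\gamma:=(c_1,\dots,c_l)$ is a partition of $d_mp^m$ of $p$-adic type; by Corollary~\ref{t12} it is $p$-vanishing. I also set $\gamma:=(c_1/p^m,\dots,c_l/p^m)\vdash d_m$.

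The engine is a dichotomy for arbitrary $\alpha\vdash n$. If $w_{p^m}(\alpha)\ne d_m$, I evaluate $\chi^\alpha_{(c_1,\dots,c_h)}$ by the Murnaghan--Nakayama rule, removing the large cycles $c_1,\dots,c_l$ \emph{first}: each strips a rim hook of length $c_i=p^m(c_i/p^m)$, which by Corollary~\ref{c1} lowers the $p^m$-weight by $c_i/p^m$, so any surviving branch lowers it by exactly $d_m$. If $w_{p^m}(\alpha)<d_m$ the weight would go negative, while if $w_{p^m}(\alpha)>d_m$ the resulting partition would have positive $p^m$-weight yet size $e_m<p^m$; both are impossible, so the value is $0$. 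If $w_{p^m}(\alpha)=d_m$, the $p^m$-divisible cycles account for exactly $p^m w_{p^m}(\alpha)$ points and Lemma~\ref{l3} applies:
\[\chi^\alpha_{(c_1,\dots,c_h)}=\delta_{p^m}(\alpha)\,\chi^{\alpha_{(p^m)}}_{\mu}\,\chi^{\alpha^{(p^m)}}_{\gamma},\qquad \nu:=\alpha_{(p^m)}\vdash e_m.\]

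I handle the quotient factor by \emph{gluing}. Let $\widehat\alpha\vdash d_mp^m$ be the partition with empty $p^m$-core and $p^m$-quotient equal to $\alpha^{(p^m)}$. Applying Lemma~\ref{l3} to $\widehat\alpha$ on the class $\widehat\gamma=(c_1,\dots,c_l)$ gives $\chi^{\widehat\alpha}_{\widehat\gamma}=\pm\,\chi^{\alpha^{(p^m)}}_{\gamma}$, so the quotient factor is, up to sign, a value of a genuine symmetric-group character on the $p$-adic (hence $p$-vanishing) class $\widehat\gamma$. Using $b_j(\alpha)=w_{p^j}(\alpha_{(p^{j+1})})$ together with the identities $b_j(\nu)=b_j(\alpha)$ for $j<m$ (since $p^{j+1}\mid p^m$) and $b_j(\widehat\alpha)=b_j(\alpha)$ for $j\ge m$ (the $p^m$-quotient determines the higher weights), Lemma~\ref{l14} yields the clean statement that $\chi^\alpha$ is $p$-singular if and only if $\chi^{\nu}$ or $\chi^{\widehat\alpha}$ is $p$-singular.

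With these in hand both directions are short. For ``$\mu$ $p$-vanishing $\Rightarrow$ $(c_1,\dots,c_h)$ $p$-vanishing'', take $\chi^\alpha$ $p$-singular: if $w_{p^m}(\alpha)\ne d_m$ the value vanishes by the dichotomy, otherwise it factors and either $\chi^\nu$ is $p$-singular (so $\chi^\nu_\mu=0$) or $\chi^{\widehat\alpha}$ is $p$-singular (so $\chi^{\widehat\alpha}_{\widehat\gamma}=0$, whence $\chi^{\alpha^{(p^m)}}_\gamma=0$). For the converse, given $\chi^{\nu}$ $p$-singular with $\nu\vdash e_m$, I build $\alpha$ with $p^m$-core $\nu$ and $p^m$-quotient $((d_m),\varnothing,\dots,\varnothing)$; then $w_{p^m}(\alpha)=d_m$, $\chi^\alpha$ is $p$-singular, and $\chi^{\alpha^{(p^m)}}_\gamma=\chi^{(d_m)}_\gamma=1$, so the assumed vanishing of $\chi^\alpha_{(c_1,\dots,c_h)}$ forces $\chi^\nu_\mu=0$. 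I expect the main obstacle to be the $p$-singularity bookkeeping across the splitting --- proving that $\chi^\alpha$ is $p$-singular exactly when one of $\chi^\nu$ (the $p^m$-core) or $\chi^{\widehat\alpha}$ (the $p^m$-quotient) is --- since this couples the three partitions $\alpha$, $\nu$, $\widehat\alpha$ and relies on the fact that $w_{p^j}(\alpha)$ for $j\ge m$ depends only on the $p^m$-quotient; the Murnaghan--Nakayama vanishing when $w_{p^m}(\alpha)\ne d_m$ is the other point needing care, mainly to justify removing the large cycles first.
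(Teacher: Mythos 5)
Your proposal is correct, and while it ends up at the same factorization as the paper, the machinery driving it is genuinely different. What is shared: your preliminary step (the parts $c_1,\ldots,c_l$ are exactly the parts divisible by $p^m$, sum to $d_mp^m$, and form a $p$-adic-type partition of $d_mp^m$) matches the paper's opening, the evaluation engine is Lemma \ref{l3} in both cases, and your converse construction ($\alpha$ with $p^m$-core $\nu$ and $p^m$-quotient $((d_m),(0),\ldots,(0))$) is literally the paper's $\alpha=(\beta_1+d_mp^m,\beta_2,\ldots)$ in core--quotient language. What differs is how $p$-singularity is tracked and how the quotient factor is disposed of. The paper dichotomizes on the maximal ``class'' of $\alpha$ (Lemma \ref{l22}, i.e.\ Theorem \ref{c5}\ref{i4}): if $\alpha$ is of class $r\geq m$, the hooks $((p^k)^{a_k},\ldots,(p^m)^{a_m})$, hence the parts $c_1,\ldots,c_l$, cannot be removed and the value vanishes outright; if only of class $r<m$, the core inherits $p$-singularity and the core factor dies. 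You instead dichotomize on $w_{p^m}(\alpha)=d_m$ versus $w_{p^m}(\alpha)\neq d_m$ (with Corollary \ref{c1} doing the weight bookkeeping), and split singularity via Lemma \ref{l14} as $b_i(\alpha)=b_i(\nu)$ for $i<m$ and $b_i(\alpha)=b_i(\widehat\alpha)$ for $i\geq m$. Your most distinctive move is the gluing: when the core is non-singular but the quotient data is bad, you realize the quotient factor as $\pm\chi^{\widehat\alpha}_{\widehat\gamma}$ and kill it with Corollary \ref{t12} applied to the $p$-adic-type class $\widehat\gamma\vdash d_mp^m$. Note that this case is genuinely disjoint from your ``weight mismatch'' case (e.g.\ $\alpha=(2,2)$, $n=4$, $p=2$, $m=1$ has $w_2(\alpha)=d_1$ but is of class $2$), and the paper handles it by non-removability of hooks rather than by vanishing of a factor, so the two case analyses really do cut the problem differently. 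What your route buys is cleaner bookkeeping: it sidesteps the reordering of hook removals implicit in the paper's appeal to Lemma \ref{l1}, and makes the core/quotient independence of singularity explicit. Its cost is reliance on the standard fact that the $p^m$-quotient determines $w_{p^j}(\alpha)$ for all $j\geq m$ (equivalently $b_j(\widehat\alpha)=b_j(\alpha)$), which is not among the paper's stated lemmas, though it is in the cited core--quotient theory of \cite{b4}, plus the extra input of Corollary \ref{t12}. Two trivial touch-ups: dispatch $m>k$ in a separate sentence (your appeal to the case $t=m$ of the hypothesis needs $m\leq k$), and say explicitly that Lemma \ref{l14} is applied to $\nu$ and $\widehat\alpha$ relative to the $p$-adic digits of $e_m$ and of $d_mp^m$, which are the $a_i$ truncated below and above $m$ respectively.
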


\begin{proof}
The theorem clearly holds if $m>k$. So we can assume that $m\leq k$.

Notice that by assumption
\[0\leq\sum_{i\leq l:p^t\centernot|c_i}c_i=\sum_{i\leq l}c_i-\sum_{i:p^t|c_i} c_i\leq
n-d_tp^t<p^t.\]
By definition of $l$ it then follows that $p^t\mid c_i$ for $i\leq l$.

By assumption we have that, for $m\leq t<k$,
\begin{equation}\label{eq7}
\sum_{{i:p^t|c_i,}\atop{p^{t+1}\centernot|c_i}}c_i=d_tp^t-d_{t+1}p^{t+1}=a_tp^t.
\end{equation}
Also, again by assumption,
\begin{equation}\label{eq8}
\sum_{i:p^k|c_i}c_i=d_kp^k=a_kp^k.
\end{equation}
So by Lemma \ref{l1} and the Murnaghan-Nakayama formula, if
$\chi^\alpha_{(c_1,\ldots,c_h)}\not=0$ we can remove from $\alpha$ a sequence of
hooks with lengths $((p^k)^{a_k},\ldots,(p^m)^{a_m})$.

First assume that $(c_{l+1},\ldots,c_h)$ is $p$-vanishing. Let $\alpha\vdash n$
with $p\mid\deg(\chi^\alpha)$ and $r\leq k$ maximal such that $\alpha$ is of
class $r$. Notice that such an $r$ exists by Lemma \ref{l22}. If $r\geq m$, then
we cannot remove from $\alpha$ a sequence of hooks with lengths
$(c_1,\ldots,c_h)$ by the previous part of the proof and so in this case
$\chi^\alpha_{(c_1,\ldots,c_h)}=0$. Assume now that $\alpha$ is of class $r$ but
not of class $m$ for some $r<m$. Then, by Lemma \ref{l1} and the remark at the
beginning of the proof, if $\beta$ is obtained from $\alpha$ by removing a
sequence of hooks of lengths $(c_1,\ldots,c_l)$ we have that
$\beta=\alpha_{(p^m)}$ (as such a $\beta$ satisfies $|\beta|<p^m$ and is
obtained from $\alpha$ by removing hooks of lengths divisible by $p^m$). As
$\alpha$ is not of class $m$ we then have that $\alpha_{(p^m)}$ can be obtained
from $\alpha$ by removing a sequence of hooks of lengths
$((p^k)^{a_k},\ldots,(p^m)^{a_m})$. As $|\alpha_{(p^m)}|=a_0+\ldots+a_
{m-1}p^{m-1}$ it follows that $\alpha_{(p^m)}$ is of class $r$ (as $\alpha$ is
of class $r$ but not of class $m$) and so, in particular, we have by Lemma
\ref{l22} that $p\mid\deg(\chi^{\alpha_{(p^m)}})$. As
$(c_{l+1},\ldots,c_h)\vdash|\alpha_{(p^m)}|$ is $p$-vanishing, it then follows
from the Murnaghan-Nakayama formula and Lemma \ref{l3} that, for some $a\in\Z$,
\[\chi^\alpha_{(c_1,\ldots,c_h)}=a\chi^{\alpha_{(p^m)}}_{(c_{l+1},\ldots,c_h)}
=0.\]
So, if $(c_{l+1},\ldots,c_h)$ is $p$-vanishing, then $(c_1,\ldots,c_h)$ is also
$p$-vanishing.

Assume now that $(c_1,\ldots,c_h)$ is $p$-vanishing. Let $\beta\vdash
|(c_{l+1},\ldots,c_h)|=e_m$ with $p\mid\deg(\chi^\beta)$ and define
$\alpha:=(\beta_1+d_mp^m,\beta_2,\beta_3,\ldots)$. As $|\beta|<p^m$ and by
assumption $d_mp^m\not=0$ so that $(1,\beta_1+1)\in[\alpha]$ and then
\begin{eqnarray*}
h_{1,\beta_1+1}^\alpha&=&d_mp^m=a_mp^m+\ldots+a_kp^k,\\
l_{1,\beta_1+1}^\alpha&=&0,
\end{eqnarray*}
we can remove from $\alpha$ a sequence of hooks with lengths
$((p^k)^{a_k},\ldots,(p^m)^{a_m})$ in a unique way obtaining $\beta$. As
$p\mid\deg(\chi^\beta)$ and the $p$-adic decomposition of $|\beta|$ is
$a_{m-1}p^{m-1}+\ldots+a_0$, we have by Theorem \ref{c5} applied to both
$\alpha$ and $\beta$ that $p$ divides the degree of $\chi^\alpha$. So, again as
$|\beta|<p^m$, $h_{1,\beta_1+1}^\alpha=a_mp^m+\ldots+a_kp^k$ and
$l_{1,\beta_1+1}^\alpha=0$, by definition of $l$ and as $(c_1,\ldots,c_h)$ is
$p$-vanishing, we have by the Murnaghan-Nakayama formula that
\[0=\chi^\alpha_{(c_1,\ldots,c_h)}=\chi^\beta_{(c_{l+1},\ldots,c_h)}\]
and as this holds for each $\beta\vdash |(c_{l+1},\ldots,c_h)|$ with
$p\mid\deg(\chi^\beta)$, we have that $(c_{l+1},\ldots,c_h)$ is $p$-vanishing.
\end{proof}

We are now ready to prove Theorem \ref{t1}.

\begin{proof}[Proof of Theorem \ref{t1}]
Let notation be as in the statement of Theorem \ref{t1} and let $l$ be maximal such that
$c_l\geq p^r$. Then by Theorems \ref{t17} and \ref{t19} we have that
\[\sum_{i:c_i\geq p^t}c_i=d_tp^t\]
for $t\geq r$. In particular by Theorem \ref{t15}
\[\sum_{i:p^t|c_i}c_i=d_tp^t\]
for $t\geq r$. So we can apply Theorem \ref{t20} and we obtain that
$(c_{l+1},\ldots,c_h)$ is $p$-vanishing. As $(c_1,\ldots,c_l)\vdash d_rp^r$ we
also have that $(c_{l+1},\ldots,c_h)\vdash e_r$. Also from Equations \eqref{eq7}
and \eqref{eq8},
\[\sum_{{i:p^t|c_i,}\atop{p^{t+1}\centernot|c_i}}c_i=a_tp^t,\]
for $t\geq r$ and so $(c_1,\ldots,c_l)\vdash d_rp^r$ is of $p$-adic type.

The other direction follows easily by Theorem \ref{t20}.
\end{proof}

We will use the following lemma in the proof of Theorem \ref{t3}.

\begin{lemma}\label{l23}
Let $t\geq 0$ and assume that $(c_1,\ldots,c_h)\vdash n$ is $p$-vanishing. If
$\sum_{c_j\geq p^t}c_j<d_tp^t$ then $\sum_{c_j<e_t}c_j>e_t$.
\end{lemma}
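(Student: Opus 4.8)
The plan is to evaluate the hook characters $\chi^{(n-r,1^{r})}$ on the class $(c_1,\dots,c_h)$ through the classical generating function
\[
\sum_{r=0}^{n-1}(-1)^{r}\chi^{(n-r,1^{r})}_{(c_1,\dots,c_h)}\,s^{r}=\frac{1}{1-s}\prod_{i=1}^{h}\bigl(1-s^{c_i}\bigr),
\]
which follows from the Murnaghan--Nakayama rule (equivalently from Lemma \ref{l3}). I first dispose of the degenerate cases: the hypothesis forces $d_t\geq 1$ (otherwise $\sum_{c_j\geq p^{t}}c_j=0=d_tp^{t}$), and we may assume $e_t\geq 1$, which puts us exactly in the range where Corollary \ref{l16} applies. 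That corollary shows $\chi^{(n-r,1^{r})}$ is $p$-singular for every $r$ with $e_t\leq r\leq p^{t}-1$, so, as $(c_1,\dots,c_h)$ is $p$-vanishing, the corresponding coefficients on the left-hand side vanish.

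Writing $[F]_{r}$ for the coefficient of $s^{r}$ in a series $F$ and $G(s)=\prod_i(1-s^{c_i})$, these vanishings translate into two facts. Differencing consecutive vanishing coefficients of $(1-s)^{-1}G$ gives \textbf{(F1)}: $[G]_{r}=0$ for $e_t<r<p^{t}$, while the relation at $r=e_t$ (the boundary hook, with leg length exactly $e_t$) gives the partial-sum identity \textbf{(F2)}: $\sum_{m=0}^{e_t}[G]_{m}=0$. Since every part $c_i\geq p^{t}$ contributes a factor $\equiv 1\pmod{s^{p^{t}}}$, in all degrees below $p^{t}$ I may replace $G$ by $H(s)=\prod_{c_i<p^{t}}(1-s^{c_i})$, and in degrees at most $e_t$ by $K(s)=\prod_{c_i\leq e_t}(1-s^{c_i})$.

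I would then argue by contradiction, assuming $\sum_{c_i<e_t}c_i\leq e_t$. Expanding $K=\prod_{c_i<e_t}(1-s^{c_i})\cdot(1-s^{e_t})^{\nu}$, where $\nu$ is the number of parts equal to $e_t$, and computing $\sum_{m=0}^{e_t}[K]_{m}$ modulo $s^{e_t+1}$, identity (F2) becomes $\prod_{c_i<e_t}(1-1)=\nu$; as the left side is $0$ or $1$, this forces $\nu\leq 1$, with $\nu=1$ only when there is no part below $e_t$. In either case this yields $\sum_{c_i\leq e_t}c_i\leq e_t$, i.e. $\deg K\leq e_t$. Now two possibilities remain. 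If some part $b$ satisfies $e_t<b<p^{t}$, choose the smallest such $b$; since $\deg K\leq e_t<b$ one computes $[H]_{b}=[K]_{b}-m_{b}=-m_{b}\neq 0$, where $m_b$ is the multiplicity of $b$, contradicting (F1). Otherwise no part lies in $(e_t,p^{t})$, so every part is at most $e_t$ or at least $p^{t}$, whence $\sum_{c_i\geq p^{t}}c_i=n-\sum_{c_i\leq e_t}c_i\geq n-e_t=d_tp^{t}$, contradicting the hypothesis.

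The main obstacle is the presence of parts equal to the boundary value $e_t$: the degree conditions (F1) alone cannot exclude them, and the crux of the argument is the extra partial-sum relation (F2), coming from the hook whose leg length is exactly $e_t$ (the extreme case of Corollary \ref{l16}). It pins down the multiplicity $\nu\leq 1$ and thereby the bound $\deg K\leq e_t$, which is precisely what makes both remaining cases close.
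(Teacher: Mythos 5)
Your proof is correct, and it takes a genuinely different route from the paper's. The paper also gets its supply of $p$-singular characters from Corollary \ref{l16}, but it then works with individual hook and near-hook characters adapted to the partition itself: $(n-c_j,1^{c_j})$ for a carefully chosen part $c_j$ with $e_t\leq c_j<p^t$ and $\sum_{i>j}c_i\leq c_j$, and $(n-c_h+1,1^{c_h-1})$ when the offending part is the last one. Each of these is evaluated on $(c_1,\ldots,c_h)$ by hand, through an induction on repeated Murnaghan--Nakayama steps (the value comes out as $\pm$ a multiplicity count), with a case split according to whether the relevant part is final and whether $c_h>e_t$ or $c_h=e_t$. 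You instead use \emph{all} hooks $(n-r,1^r)$ with $e_t\leq r<p^t$ at once, encoding their values in the classical identity $\sum_{r}(-1)^r\chi^{(n-r,1^r)}_{(c_1,\ldots,c_h)}s^r=\prod_i(1-s^{c_i})/(1-s)$ (a standard consequence of the Murnaghan--Nakayama rule, available in James--Kerber, which the paper already cites for hook values -- though it does not really follow from Lemma \ref{l3}, which is about cores and quotients). The $p$-vanishing hypothesis then becomes the linear coefficient conditions (F1)/(F2) on $\prod_i(1-s^{c_i})$, and a short truncation argument closes both cases; your final dichotomy (a part in $(e_t,p^t)$ exists or not) matches the paper's cases $l<h$ versus $l=h$, and your partial-sum relation (F2) does exactly the work that the paper's separate treatment of $c_h=e_t$ does. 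What your approach buys is the elimination of all the ad hoc character computations in favour of one classical identity plus polynomial bookkeeping; what the paper's buys is self-containedness, using nothing beyond the Murnaghan--Nakayama rule itself.

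One caveat, which you share with the paper: Corollary \ref{l16} requires $e_t\neq 0$, and the case $e_t=0$ cannot be dismissed by saying ``we may assume $e_t\geq 1$''. Indeed for $e_t=0$ the statement itself fails: $(1,1)\vdash 2$ is $2$-vanishing (vacuously, as $S_2$ has no $2$-singular characters), it satisfies $\sum_{c_j\geq 2}c_j=0<2=d_1p$, yet the conclusion $\sum_{c_j<0}c_j>0$ is an empty sum being positive. So the lemma is really only correct, and only provable by either argument, under the implicit hypothesis $e_t\neq 0$; the paper leaves this unstated exactly as you do, and in its applications the lemma is invoked through its contrapositive, where the issue is harmless.
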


\begin{proof}
Assume that for some $j\not=h$ we have that $n-d_tp^t\leq c_j<p^t$ and
$\sum_{i=j+1}^h c_i\leq c_j$. Let $\alpha:=(n-c_j,1^{c_j})$. Then $\chi^\alpha$
has degree divisible by $p$ by Corollary \ref{l16}. Also as $h_{2,1}^\alpha=c_j$
when removing any sequence of hooks of lengths $(c_1,\ldots,c_h)$ from $\alpha$
we need to remove all hooks of length $>c_j$ from the first row. Let $s$ be minimal
such that $c_s=c_j$. Notice that $s\leq j$. Since
\[h_{1,2}^\alpha=n-c_j-1\geq n-c_j-c_h\geq c_1+\ldots+c_{s-1}\]
and $l_{1,2}^\alpha=0$ we can remove in a unique way the first $s-1$ hooks of
the sequence and obtain the partition $((j-s)c_j+f,1^{c_j})$, where $1\leq
f=\sum_{i=j+1}^h c_i\leq c_j$. Since $1\leq f\leq c_j$ from the
Murnaghan-Nakayama formula we have that, for $l\geq 1$,
\[\chi^{((l-1)c_j+f,1^{c_j})}_{(c_j^l,c_{j+1},\ldots,c_h)}=\left\{\begin{array}{
ll}
(-1)^{c_j-1}\chi^{(f)}_{(c_{j+1},\ldots,c_h)}&\mbox{if }l=1,\\
(-1)^{c_j-1}\chi^{((l-1)c_j+f)}_{(c_j^{l-1},c_{j+1},\ldots,c_h)}+\chi^{
((l-2)c_j+f,1^{c_j})}_{(c_j^{l-1},c_{j+1},\ldots,c_h)}&\mbox{if }l>1
\end{array}\right.\]
and so by induction on $l$ we have that
$\chi^{((l-1)c_j+f,1^{c_j})}_{(c_j^l,c_{j+1},\ldots,c_h)}=(-1)^{c_j-1}l$. In
particular
$\chi^\alpha_{(c_1,\ldots,c_h)}=\chi^{((j-s)c_j+f,1^{c_j})}_{(c_j^{j-s+1},c_{j+1
},\ldots,c_h)}\not=0$ and so $(c_1,\ldots,c_h)$ is not $p$-vanishing.

If $\sum_{c_j<e_t}c_j\leq e_t$ and
\[n-\sum_{j:c_j\geq p^t}c_j=\sum_{j:c_j<p^t}c_j>e_t\]
then we have that $(c_1,\ldots,c_h)$ has at least one part of length between $e_t$
and $p^t-1$. Let $l$ be maximal such that $e_t\leq c_l\leq p^t-1$. If $l<h$, then we
can conclude by the previous part with $j=l$ that $(c_1,\ldots,c_h)$ is not
$p$-vanishing in this case.

So assume now that $l=h$. If $c_h>e_t$ then let $\beta:=(n-c_h+1,1^{c_h-1})$.
From Corollary \ref{l16} it follows that $p\mid\deg(\chi^\beta)$. Also from the
Murnaghan-Nakayama formula we easily have that
\[\chi^\beta_{(c_1,\ldots,c_h)}=\chi^{(1^{c_h})}_{(c_h)}=(-1)^{c_h-1}\not=0\]
and so $(c_1,\ldots,c_h)$ is not $p$-vanishing. If $c_h=e_t$, then we have that $h\geq
2$ and $c_{h-1}<p^t$, since $\sum_{c_j<p^t}c_j>e_t$. So we can conclude from the
first part of the proof with $j=h-1$ that $(c_1,\ldots,c_h)$ is not
$p$-vanishing in this case either and then the lemma follows.
\end{proof}

We will now prove Theorem \ref{t3}, which states the equivalence of Conjectures \ref{c3} and \ref{c4}.

\begin{proof}[Proof of Theorem \ref{t3}]
It is clear that Conjecture \ref{c3} would imply Conjecture \ref{c4}, so we only need to prove the other direction.

We already know from Corollary \ref{t12} that if a partition is of $p$-adic type
then it is $p$-vanishing. So assume now that $(c_1,\ldots,c_h)\vdash n$ is
$p$-vanishing. Let $t\geq 1$. We can write
\[n=d_tp^t+a_{t-1}p^{t-1}+e_{t-1}.\]
Assume that
\begin{equation}\label{eq1}
\sum_{i:c_i\geq p^{t-1}}c_i\geq d_{t-1}p^{t-1}=d_tp^t+a_{t-1}p^{t-1}
\end{equation}
(notice that this condition holds for $t=1$, as then $p^{t-1}=1$). We will prove
that under this assumption, if Conjecture \ref{c4} holds, then
\begin{equation}\label{eq2}
\sum_{i:c_i\geq p^t}c_i\geq d_tp^t.
\end{equation}
From Theorems \ref{t15} and \ref{t17} and Equation \eqref{eq1} we have that
there exists $l\geq 0$, such that
\begin{equation}\label{eq3}
(c_1,\ldots,c_h)=(p^{t-1}f_1,\ldots,p^{t-1}f_l,c_{l+1},\ldots,c_h)
\end{equation}
with $(f_1,\ldots,f_l)\vdash d_{t-1}$ and $c_{l+1}<p^{t-1}$. Let $\beta\vdash
d_{t-1}$ with $p\mid\deg(\chi^\beta)$ and let $\alpha$ be the partition with
\begin{eqnarray*}
\alpha_{(p^{t-1})}&=&(e_{t-1}),\\
\alpha^{(p^{t-1})}&=&(\beta,(0),\ldots,(0)).
\end{eqnarray*}
Notice that $(e_{t-1})$ is a $p^{t-1}$-core since $e_{t-1}<p^{t-1}$. As the
$p$-adic decomposition of $d_{t-1}=\lfloor n/p^{t-1}\rfloor$ is
$a_kp^{k-t+1}+\ldots+a_{t-1}$, we have by Theorem \ref{c5} and Lemma \ref{l1}
that $p\mid\deg(\chi^\alpha)$. As $(c_1,\ldots,c_h)$ is $p$-vanishing, applying
Lemmas \ref{l3} and \ref{c6} we have that
\[0=\chi^\alpha_{(c_1,\ldots,c_h)}=\pm\chi^{(e_{t-1})}_{(c_{l+1},\ldots,c_h)}\chi^{
(\beta,(0),\ldots,(0))}_{(f_1,\ldots,f_l)}=\pm\chi^\beta_{(f_1,\ldots,f_l)}\]
and so $\chi^\beta_{(f_1,\ldots,f_l)}=0$. As this holds for every $\beta\vdash
d_{t-1}$ with $p\mid\deg(\chi^\beta)$ it follows that $(f_1,\ldots,f_l)$ is
$p$-vanishing. As $d_{t-1}=d_tp+a_{t-1}$ and we are assuming that Conjecture
\ref{c4} holds we have that $\sum_{f_i<a_{t-1}}f_i\leq a_{t-1}$. From Lemma
\ref{l23} applied to $n'=d_{t-1}=d_tp+a_{t-1}$, $t'=1$ and $(f_1,\ldots,f_l)$ it
then follows that $\sum_{f_i\geq p}f_i\geq d_t$. So Equation \eqref{eq2} follows
from Equation \eqref{eq3}.

By induction and Theorem \ref{t17} we have that
\[\sum_{i:c_i\geq p^t}c_i=d_tp^t\]
for each $t\geq 0$. So, by Theorem \ref{t15} we have that, for $t\geq 0$,
\[\sum_{{i:p^t|c_i,}\atop{p^{t+1}\centernot|c_i}}c_i=\sum_{i:c_i\geq
p^t}c_i-\sum_{i:c_i\geq p^{t+1}}c_i=a_tp^t\]
and then $(c_1,\ldots,c_h)$ is of $p$-adic type.
\end{proof}

\section{Proof of Theorem \ref{t15}}\label{s1}

We restate here Theorem \ref{t15} and then prove it.

\begin{thm}
Let $t\geq 0$ and $(c_1,\ldots,c_h)$ be $p$-vanishing with $c_h>0$. If
$\sum_{c_j\geq p^t}c_j=d_tp^t$ then $c_j$ is a multiple of $p^t$ whenever
$c_j\geq p^t$.
\end{thm}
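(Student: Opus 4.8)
The plan is to argue by contradiction, producing a single $p$-singular character that does not vanish on $(c_1,\ldots,c_h)$ by means of Lemma~\ref{l3} with $r=p^t$. The case $d_t=0$ is vacuous (then $n<p^t$, so there are no parts $\ge p^t$), so assume $d_t\ge 1$ and suppose for contradiction that some part $\ge p^t$ is not divisible by $p^t$. Put $M:=\sum_{p^t\mid c_i}c_i$ and $w:=M/p^t$. Every positive part smaller than $p^t$ is a non-multiple of $p^t$, and by assumption at least one part $\ge p^t$ is also a non-multiple, so $M<d_tp^t$ and hence $w<d_t$ is a non-negative integer; set $R:=n-M$. Let $\gamma:=(c_i/p^t:\,p^t\mid c_i)\vdash w$ and let $\lambda$ be the partition consisting of all the parts $c_i$ that are not divisible by $p^t$, so $\lambda\vdash R$.

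Now suppose we can find a $p^t$-core $\mu$ with $|\mu|=R$, and let $\alpha$ be the unique partition with $\alpha_{(p^t)}=\mu$ and $\alpha^{(p^t)}=((w),(0),\ldots,(0))$. Then $w_{p^t}(\alpha)=w\ne d_t$, so $\chi^\alpha$ is $p$-singular by Corollary~\ref{c7}, no matter what $\mu$ is. Since a $p^t$-core admits no rim hook of length divisible by $p^t$ (Lemma~\ref{l1}), to get a non-zero value when evaluating $\chi^\alpha$ on $(c_1,\ldots,c_h)$ by Lemma~\ref{l3} we must route every cycle of length divisible by $p^t$ through the quotient, i.e.\ take the $\pi$-part to have cycle type $p^t\gamma$ and the $\rho$-part to have cycle type $\lambda$. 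As $\chi^{((w),(0),\ldots,(0))}_\gamma=\chi^{(w)}_\gamma=1$ (the trivial character of $S_w$), Lemma~\ref{l3} gives
\[\chi^\alpha_{(c_1,\ldots,c_h)}=\delta_{p^t}(\alpha)\,\chi^{\mu}_{\lambda}\,\chi^{(w)}_{\gamma}=\pm\,\chi^{\mu}_{\lambda}.\]
Hence as soon as $\chi^{\mu}_{\lambda}\ne 0$ we obtain a $p$-singular character not vanishing on $(c_1,\ldots,c_h)$, contradicting that this class is $p$-vanishing.

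The whole weight of the argument therefore falls on the last step: constructing a $p^t$-core $\mu\vdash R$ with $\chi^{\mu}_{\lambda}\ne 0$. When $R<2p^t$ one may take $\mu$ to be a hook $(a,1^b)$ that is a $p^t$-core (which forces $a\le p^t$ and $b\le p^t-1$) and pick the leg length $b$ so that the coefficient of $t^b$ in $(1+t)^{-1}\prod_i(1-(-t)^{\lambda_i})$ is non-zero, using the Murnaghan--Nakayama computation already carried out in the proof of Lemma~\ref{l23}. For larger $R$ the core is genuinely not a hook and has to be built by hand, for instance by adding the parts of $\lambda$ in decreasing order as horizontal rim hooks along a long first row (where every leg length is $0$, so no signs can cancel) while steering the abacus into a $p^t$-core configuration. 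I expect this to be the main obstacle, for two reasons. First, one must simultaneously secure the $p^t$-core property and a non-zero signed count. Second, and more seriously, for small $p^t$ a $p^t$-core of size exactly $R$ need not exist at all: already for $p^t=2$ the $2$-cores are the staircases, whose sizes are only the triangular numbers, so the rigid choice $\alpha^{(p^t)}=((w),(0),\ldots,(0))$ can fail. In such cases one keeps the same contradiction scheme but applies Lemma~\ref{l3} with an $r$ better adapted to the parts actually occurring (or with a non-trivial quotient), still extracting $p$-singularity from a weight mismatch via Corollary~\ref{c7}. The degenerate situations $e_t=0$, $w=0$ and $t$ large should finally be checked by hand, which is routine once the generic construction is available.
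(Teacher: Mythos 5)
Your reduction is sound as far as it goes, and it is a genuinely different strategy from the paper's: setting $M:=\sum_{p^t\mid c_i}c_i$, $w:=M/p^t$, one indeed gets $w<d_t$, so any $\alpha$ with $\alpha_{(p^t)}=\mu\vdash R=n-M$ and $\alpha^{(p^t)}=((w),(0),\ldots,(0))$ is $p$-singular by Corollary \ref{c7}, and Lemma \ref{l3} gives $\chi^\alpha_{(c_1,\ldots,c_h)}=\pm\chi^{\mu}_{\lambda}$ exactly as you claim. But after this reduction the entire content of the theorem is concentrated in the one step you do not carry out: producing a $p^t$-core $\mu$ of size $R$ with $\chi^{\mu}_{\lambda}\neq 0$. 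This is a genuine gap, not a routine verification. As you concede, such a $\mu$ need not exist at all: for $p^t=2$ the $2$-cores are staircases, whose sizes are triangular numbers, and there is no $3$-core of size $3$, so existence already fails for infinitely many $R$ even before one asks for $\chi^\mu_\lambda\neq0$. Moreover the failure is not confined to small $p^t$ or to degenerate cases: whenever $e_t=0$ one has $p^t\mid R$ (since $R\equiv n\bmod p^t$), so $h_{1,1}^\mu=R$ rules out every hook, and your recipe for $R<2p^t$ breaks precisely in one of the main cases. The proposed fallback (``apply Lemma \ref{l3} with an $r$ better adapted to the parts'' or ``with a non-trivial quotient'') is a restatement of the problem rather than an argument, so the proposal does not constitute a proof.

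The paper avoids this existence problem entirely by never prescribing the core and quotient of $\alpha$ abstractly. It fixes $l$ maximal with $c_l\geq p^t$ and $p^t\nmid c_l$, writes $c_l=cp^t+f$ with $1\leq f<p^t$, and takes an explicit near-hook partition depending on how $f$ compares with $e_t$: namely $\alpha=(n-c_l,2,1^{c_l-2})$ if $e_t=0$, $\alpha=(n-c_l,1^{c_l})$ if $e_t\leq f$, and $\alpha=(n-c_l,e_t,1^{c_l-e_t})$ if $1\leq f<e_t$. For each of these it verifies $w_{p^t}(\alpha)<d_t$ by a direct hook-length computation (so Corollary \ref{c7} applies), and then evaluates $\chi^\alpha$ on the class $(c_1,\ldots,c_h)$ itself by the Murnaghan--Nakayama rule, obtaining values such as $\pm(l-s+1)\neq 0$; the factorization of Lemma \ref{l3} is never needed for the nonvanishing. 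If you want to salvage your approach, you would need to prove a nontrivial combinatorial statement of the shape: for every partition $\lambda$ of $R$ with no part divisible by $p^t$ (and containing a part $>p^t$), some $p^t$-core of size $R$ supports a nonzero character value on $\lambda$ --- and, as the $2$-core and $3$-core examples show, this statement is simply false as stated, so the construction itself must change.
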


\begin{proof}
If $d_t=0$, then the theorem clearly holds, as then $n<p^t$ and so in this case all
part of $(c_1,\ldots,c_h)$ are smaller than $p^t$. So we can assume that
$d_t>0$. Also we can assume that $p^t>1$.

Let $m:=\sum_{c_j\geq p^t}c_j$ and assume that $m=d_tp^t$ and that there exists $j$
for which $c_j\geq p^t$ but $c_j$ is not a multiple of $p^t$. We will show
that in this case $(c_1,\ldots,c_h)$ is not $p$-vanishing, giving a
contradiction with the assumptions. Let $l$ be maximal such that $c_l\geq p^t$
and $p^t\nmid c_l$. Since $p^t\mid m$, $p^t\nmid c_l$ and $c_l\geq p^t$ there
must exists by definition of $l$ and $m$ some $1\leq l'<l$ with $p^t\nmid
c_{l'}$. Since $c_{l'}\geq c_l$ as $(c_1,\ldots,c_h)$ is a partition it follows
that $d_tp^t=m\geq c_{l'}+c_l\geq 2c_l$.  Also let $s\geq 1$ minimal such that
$c_s=c_l$. Write $c_l=cp^t+f$ with $1\leq f<p^t$. By definition of $l$ and since
$m=d_tp^t$ it follows that $r:=\sum_{j>l}c_j\equiv e_t\mbox{ mod }p^t$. The
proof of this theorem will be divided in the following cases:
\begin{enumerate}
\item
$e_t=0$,

\item
$e_t>0$ and $e_t\leq f<p^t$,

\item
$e_t>0$ and $1\leq f<e_t$.
\end{enumerate}
These three cases cover all possibilities, since by assumption $p^t\nmid c_l$.
We will now study in turn the above cases, by showing that in each one of them we get a
contradiction with $(c_1,\ldots,c_h)$ being $p$-vanishing.
\begin{enumerate}
\item
In this case let $\alpha:=(n-c_l,2,1^{c_l-2})$. This is a partition of $n$ since
$n\geq 2c_l$ and $c_l>p^t$. We will first show that $p\mid\deg(\chi^\alpha)$. In
order to do this we will first show that $w_{p^t}(\alpha)<d_t$ and since $e_t=0$
to prove this it is enough to prove that the $p^t$-core is not equal to $(0)$.
Notice that since $e_t=0$ and $c_l=cp^t+f$ we have that $n-c_l\equiv p^t-f\mbox{
mod }p^t$. Also $1\leq p^t-f<p^t$. From the definition of $\alpha$ it follows that
\begin{eqnarray*}
\alpha_{(p^t)}&=&\left\{\begin{array}{ll}
(p^t-f,2,1^{f-2})_{(p^t)}&\mbox{if }f\not\in\{1,p^t-1\},\\
(p^t-f,2,1^{p^t+f-2})_{(p^t)}&\mbox{if }1=f\not=p^t-1,\\
(2p^t-f,2,1^{f-2})_{(p^t)}&\mbox{if }1\not=f=p^t-1,\\
(2p^t-f,2,1^{p^t+f-2})_{(p^t)}&\mbox{if }1=f=p^t-1
\end{array}\right.\\
&=&\left\{\begin{array}{ll}
(p^t-f,2,1^{f-2})_{(p^t)}&\mbox{if }f\not\in\{1,p^t-1\},\\
(p^t-1,2,1^{p^t-1})_{(p^t)}&\mbox{if }1=f\not=p^t-1,\\
(p^t+1,2,1^{p^t-3})_{(p^t)}&\mbox{if }1\not=f=p^t-1,\\
(p^t+1,2,1^{p^t-1})_{(p^t)}&\mbox{if }1=f=p^t-1.
\end{array}\right.
\end{eqnarray*}
If $1=f=p^t-1$, then $p^t=2$ and $\alpha_{(2)}=(3,2,1)_{(2)}=(3,2,1)$. In
particular in this case $w_{p^t}(\alpha)<d_t$. Also since $p^t>1$,
\begin{eqnarray*}
h_{1,1}^{(p^t-f,2,1^{f-2})}&=&p^t-1,\\
h_{1,1}^{(p^t-1,2,1^{p^t-1})}&=&2p^t-1,\\
h_{1,2}^{(p^t-1,2,1^{p^t-1})}&=&p^t-1,\\
h_{2,1}^{(p^t-1,2,1^{p^t-1})}&=&p^t+1,\\
h_{2,2}^{(p^t-1,2,1^{p^t-1})}&=&1,\\
h_{3,1}^{(p^t-1,2,1^{p^t-1})}&=&p^t-1
\end{eqnarray*}
and $(p^t-1,2,1^{p^t-1})$ and $(p^t+1,2,1^{p^t-3})$ are conjugate to each other,
it follows that $w_{p^t}(\alpha)<d_t$ also in the other cases. It then follows
from Corollary \ref{c7} that $p\mid\deg(\chi^\alpha)$.

We will now show that $\chi^\alpha_{(c_1,\ldots,c_h)}\not=0$, which will give a
contradiction with the assumption of $(c_1,\ldots,c_h)$ being $p$-vanishing.
Notice that in this case $p^t\mid r$. Assume first that $l<h$. Then $r\geq
p^t\geq 2$ and
\begin{eqnarray*}
h_{1,2}^{(r,2,1^{c_l-2})}&=&r\not=c_l,\\
h_{1,3}^{(r,2,1^{c_l-2})}&=&r-2
\end{eqnarray*}
(the last equation holding only if $r\geq 3$). So, from the Murnaghan-Nakayama formula,
\begin{eqnarray*}
\chi^\alpha_{(c_1,\ldots,c_h)}&=&\chi^{(r+(l-s)c_l,2,1^{c_l-2})}_{(c_s,\ldots,
c_h)}\\
&=&(l-s)(-1)^{c_l-2}\chi^{(r+c_l)}_{(c_l,\ldots,c_h)}+\chi^{(r,2,1^{c_l-2})}_{
(c_l,\ldots,c_h)}\\
&=&(l-s+1)(-1)^{c_l-2}\chi^{(r)}_{(c_{l+1},\ldots,c_h)}+\delta_{r\geq
c_l+2}\chi^{(r-c_l,2,1^{c_l-2})}_{(c_{l+1},\ldots,c_h)}\\
&=&(l-s+1)(-1)^{c_l-2}+\delta_{r\geq
c_l+2}\chi^{(r-c_l,2,1^{c_l-2})}_{(c_{l+1},\ldots,c_h)}
\end{eqnarray*}
where $\delta_{x\geq y}=1$ if $x\geq y$ and $\delta_{x\geq y}=0$ otherwise.
Since $r\equiv n\mbox{ mod }p^t$ it follows that, if $r\geq c_l+2$, then
$(r-c_l,2,1^{c_l-2})_{(p^t)}=\alpha_{(p^t)}\not=(0)$ and so, since $p^t\mid r$
we have that $w_{p^t}((r-c_l,2,1^{c_l-2}))<r/p^t$. Also since in this case $m=n$
we have that $c_j\geq p^t$ for $j\leq h$ and then by maximality of $l$ that
$p^t\mid c_j$ for $l<j\leq h$. From repeated application of Corollary \ref{c1}
it then follows that $\chi^{(r-c_l,2,1^{c_l-2})}_{(c_{l+1},\ldots,c_h)}=0$. In
particular, since $s\leq l$, if $l<h$ we have that
$\chi^\alpha_{(c_1,\ldots,c_h)}\not=0$.

Assume now that $l=h$. First assume that $s=l$. Then since $c_j>c_l$ for $1\leq
j<l$ we have from the Murnaghan-Nakayama formula that
\[\chi^\alpha_{(c_1,\ldots,c_h)}=\chi^{(c_{l-1},2,1^{c_l-2})}_{(c_{l-1},c_l)}
=-\chi^{(1^{c_l})}_{(c_l)}=(-1)^{c_l}\not=0.\]
If $s<l$, since $c_j=c_l$ for $s\leq j\leq l$ and since $c_l>1$, then
\begin{eqnarray*}
\chi^\alpha_{(c_1,\ldots,c_h)}&=&\chi^{((s-l)c_l,2,1^{c_l-2})}_{(c_l^{l-s+1})}\\
&=&(l-s-1)(-1)^{c_l-2}\chi^{(2c_l)}_{(c_l^2)}+\chi^{(c_l,2,1^{c_l-2})}_{(c_l^2)}
\\
&=&(l-s-1)(-1)^{c_l}-\chi^{(1^{c_l})}_{(c_l)}+(-1)^{c_l-2}\chi^{(c_l)}_{(c_l)}\\
&=&(l-s+1)(-1)^{c_l}\not=0.
\end{eqnarray*}

As $p\mid\deg(\chi^\alpha)$ and $\chi^\alpha_{(c_1,\ldots,c_h)}\not=0$ we have a
contradiction.

\item
In this case let $\alpha:=(n-c_l,1^{c_l})$. Notice that $\alpha$ is a partition of $n$
and that $(1,2)\in[\alpha]$ since $n\geq m\geq 2c_l$ and $c_l>p^t$. We will
first show that $p\mid\deg(\chi^\alpha)$. Let $g\equiv n-c_l-1\mbox{ mod }p^t$
with $0\leq g<p^t$. By definition $f\equiv c_l\mbox{ mod }p^t$ and $1\leq
f<p^t$. So $\alpha_{(p^t)}=(g+1,1^f)_{(p^t)}$. Since
\[h_{1,1}^{(g+1,1^f)}=f+g+1\equiv n\equiv e_t\not\equiv 0\mbox{ mod }p^t\]
and $0\leq f,g<p^t$ it follows that $\alpha_{(p^t)}=(g+1,1^f)$ and  by
assumption that
\[0\leq g=hp^t+e_t-1-f<hp^t.\]
So $h\geq 1$, that is $f+g+1>e_t$. In particular $w_{p^t}(\alpha)<d_t$ and so it
follows from Corollary \ref{c7} that $p\mid\deg(\chi^\alpha)$.

We will now show that $\chi^\alpha_{(c_1,\ldots,c_h)}\not=0$. Since $e_t>0$ and
$m=d_tp^t=n-e_t$, we have that $1\leq c_h<p^t$ and so $l<h$. Then $r>0$ and so
from the Murnaghan-Nakayama formula it follows that
\begin{eqnarray*}
\chi^\alpha_{(c_1,\ldots,c_h)}&=&\chi^{(r+(l-s)c_l,1^{c_l})}_{(c_s,\ldots,c_h)}
\\
&=&(l-s)(-1)^{c_l-1}\chi^{(r+c_l)}_{(c_l,\ldots,c_h)}+\chi^{(r,1^{c_l})}_{(c_l,
\ldots,c_h)}\\
&=&(l-s+1)(-1)^{c_l-1}\chi^{(r)}_{(c_{l+1},\ldots,c_h)}+\delta_{r>c_l}\chi^{
(r-c_l,1^{c_l})}_{(c_{l+1},\ldots,c_h)}\\
&=&(l-s+1)(-1)^{c_l-1}+\delta_{r\geq
c_l+1}\chi^{(r-c_l,1^{c_l})}_{(c_{l+1},\ldots,c_h)}.
\end{eqnarray*}
Since $l\geq s$ in order to prove that $\chi^\alpha_{(c_1,\ldots,c_h)}\not=0$ it
is enough to prove that, if $r>c_l$, then
$\chi^{(r-c_l,1^{c_l})}_{(c_{l+1},\ldots,c_h)}=0$. So assume that $r>c_l$. In
particular $r>p^t$. Since $m=d_tp^t$ and by definition of $l$, if we let $v$
be maximal such that $c_v\geq p^t$, then $l<v<h$ and $p^t\mid c_j$ for $j\leq v$.
Also we can write $r=wp^t+e_t$ with $w\geq 1$ and
\[\sum_{j>l:p^t|c_j}c_j=\sum_{j=l+1}^vc_j=wp^t,\]
as $\sum_{j>v}c_j=\sum_{c_j<p^t}c_j=n-m=e_t$. It is easy to see that
\[(r-c_l,1^{c_l})_{(p^t)}=\alpha_{(p^t)}=(g+1,1^f)\]
since $r-c_l\geq 1$ and $r\equiv n\mbox{ mod }p^t$. Since $f+g+1>e_t$ it follows
that $w_{p^t}((r-c_l,1^{c_l}))<w$ and then, from Corollary \ref{c1}, it follows
that $\chi^{(r-c_l,1^{c_l})}_{(c_{l+1},\ldots,c_h)}=0$ and so we have a
contradiction.

\item
Notice first that in this case $e_t\geq 2$, since $1\leq f<e_t$. In this case
let $\alpha:=(n-c_l,e_t,1^{c_l-e_t})$. Since $c_l\geq p^t>e_t\geq 2$ and
\[n-c_l\geq n-\sum_{j:c_j\geq p^t}c_j=n-d_tp^t=e_t\]
we have that $\alpha$ is a partition of $n$. We will show that $p$ divides the degree
of $\chi^\alpha$ and that $\chi^\alpha_{(c_1,\ldots,c_h)}\not=0$.

We will first prove that $|\alpha_{(p^t)}|\geq p^t$. It will then follow that
$w_{p^t}(\alpha)<d_t$ and so, from Corollary \ref{c7}, that the degree of
$\chi^\alpha$ is divisible by $p$. Write $n-c_l-e_t\equiv g\mbox{ mod }p$ and
$c_l-e_t\equiv h\mbox{ mod }p$ with $0\leq g,h<p^t$. It is clear that
\[\alpha_{(p^t)}=(g+e_t,e_t,1^h)_{(p^t)}.\]
Since $c_l\equiv f\mbox{ mod }p$ and $1\leq f<e_t<p^t$ we have that
\[g\equiv n-c_l-e_t\equiv -c_l\equiv -f\equiv p^t-f\mbox{ mod }p\]
and
\[h\equiv c_l-e_t\equiv f-e_t\equiv p^t+f-e_t\mbox{ mod }p.\]
As $0\leq p^t-f,p^t+f-e_t<p^t$ it then follows by assumption of $f$ and $e_t$
that $g=p^t-f$ and $h=p^t+f-e_t$. So
\[\alpha_{(p^t)}=(p^t-f+e_t,e_t,1^{p^t+f-e_t})_{(p^t)}.\]
Let $\beta:=(p^t-f+e_t,e_t,1^{p^t+f-e_t})$. As $1\leq f<e_t<p^t$ we have that
$(2,2),(3,1)\in[\beta]$. Also
\begin{eqnarray*}
h_{1,1}^\beta&=&2p^t+1,\\
h_{1,2}^\beta&=&p^t-f+e_t<2p^t,\\
h_{2,1}^\beta&=&p^t+f,\\
h_{2,2}^\beta&=&e_t<p^t,\\
h_{3,1}^\beta&=&p^t+f-e_t<p^t.
\end{eqnarray*}
So
\[A:=\{(i,j)\in[\beta]:p^t\mid h_{i,j}^\beta\}=\{(1,j)\in[\beta]:h_{1,j}=p^t\}\]
and then $w_{p^t}(\beta)=|A|\leq 1$. In particular
\[|\alpha_{(p^t)}|=|\beta_{(p^t)}|\geq 2p^t+e_t-p^t\geq p^t\]
and then $p\mid \deg(\chi^\alpha)$.

We will now prove that $\chi^\alpha_{(c_1,\ldots,c_h)}\not=0$. First notice that
by the first part of the proof of the theorem,
\[n-c_l-e_t=d_tp^t-c_l\geq c_l>0,\]
in particular $(1,e_t+1)\in[\alpha]$. As
\begin{eqnarray*}
h_{1,e_t+1}^\alpha&=&n-c_l-e_t=\sum_{j:c_j\geq
p^t}c_j-c_l\geq\sum_{j:c_j>c_l}c_j,\\
h_{2,1}^\alpha&=&c_l
\end{eqnarray*}
and as
\[n-c_l-\sum_{j:c_j>c_l}c_j=n-c_l-c_1-\ldots-c_{s-1}=(l-s)c_l+c_{l+1}
+\ldots+c_h\]
by definition of $s$, we have from the Murnaghan-Nakayama formula that
\[\chi^\alpha_{(c_1,\ldots,c_h)}=\chi^\gamma_{(c_l^{l-s+1},c_{l+1},\ldots,c_h)},
\]
where $\gamma:=((l-s)c_l+c_{l+1}+\ldots+c_h,e_t,1^{c_l-e_t})$. Since
\[c_{l+1}+\ldots+c_h\geq \sum_{j:c_j<p^t}c_j=n-\sum_{j:c_j\geq
p^t}c_j=n-d_tp^t=e_t\]
when removing from $\gamma$ a sequence of $(l-s)$ hooks of length $c_l$ we can
either remove all of them from the first row, or remove once the hook
corresponding to the node $(2,1)$ and all other hooks from the first row. So
\begin{eqnarray*}
\chi^\gamma_{(c_l^{l-s+1},c_{l+1},\ldots,c_h)}&=&\chi^{(c_{l+1}+\ldots+c_h,e_t,
1^{c_l-e_t})}_{(c_l,\ldots,c_h)}+(-1)^{c_l-e_t}(l-s)\chi^{(c_l+\ldots+c_h)}_{
(c_l,\ldots,c_h)}\\
&=&\chi^{(c_{l+1}+\ldots+c_h,e_t,1^{c_l-e_t})}_{(c_l,\ldots,c_h)}+(-1)^{c_l-e_t}
(l-s).
\end{eqnarray*}
Let $\delta:=(c_{l+1}+\ldots+c_h,e_t,1^{c_l-e_t})$ and let $v$ be maximal with
$c_v\geq p^t$. Since
\[c_{l+1}+\ldots+c_h=e_t+\sum_{j=l+1}^vc_j,\]
when removing from $\delta$ a sequence of hooks of lengths
$(c_{l+1},\ldots,c_v)$ we can either remove all of them from the first row,
obtaining $\epsilon:=(e_t^2,1^{c_l-e_t})$, which is always possible in a unique
way, or we can remove some of them from the rows below the first one and all the
other from the first one. As $h_{2,2}^\delta=e_t-1<p^t$ and as $p^t\mid c_j$ for
$l+1\leq j\leq v$, if we remove some of such hooks from rows below the first one,
then such hooks are removed from the first column and we obtain the partition
$\lambda:=(e_t+wp^t,e_t,1^{c_l-e_t-wp^t})$ for some $w\geq 1$ with
$c_l-e_t-wp^t\geq 0$. By assumptions on $c_l$ and $e_t$ we have that $w<c$.
We have
\begin{eqnarray*}
h_{1,1}^\lambda&=&c_l+1,\\
h_{1,2}^\lambda&=&e_t+wp^t<(w+1)p^t\leq cp^t<c_l,\\
h_{2,1}^\lambda&=&c_l-wp^t,
\end{eqnarray*}
in particular $\lambda$ does not have any $c_l$ hook. Since $e_t<c_l$, using the
Murnaghan-Nakayama formula it then follows that
\begin{eqnarray*}
\chi^\alpha_{(c_1,\ldots,c_h)}&=&\chi^\gamma_{(c_l^{l-s+1},c_{l+1},\ldots,c_h)}
\\
&=&\chi^\delta_{(c_l,\ldots,c_h)}+(-1)^{c_l-e_t}(l-s)\\
&=&\chi^\epsilon_{(c_l,c_{v+1},\ldots,c_h)}+(-1)^{c_l-e_t}(l-s)\\
&=&(-1)^{c_l-e_t}\chi^{(e_t)}_{(c_{v+1},\ldots,c_h)}+(-1)^{c_l-e_t}(l-s)\\
&=&(-1)^{c_l-e_t}(l-s+1)\\
&\not=&0.
\end{eqnarray*}
This gives a contradiction to $(c_1,\ldots,c_h)$ being $p$-vanishing by
assumption.
\end{enumerate}
\end{proof}

\section{Proof of Theorem \ref{t17}}\label{s2}

In order to prove Theorem \ref{t17} we will prove the following stronger version
of it.

\begin{theor}
Let $t\geq 0$ and let $(c_1,\ldots,c_h)$ be a partition of $n$. If
$(c_1,\ldots,c_h)$ is $p$-vanishing then $\sum_{c_j\geq p^t}c_j\leq d_tp^t$
unless $p=3$, $t=1$ and $n\equiv 2\mbox{ mod }3$.
\end{theor}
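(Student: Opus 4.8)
The plan is to argue by contradiction, producing a single $p$-singular character that does not vanish on the class. If $e_t=0$ then $\sum_{c_j\ge p^t}c_j\le n=d_tp^t$ trivially, and if $d_t=0$ there are no parts $\ge p^t$; so I may assume $d_t,e_t\ge 1$. Suppose $\sum_{c_j\ge p^t}c_j>d_tp^t$ and set $s:=\sum_{c_j<p^t}c_j=n-\sum_{c_j\ge p^t}c_j$, so that $0\le s<e_t<p^t$. The goal is then to exhibit $\alpha\vdash n$ with $p\mid\deg\chi^\alpha$ and $\chi^\alpha_{(c_1,\ldots,c_h)}\ne 0$, contradicting that $(c_1,\ldots,c_h)$ is $p$-vanishing. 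The $p$-singularity of $\alpha$ will be certified either by Corollary \ref{l16} (for hooks) or by a direct degree computation, and the non-vanishing of the value by the Murnaghan--Nakayama rule (Lemma \ref{l1}).

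First I would record the two character evaluations that drive everything, both obtained from the Murnaghan--Nakayama rule (equivalently, the characteristic map). For hooks,
\[\sum_{a\ge 0}\chi^{(n-a,1^a)}_{(c_1,\ldots,c_h)}\,x^a=\frac{\prod_{i=1}^h\bigl(1-(-x)^{c_i}\bigr)}{1+x},\]
and for two-row partitions, with $N(a):=\#\{A\subseteq\{1,\ldots,h\}:\sum_{i\in A}c_i=a\}$,
\[\chi^{(n-a,a)}_{(c_1,\ldots,c_h)}=N(a)-N(a-1).\]
The decisive feature is that each part $c_i\ge p^t$ contributes a factor of degree $\ge p^t$, so in degrees $a<p^t$ only the small parts matter: writing $N_{\mathrm{small}}$ for the analogous count over the parts $<p^t$, one has $N(a)=N_{\mathrm{small}}(a)$ for $a<p^t$, and the hook series reduces modulo $x^{p^t}$ to $\prod_{c_i<p^t}(1-(-x)^{c_i})/(1+x)$.

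Next I would pin down $p$-singularity in the relevant range. By Corollary \ref{l16} the hook $(n-e_t,1^{e_t})$ is $p$-singular, and applying Lucas' theorem to $\deg\chi^{(n-a,a)}=\binom{n}{a}-\binom{n}{a-1}$ gives, for $1\le a\le e_t$, the clean criterion $p\mid\deg\chi^{(n-a,a)}\iff\binom{e_t}{a}\equiv\binom{e_t}{a-1}\pmod p$. This settles $s=0$ at once: then every part is $\ge p^t$, the hook series yields $\chi^{(n-e_t,1^{e_t})}_{(c_1,\ldots,c_h)}=(-1)^{e_t}\ne 0$, and $(n-e_t,1^{e_t})$ is $p$-singular, a contradiction.

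The heart of the argument is the case $s\ge 1$, where the hook value vanishes (the factor $1+x$ forces $\prod_{c_i<p^t}(1-(-x)^{c_i})$ to vanish at $x=-1$). Here I would look for $a\in\{1,\ldots,e_t\}$ with $N_{\mathrm{small}}(a)\ne N_{\mathrm{small}}(a-1)$ (non-vanishing two-row value) and $\binom{e_t}{a}\equiv\binom{e_t}{a-1}\pmod p$ ($p$-singularity); the value is automatically nonzero at $a=s+1$ and at $a=c_h$ (the smallest part), so the problem becomes whether some admissible $a$ meets a repeat in the sequence $\binom{e_t}{0},\binom{e_t}{1},\ldots\pmod p$. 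When the two-row family leaves a gap I would supplement it with the near-hooks $(n-a,2,1^{a-2})$, whose values have an entirely analogous generating function via Giambelli's formula. The main obstacle, and the source of the exception, is exactly this existence question for small $p$ and small $t$: for $p=3$, $t=1$, $e_t=2$ (i.e.\ $n\equiv 2\bmod 3$) the residues $\binom{2}{0},\binom{2}{1},\binom{2}{2}\equiv 1,2,1\pmod 3$ have no two equal consecutive entries, so no two-row partition is $3$-singular, and the near-hooks fail as well — as they must, since partitions such as $(4,1)$ are genuinely $3$-vanishing there. The remaining labour is the base-$p$ digit analysis confirming that $p=3,t=1,e_t=2$ is the \emph{only} such obstruction, so that in every other case an admissible $a$ exists and forces a $p$-singular character not vanishing on $(c_1,\ldots,c_h)$.
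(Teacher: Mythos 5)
Your overall strategy (contradiction via one explicit $p$-singular character with nonzero value, singularity from core/degree arguments, values from Murnaghan--Nakayama) is the same in spirit as the paper's, and your reductions are sound as far as they go: the cases $d_t=0$, $e_t=0$ and $s=0$ match the paper's cases 1--2, and your Lucas criterion $p\mid\deg\chi^{(n-a,a)}\iff\binom{e_t}{a}\equiv\binom{e_t}{a-1}\pmod p$ for $a\leq e_t$ is correct. The genuine gap is in the heart of the argument ($s\geq 1$): the family of certifying characters you allow yourself --- hooks, two-row partitions $(n-a,a)$, and near-hooks $(n-a,2,1^{a-2})$ --- is provably too small, so the ``remaining labour'' you defer (showing $p=3$, $t=1$, $e_t=2$ is the \emph{only} obstruction) cannot be carried out. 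The problematic regime is $e_t=p^t-1$ with $p$ odd: your own criterion gives $\binom{p^t-1}{a}\equiv(-1)^a\pmod p$, so \emph{no} two-row character with $1\leq a\leq e_t$ is $p$-singular there. A concrete counterexample to your scheme: $p=5$, $n=14$, class $(12,2)$, so $t=1$, $e_t=4$, $s=2$, and $\sum_{c_j\geq 5}c_j=12>10=d_1p$. The hook values are the coefficients of $(1-x)(1-x^{12})$, nonzero only for $a\in\{0,1,12,13\}$, while the $5$-singular hooks are $a\in\{4,9\}$; the two-row values are nonzero only for $a\in\{1,2,3\}$, with degrees $13$, $77$, $273$, all prime to $5$; and among the near-hooks $(14-a,2,1^{a-2})$ the only ones with nonzero value on $(12,2)$ are $(12,2)$ and its conjugate $(2,2,1^{10})$ (any other near-hook dies because removing a $2$-rim-hook from it never leaves a hook partition, which is needed before the $12$-cycle can be removed), and both have degree $77$. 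So nothing in your family witnesses the (true) failure of $(12,2)$ to be $5$-vanishing, i.e.\ $e_t=p^t-1$, $p$ odd, is a second obstruction to your plan, not just $p=3$, $t=1$, $e_t=2$.

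This is precisely why the paper's proof in Section \ref{s2} splits off the cases $e_t=p^t-1$ (its cases 4--7) and uses partitions with a genuine column of $2$'s: $(d_tp^t+1,2^{m-d_tp^t},1^{n-2m+d_tp^t-1})$, $(d_tp^t+1,2^{n-m},1^{2m-n-d_tp^t-1})$ and $(d_tp^t,2^{e_t/2})$, whose $p$-singularity is certified by Corollary \ref{l4} (a $p^t$-weight argument), not by binomial congruences; it also uses $(d_tp^t-1,n-m+1,1^{m-d_tp^t})$ when $e_t\notin\{0,p^t-1\}$, which again lies outside your family. For the class $(12,2)$ above the paper's witness is $(10,2,2)$, of degree $1365=5\cdot 273$ and value $-1$. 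To repair your plan you would need to enlarge the family at least to such ``two-column'' partitions and develop singularity criteria for them; the two-row digit analysis cannot substitute for this. A further small slip: the two-row value at $a=c_h$ is not automatically nonzero --- if $c_h=1$ occurs exactly once among the small parts then $N(1)-N(0)=0$ --- though your claim at $a=s+1$ is correct.
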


\begin{proof}
If $d_t=0$, then $n<p^t$ and so the theorem clearly holds. So we will now assume
that $d_t>0$. The proof of the theorem will be divided in the following cases.
In most of the cases we will assume that $\sum_{c_j\geq p^t}c_j>d_tp^t$ and
prove that then $(c_1,\ldots,c_h)$ is not $p$-vanishing. For the rest of the
proof let $m:=\sum_{c_j\geq p^t}c_j$.
\begin{enumerate}
\item
$m\leq d_tp^t$.

\item
$e_t\not=0$ and $m=n$.

\item
$e_t\not\in\{0,p^t-1\}$ and $d_tp^t<m<n$.

\item
$e_t=p^t-1\not=0$, $d_tp^t<m<n$ and $n-m>m-d_tp^t$.

\item
$e_t=p^t-1\not=0$, $d_tp^t<m<n$ and $n-m<m-d_tp^t$.

\item
$3\leq e_t=p^t-1$, $d_tp^t<m<n$ and $n-m=m-d_tp^t$.

\item
$2\geq e_t=p^t-1\not=0$, $d_tp^t<m<n$ and $n-m=m-d_tp^t$.
\end{enumerate}

These cases cover all possibilities, since, if $e_t=0$, then $m\leq n=d_tp^t$. We
will now study each case in turn.

\begin{enumerate}
\item
In this case the theorem clearly holds.

\item
Let $\alpha:=(d_tp^t,1^{e_t})$. Then $p\mid\deg(\chi^\alpha)$ by Corollary
\ref{l16}. As $h_{2,1}^\alpha=e_t<p^t$ and $c_h\geq p^t$ as $m=n$, we have that
\[\chi^\alpha_{(c_1,\ldots,c_h)}=\chi^{(c_h-e_t,1^{e_t})}_{(c_h)}=(-1)^{e_t}
\not=0\]
and so $(c_1,\ldots,c_h)$ is not $p$-vanishing in this case.

\item
In this case let $\alpha:=(d_tp^t-1,n-m+1,1^{m-d_tp^t})$. 
As $d_tp^t<m<n$, as $d_t\geq 1$ and as $e_t\leq p^t-2$ by assumption we have
that
\[2\leq \alpha_2=n-m+1\leq n-d_tp^t=e^t\leq p^t-2\leq(d_tp^t-1)-1=\alpha_1-1.\]
It then follows that $\alpha$ is a partition of $n$. Since
\[h_{2,1}^\alpha=n-d_tp^t+1=e_t+1<p^t\]
and
\[d_tp^t<m=h_{1,1}^\alpha<(d_t+1)p^t,\]
if $(i,j)$ is a node of $\alpha$ and $p^t\mid h_{i,j}^\alpha$, then $i=1$ and
$j\geq 2$. As $h_{1,2}^\alpha=d_tp^t-1$ there exist at most $d_t-1$ such nodes
$(i,j)$ and so $w_{p^t}(\alpha)<d_t$. From Corollary \ref{c7} it follows that
$p\mid\deg(\chi^\alpha)$.

We have already proved that $\alpha_2<\alpha_1$. So $(1,\alpha_2+1)=(1,n-m+2)$
is a node of $\alpha$. Let $l$ be maximal such that $c_l\geq p^t$. Then
\[\sum_{j=l+1}^hc_j=n-\sum_{j=1}^lc_j=n-m=\alpha_2-1\]
and
\begin{eqnarray*}
h_{1,n-m+2}^\alpha&=&d_tp^t-1-(n-m+2)+1\\
&=&n-(n-m)-(n-d_tp^t)-2\\
&\geq &n-\sum_{j=l+1}^hc_j-e_t-2\\
&\geq&\sum_{j\leq l}c_j-p^t\\
&\geq&\sum_{j<l}c_j.
\end{eqnarray*}
As $h_{2,1}^\alpha<p^t$ and
\[h_{(1,1)}^{(c_l-m+d_tp^t-1,n-m+1,1^{m-d_tp^t})}=c_l,\]
it follows that
\[\chi^\alpha_{(c_1,\ldots,c_h)}=\chi^{(c_l-m+d_tp^t-1,n-m+1,1^{m-d_tp^t})}_{
(c_l,\ldots,c_h)}=(-1)^{m-d_tp^t+1}\chi^{(n-m)}_{(c_{l+1},\ldots,c_h)}=\pm 1\]
and so $(c_1,\ldots,c_h)$ is not $p$-vanishing in this case.

\item
Let now $\alpha:=(d_tp^t+1,2^{m-d_tp^t},1^{n-2m+d_tp^t-1})$. Since by assumption
$d_t>0$ and $n-m>m-d_tp^t>0$ we have that $\alpha$ is a partition of $n$. Also
since $d_tp^t<m<n$ we have that $e_t\geq 2$ and so $p^t\geq 3$. In particular
$\alpha_1\geq 4>2=\alpha_2$. As
\begin{eqnarray*}
h_{1,2}^\alpha&=&m>d_tp^t,\\
h_{1,3}^\alpha&=&d_tp^t-1,\\
h_{2,1}^\alpha&=&n-m<n-d_tp^t=e_t<p^t
\end{eqnarray*}
it follows from Corollary \ref{l4} that $p\mid\deg(\chi^\alpha)$.

Let $l$ be maximal such that $c_l\geq p^t$. From $h_{2,1}^\alpha<p^t$ it also
follows that whenever we recursively remove from $\alpha$ hooks of lengths
$(c_1,\ldots,c_l)$ then all removed hooks correspond to a node on the first row
of the respective partition. Also
\[\sum_{j=l+1}^hc_j=\sum_{j:c_j<p^t}c_j=n-m=\alpha_1'\]
and
\[\sum_{j=l}^hc_j=c_l+\alpha_1'\geq p^t+\alpha_1'=n-d_tp^t+1+\alpha_1'\geq
m-d_tp^t+1+\alpha_1'=\alpha_1'+\alpha_2'.\]
So it easily follows from the Murnaghan-Nakayama formula that
\[\chi^\alpha_{(c_1,\ldots,c_h)}=\chi^{(c_l+1-m+d_tp^t,2^{m-d_tp^t},1^{
n-2m+d_tp^t-1})}_{(c_l,\ldots,c_h)}=(-1)^{m-d_tp^t}\chi^{(1^{\alpha_1'})}_{(c_{
l+1},\ldots,c_h)}=\pm1\]
and then $(c_1,\ldots,c_h)$ is not $p$-vanishing.

\item
In this case let $\alpha:=(d_tp^t+1,2^{n-m},1^{2m-n-d_tp^t-1})$. Since by
assumption $0<n-m<m-d_tp^t$ and $d_t\geq 1$ we have that $\alpha$ is a partition
of $n$. Further as $p^t=e_t+1\geq 2$ we have that $\alpha_1\geq 3>2=\alpha_2$.
As
\begin{eqnarray*}
h_{1,2}^\alpha&=&d_tp^t+n-m>d_tp^t,\\
h_{1,3}^\alpha&=&d_tp^t-1,\\
h_{2,1}^\alpha&=&m-d_tp^t<n-d_tp^t=e_t<p^t
\end{eqnarray*}
we have that $p$ divides the degree of $\chi^\alpha$ from Corollary \ref{l4}.

Again let $l$ be maximal such that $c_l\geq p^t$. Then
\[\sum_{j=l+1}^hc_j=\sum_{j:c_j<p^t}c_j=n-m=\alpha_2'-1\]
and
\[\sum_{j=l}^hc_j=c_l+\alpha_1'\geq p^t+\alpha_2'-1=n-d_tp^t+\alpha_2'\geq
m-d_tp^t+\alpha_1'=\alpha_1'+\alpha_2'\]
and so
\[\chi^\alpha_{(c_1,\ldots,c_h)}=\chi^{(c_l-\alpha_1'+1,2^{n-m},1^{2m-n-d_tp^t-1
})}_{(c_l,\ldots,c_h)}=(-1)^{\alpha_1'-1}\chi^{(1^{n-m})}_{(c_{l+1},\ldots,c_h)}
=\pm1.\]
In particular also in this case $(c_1,\ldots,c_h)$ is not $p$-vanishing.

\item
Notice that
\[e_t=n-d_tp^t=2(n-m)\]
is even and so by assumption $e_t\geq 4$. As $p^t=e_t+1$ we also have that
$t\geq 1$ and $p$ is odd. Let $\alpha:=(d_tp^t,2^{e_t/2})$. Since $e_t=p^t-1\geq
4$ we have that $(1,3)\in[\alpha]$ and
\begin{eqnarray*}
h_{1,2}^\alpha&=&d_tp^t+e_t/2-1>d_tp^t,\\
h_{1,3}^\alpha&=&d_tp^t-2,\\
h_{2,1}^\alpha&=&e_t/2+1<p^t
\end{eqnarray*}
and so $p\mid \deg(\chi^\alpha)$ by Corollary \ref{l4}. Let $l$ be maximal such
that $c_l\geq p^t$. Then
\[\sum_{j=l+1}^hc_j=\sum_{j:c_j<p^t}c_j=n-m=e_t/2\]
and
\[\sum_{j=l}^hc_j=c_l+e_t/2\geq p^t+2>e_t+2=\alpha_1'+\alpha_2'.\]
So it follows easily from the Murnaghan-Nakayama formula that
\[\chi^\alpha_{(c_1,\ldots,c_h)}=\chi^{(c_l-e_t/2,2^{e_t/2})}_{(c_l,\ldots,c_h)}
=(-1)^{e_t/2}\chi^{(1^{e_t/2})}_{(c_{l+1},\ldots,c_h)}=\pm 1.\]
In particular $(c_1,\ldots,c_h)$ is not $p$-vanishing.

\item
Also in this case we have that $e_t$ is even, since $e_t=n-d_tp^t=2(n-m)$. So by
assumption $p^t-1=e_t=2$ and then $p=3$, $t=1$ and
\[n\equiv e_t\equiv 2\mbox{ mod }3,\]
which is the exceptional case in the theorem.
\end{enumerate}
\end{proof}

\section{An additional theorem}\label{s4}

The theorem proved in this section is needed in order to prove Theorem
\ref{t19}. Since its proof is quite long we write it in a separate section.

\begin{lemma}\label{l5}
Assume that $p^t\mid n$ and that $\alpha$ is a partition of $n$. If $\alpha$ has
a $\beta$-set $\{x_1,\ldots,x_m\}$ with $m\geq 1$ and $p^t\nmid x_i$ for $1\leq
i\leq m$, then $p\mid \deg(\chi^\alpha)$.
\end{lemma}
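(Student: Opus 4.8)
The plan is to deduce $p$-singularity directly from Corollary~\ref{c7} by showing that the $p^t$-weight of $\alpha$ is strictly smaller than $d_t$. Since $p^t\mid n$ we have $e_t=0$ and $d_t=n/p^t$, and because each removal of a $p^t$-hook lowers the size by $p^t$ we obtain
\[
w_{p^t}(\alpha)=\frac{n-|\alpha_{(p^t)}|}{p^t}\leq\frac{n}{p^t}=d_t,
\]
with equality precisely when the $p^t$-core $\alpha_{(p^t)}$ equals $(0)$. Hence it suffices to show that the hypothesis on the $\beta$-set forces $\alpha_{(p^t)}\neq(0)$: then $w_{p^t}(\alpha)<d_t$ and Corollary~\ref{c7} yields that $\chi^\alpha$ is $p$-singular, i.e.\ $p\mid\deg(\chi^\alpha)$.

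To control the $p^t$-core I would pass to the abacus display of the $\beta$-set $\{x_1,\ldots,x_m\}$ on $p^t$ runners, where the bead in position $x_i$ lies on runner $x_i\bmod p^t$ in row $\lfloor x_i/p^t\rfloor$. The beads lying on runner $0$ are exactly those whose $\beta$-number is divisible by $p^t$, so the assumption $p^t\nmid x_i$ for all $i$ says precisely that runner $0$ carries no bead. Computing the $p^t$-core amounts to sliding every bead as far up its runner as possible; by the standard correspondence (Section~I.3 of \cite{b4}) this never moves a bead between runners, so runner $0$ stays empty. Consequently the $\beta$-set of $\alpha_{(p^t)}$, still of cardinality $m$, contains no multiple of $p^t$, and in particular does not contain $0$.

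Finally I would invoke the fact that the empty partition is the unique partition whose $\beta$-set on $m$ beads is $\{0,1,\ldots,m-1\}$, and that this set contains $0$ because $m\geq1$. Since the $\beta$-set of $\alpha_{(p^t)}$ misses $0$, it cannot equal $\{0,1,\ldots,m-1\}$, so $\alpha_{(p^t)}\neq(0)$ and thus $|\alpha_{(p^t)}|>0$. Combined with the displayed inequality this gives $w_{p^t}(\alpha)<d_t=n/p^t$, and Corollary~\ref{c7} completes the argument.

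The only delicate point I anticipate is the abacus bookkeeping: making rigorous that sliding beads up to form the $p^t$-core preserves each bead's runner, and that the emptiness of the $p^t$-core is detected exactly by the presence of the bead in position $0$. Once the standard dictionary between removing $p^t$-hooks and moving beads up one row is in place, every other step is routine, so no separate hard estimate is needed.
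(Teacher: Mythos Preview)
Your proposal is correct and follows essentially the same route as the paper: both reduce to showing $\alpha_{(p^t)}\neq(0)$ and then invoke Corollary~\ref{c7}, using that a $\beta$-set for the $p^t$-core is obtained from $\{x_1,\ldots,x_m\}$ by subtracting multiples of $p^t$ (your abacus formulation) so that it still avoids $0$, whence the core is nonempty since $m\geq1$.
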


\begin{proof}
From Corollary \ref{c7} and since $n=d_tp^t$ in this case it is enough to prove
that $\alpha_{(p^t)}\not=(0)$. Since $\{x_1,\ldots,x_m\}$ is a $\beta$-set for
$\alpha$, there exist $l_i$ for $1\leq i\leq m$ such that
$X:=\{x_1-l_1p^t,\ldots,x_m-l_mp^t\}$ is a $\beta$-set for $\alpha_{(p^t)}$. By
assumption no element of $X$ is divisible by $p^t$, in particular $0\not\in X$.
Since $|X|=m\geq 1$ we have that $\alpha_{(p^t)}\not=(0)$.
\end{proof}

The next theorem states that almost always the smallest part of a $p$-vanishing
partition is at least as large as the largest power of $p$ dividing $n$.

\begin{theor}\label{t13}
Let $(c_1,\ldots,c_h)\vdash n$, with $c_h\geq 1$, be $p$-vanishing and let
$p^t\mid n$. If $p^t\not\in\{2,3,4\}$, then $c_h\geq p^t$.

If $p^t\in\{2,3\}$ and $c_h<p^t$, then $c_h=1$.

If $p^t=4$ and $c_h<p^t$, then $(c_1,\ldots,c_h)$ is either $(2,1,1)$ or it ends
by $(f,2,1,1)$ with $f\geq 4$.
\end{theor}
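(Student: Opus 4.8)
The plan is to prove the contrapositive in the sharp form: if $(c_1,\ldots,c_h)\vdash n$ is $p$-vanishing with $p^t\mid n$ and smallest part $c_h<p^t$, then either $(c_1,\ldots,c_h)$ falls into one of the listed exceptional shapes, or we reach a contradiction by exhibiting a $p$-singular character that does not vanish on the class. Throughout we exploit that $p^t\mid n$ forces $e_t=0$ and $d_t=n/p^t$, so by Corollary \ref{c7} a character $\chi^\alpha$ is $p$-singular as soon as $\alpha_{(p^t)}\neq(0)$; Lemma \ref{l5} packages exactly this in terms of $\beta$-sets. A handful of small $n$ (in particular $d_t=1$, where $\alpha$ below can degenerate) are disposed of by direct inspection.

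First I would treat the generic range $2\le c_h<p^t$ with the witness $\alpha:=(n-c_h,2,1^{c_h-2})$. Its first-column $\beta$-set is $\{n-1,c_h\}\cup\{1,2,\ldots,c_h-2\}$, and every element lies in $\{n-1\}\cup\{1,\ldots,c_h\}$; since $0<c_h<p^t$ and $p^t\nmid(n-1)$, no element is divisible by $p^t$, so Lemma \ref{l5} gives that $\chi^\alpha$ is $p$-singular for every admissible $c_h$. This already shows that any exception must come from the vanishing of the character value, never from a failure of $p$-singularity. It then remains to evaluate $\chi^\alpha_{(c_1,\ldots,c_h)}$ by the Murnaghan--Nakayama rule. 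Because the second row of $\alpha$ has length $2$ and $h^\alpha_{2,1}=c_h$ equals the smallest part, removing the parts $c_1,\ldots,c_h$ keeps $\alpha$ of the shape $(x,2,1^y)$, $(x,1^y)$ or $(x)$ at every step, exactly as in Case 1 of the proof of Theorem \ref{t15}; iterating the rule, and using Corollary \ref{c1} to kill the tails built from parts divisible by $p^t$, reduces the value to $\pm(\nu+\varepsilon)$, where $\nu$ is the number of parts equal to $c_h$ and $\varepsilon$ is a controlled correction. One checks this is nonzero except for a short list of configurations; tracking those vanishing configurations is precisely where the exceptional shapes for $p^t=4$ arise and where $c_h=2$ is excluded for $p^t=3$.

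The remaining case is $c_h=1$, which must be handled separately: the shape above degenerates, and the naive hook $(n-1,1)$ has $\beta$-set $\{n,1\}$, hence $n$-divisible entry, empty $p^t$-core, and is not $p$-singular. Here I would instead use a two-row-type witness such as $\alpha=(n-2,2)$, whose $\beta$-set $\{n-1,2\}$ avoids multiples of $p^t$ whenever $p^t\neq 2$ (for $p^t=2$ the statement is vacuous), and run the Murnaghan--Nakayama rule against a class whose smallest parts are now $1$'s, again using Corollary \ref{c1} on the large parts, much as in the proof of Lemma \ref{l23}. The value is nonzero unless the class terminates in a very specific pattern near its foot, and pinning this down yields exactly the survivors $(2,1,1)$ and the partitions ending in $(f,2,1,1)$ with $f\ge4$ for $p^t=4$, while for $p^t\in\{2,3\}$ only $c_h=1$ survives and for $p^t\ge5$ nothing does.

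The main obstacle is the Murnaghan--Nakayama bookkeeping in the two preceding paragraphs: one must evaluate $\chi^\alpha_{(c_1,\ldots,c_h)}$ precisely enough to certify nonvanishing in all but the genuinely exceptional configurations, and, for $c_h=1$, choose a witness that is simultaneously $p$-singular and nonvanishing on the class. The $p$-singularity half is clean once the right $\beta$-set is written down and Lemma \ref{l5} is invoked; the delicate part is isolating the small exceptional shapes, for which I expect to compare the contributions of the (at most two) essentially different rim-hook removals at the foot of $\alpha$ and to repeatedly apply Corollary \ref{c1} to annihilate the contributions indexed by the parts divisible by $p^t$, exactly in the spirit of the proofs of Theorem \ref{t15} and Lemma \ref{l23}.
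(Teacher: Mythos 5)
Your overall strategy (contrapositive, certify $p$-singularity of a witness via Lemma \ref{l5}, then show the witness does not vanish on the class via Murnaghan--Nakayama) is the same as the paper's, and your $\beta$-set computation for $(n-c_h,2,1^{c_h-2})$ is correct. But there are two genuine problems. First, in the range $2\leq c_h<p^t$ your nonvanishing mechanism does not apply: you propose to ``use Corollary \ref{c1} to kill the tails built from parts divisible by $p^t$,'' as in Theorem \ref{t15}. In Theorem \ref{t15} that works because all remaining parts there are divisible by $p^t$; here the tail of the class consists of copies of $c_h<p^t$, and indeed no part of the class need be divisible by $p^t$ at all (e.g.\ $(5,5,2)\vdash 12$ with $p^t=4$), so Corollary \ref{c1} kills nothing and the correction terms must be computed outright. (The paper avoids this by taking the two-row witness $(n-c_h,c_h)$ instead, for which the recursion closes immediately with value $\pm1$ or $h-s+1\neq0$ --- in particular there are \emph{no} vanishing configurations in this range; your expectation that the $p^t=4$ exceptions arise here is also wrong, since $(2,1,1)$ and the $(f,2,1,1)$-endings all have $c_h=1$.)

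Second, and more seriously, the case $c_h=1$ --- which is where all the exceptional shapes and the restriction $p^t\in\{2,3,4\}$ live --- cannot be handled by the single witness $(n-2,2)$. That character provably vanishes on many classes that the theorem asserts are not $p$-vanishing: by Murnaghan--Nakayama, on a class ending in $(g,1,1,1)$ with $g\geq3$ the value reduces to $\chi^{(g+1,2)}_{(g,1,1,1)}$, and $(g+1,2)$ has no hook of length $g$ (its first-row hook lengths are $g+2,g+1,g-1,\ldots,1$ and its second-row hook lengths are $2,1$), so the value is $0$; the same failure occurs for endings $(g,2,1)$. The paper's proof of this case is a taxonomy of tail patterns matched with six different witnesses --- $(n-2,2)$, $(n-3,3)$, $(n-4,2,2)$, $(n-4,4)$, $(n-5,3,2)$, $(n-6,3,1,1,1)$ --- plus an ad hoc character $\chi^{(3,3,2)}$ for the single class $(4,2,1,1)$ with $n=8$, each witness chosen to be $p$-singular under the relevant lower bound on $p^t$ and nonzero on its family of endings, while carefully steering around the genuine exceptions $(2,1,1)$ and $(f,2,1,1)$ for $p^t=4$ (note that the applicability of some witnesses depends on whether $p^t\geq 4$ or $p^t\geq 5$, which is exactly how the exceptional list becomes $p^t$-dependent). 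None of this structure is present in your proposal; what you label ``the main obstacle'' of MN bookkeeping is in fact the entire content of the theorem in this case, and one witness cannot carry it.
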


\begin{proof}
The theorem is trivial if $p^t\leq 2$. So assume that $p^t\geq 3$. Also for
$n\leq 3$ the theorem is easy to prove by looking at the corresponding character
table, so we will now assume that $n\geq 4$.

Assume now that $(c_1,\ldots,c_h)$ is a partition of $n$ with $1\leq c_h<p^t$
and such that $(c_1,\ldots,c_h)$ is not in one of the special cases for
$p^t\in\{3,4\}$. We will prove that then $(c_1,\ldots,c_h)$ is not
$p$-vanishing. The proof will be divided in the following cases:
\begin{enumerate}
\item $2\leq c_h<p^t$.

\item $p^t\geq 4$ and $(c_1,\ldots,c_h)$ ends by $(1,1,1,1)$, $(2,1,1,1)$,
$(2,2,1)$, $(2,2,1,1)$, $(g,1)$ or $(g,1,1)$ with $g\geq 3$.

\item $p^t\geq 4$, $(c_1,\ldots,c_h)$ ends by $(3,2,1,1)$ or $(g,1,1,1)$ with
$g\geq 3$ but $(c_1,\ldots,c_h)\not=(3,3,1,1,1)$ and it doesn't end by
$(l,3,3,1,1,1)$.

\item $p^t\geq 4$ and $(c_1,\ldots,c_h)$ ends by $(g,2,1)$ with $g\geq 3$.

\item $p^t\geq 5$, $(c_1,\ldots,c_h)=(3,3,1,1,1)$ or it ends by $(g,3,3,1,1,1)$
or $(g,2,1,1)$ with $g\geq 4$, but $(c_1,\ldots,c_h)\not=(4,2,1,1)$ and it
doesn't end by $(l,4,2,1,1)$ with $l\geq 5$.

\item $p^t\geq 5$ and $(c_1,\ldots,c_h)=(4,2,1,1)$ or ends by $(g,4,2,1,1)$ with
$g\geq 5$.

\item $p^t=4$ and $(c_1,\ldots,c_h)=(3,3,1,1,1)$ or it ends by $(g,3,3,1,1,1)$
with $g\geq 4$.
\end{enumerate}
It can be easily checked that these cases cover all possibilities where
$c_h<p^t$ and which are not between the special cases listed for $p^t\in\{3,4\}$. We will now
study each case in turn.

\begin{enumerate}
\item
Let $\alpha:=(n-c_h,c_h)$. From $c_h<p^t$ and $p^t\mid n$ we have that $h\geq
2$. So $n\geq c_{h-1}+c_h\geq 2c_h$ and then $\alpha$ is a partition.

From $2\leq c_h<p^t$ it follows that $p^t$ divides neither $c_h$ nor
$n-c_h+1$. As $\{c_h,n-c_h+1\}$ is a $\beta$-set for $\alpha$ we have from Lemma
\ref{l5} that $p\mid\deg(\chi^\alpha)$.

We will now show that $\chi^\alpha_{(c_1,\ldots,c_h)}\not=0$. Let $s$ be minimal
with $c_s=c_h$. First assume that $s=h$. Then
\[\alpha_1=n-c_h\geq c_{h-1}>c_h=\alpha_2\]
and, since
\begin{eqnarray*}
h_{1,c_h+1}^\alpha&=&n-2c_h=c_1+\ldots+c_{h-1}-c_h>c_1+\ldots+c_{h-2},\\
h_{2,1}^\alpha&=&c_h<c_{h-1},
\end{eqnarray*}
we have that
\[\chi^\alpha_{(c_1,\ldots,c_h)}=\chi^{(c_{h-1},c_h)}_{(c_{h-1},c_h)}=-\chi^{
(c_h-1,1)}_{(c_h)}=1.\]

Assume now that $s<h$. In this case
\[h_{1,c_h+1}^\alpha=n-2c_h=c_1+\ldots+c_{h-1}-c_h\geq c_1+\ldots+c_{s-1}\]
if $(1,c_h+1)\in[\alpha]$, that is $h\geq 3$ (if $h=2$ then $s=1$, so that the
following also holds). So
\begin{eqnarray*}
\chi^\alpha_{(c_1,\ldots,c_h)}&=&\chi^{((h-s)c_h,c_h)}_{(c_h^{h-s+1})}\\
&=&(h-s-1)\chi^{(2c_h)}_{(c_h^2)}+\chi^{(c_h^2)}_{(c_h^2)}\\
&=&h-s-1+\chi^{(c_h)}_{(c_h)}-\chi^{(c_h-1,1)}_{(c_h)}\\
&=&h-s+1.
\end{eqnarray*}

In particular in either case $\chi^\alpha_{(c_1,\ldots,c_h)}\not=0$ and so
$(c_1,\ldots,c_h)$ is not $p$-vanishing.

\item
Write $(c_1,\ldots,c_h)=(c_1,\ldots,c_s,\beta_1,\ldots,\beta_r)$, with $\beta$ a
partition of the form $(1,1,1,1)$, $(2,1,1,1)$, $(2,2,1)$, $(2,2,1,1)$, $(g,1)$
or $(g,1,1)$ with $g\geq 3$. In this case let $\alpha:=(n-2,2)$. Since $n\geq 4$
it follows that $\alpha$ is a partition. Also from $p^t\nmid 2,n-1$ we have from
Lemma \ref{l5} that $p\mid\deg(\chi^\alpha)$. We will now prove that
$\chi^\alpha_{(c_1,\ldots,c_h)}\not=0$ if $(c_1,\ldots,c_h)$ ends by
$(1,1,1,1)$, $(2,1,1,1)$, $(2,2,1)$, $(2,2,1,1)$, $(g,1)$ or $(g,1,1)$ with
$g\geq 3$. As
\begin{eqnarray*}
h_{2,1}^\alpha&=&2,\\
h_{1,3}^\alpha&=&n-4
\end{eqnarray*}
(the last one only if $(1,3)\in[\alpha]$, that is $n\geq 5$), it follows from
the Murnaghan-Nakayama formula that
\[\chi^\alpha_{(c_1,\ldots,c_h)}=\chi^{(|\beta|-2,2)}_\beta.\]
For $\beta\in\{(1,1,1,1),(2,1,1,1),(2,2,1),(2,2,1,1)\}$ it is easily checked
that $\chi^{(|\beta|-2,2)}_\beta\not=0$. Also, for $g\geq 3$, we have that
\begin{eqnarray*}
\chi^{(g-1,2)}_{(g,1)}&=&-\chi^{(1)}_{(1)}=-1,\\
\chi^{(g,2)}_{(g,1,1)}&=&-\chi^{(1,1)}_{(1,1)}=-1.
\end{eqnarray*}
So $\chi^\alpha_{(c_1,\ldots,c_h)}\not=0$ and then $(c_1,\ldots,c_h)$ is not
$p$-vanishing.

\item
In this case write
$(c_1,\ldots,c_h)=(c_1,\ldots,c_s,3^w,\beta_1,\beta_2,\beta_3)$ with $c_s\geq 4$ or $(c_1,\ldots,c_h)=(3^w,\beta_1,\beta_2,\beta_3)$, where $\beta=(\beta_1,\beta_2,\beta_3)\in\{(2,1,1),(1,1,1)\}$. Since by assumption
$n\geq 6$ we have that $\alpha$ is a partition. Also since $p^t\nmid 3,n-2$
since $p^t\geq 4$ we have from Lemma \ref{l5} that $p\mid\deg(\chi^\alpha)$.

First assume that $w=0$. Then $(c_1,\ldots,c_h)=(c_1,\ldots,c_s,1,1,1)$ with
$s\geq 1$ by assumption. Since $h_{2,1}^{(n-3,3)}=3<c_s$ and $c_s+3>6$ it
follows from the Murnaghan-Nakayama formula that
\[\chi^\alpha_{(c_1,\ldots,c_h)}=\chi^{(c_s,3)}_{(c_s,1,1,1)}=-\chi^{(2,1)}_{(1,
1,1)}=-2.\]

Assume now that $w\geq 1$. Notice that $|\beta|\geq 3$. Then
\begin{eqnarray*}
\chi^\alpha_{(c_1,\ldots,c_h)}&=&\chi^{(3w+|\beta|-3,3)}_{(3^w,\beta_1,\beta_2,
\beta_3)}\\
&=&(w-1)\chi^{(3+|\beta|)}_{(3,\beta_1,\beta_2,\beta_3)}+\chi^{(|\beta|,3)}_{(3,
\beta_1,\beta_2,\beta_3)}\\
&=&\left\{\begin{array}{ll}
w-2,&\beta=(1,1,1),\\
w,&\beta=(2,1,1).
\end{array}\right.\end{eqnarray*}
Since by assumption $w\not=2$ if $\beta=(1,1,1)$ also in this case
$\chi^\alpha_{(c_1,\ldots,c_h)}\not=0$.

In particular we again have that $(c_1,\ldots,c_h)$ is not $p$-vanishing.

\item
Write now $(c_1,\ldots,c_h)=(c_1,\ldots,c_s,3^w,2,1)$ with $c_s\geq 4$ or $(c_1,\ldots,c_h)=(3^w,2,1)$.
In this case let $\alpha:=(n-4,2,2)$. Since $|\alpha|\geq g+3\geq 6$ it follows
that $\alpha$ is a partition. Also as $\{n-2,3,2\}$ is a $\beta$-set for
$\alpha$ and $p^t\geq 4$ we have from Lemma \ref{l5} that
$p\mid\deg(\chi^\alpha)$.

Assume first that $w=0$. Then $(c_1,\ldots,c_h)=(c_1,\ldots,c_s,2,1)$ with $s\geq 1$. In particular $n\geq c_s+3\geq 7$, so
that $(1,3)\in[\alpha]$ and then
\begin{eqnarray*}
h_{3,1}^\alpha&=&c_1+c_s-1\geq c_1+\ldots+c_{s-1},\\
h_{2,1}^\alpha&=&3<c_s.
\end{eqnarray*}
So
\[\chi^\alpha_{(c_1,\ldots,c_h)}=\chi^{(c_s-1,2,2)}_{(c_s,2,1)}=\chi^{(1^3)}_{(2
,1)}=-1.\]

Assume now that $w\geq 1$. As above
\[\chi^\alpha_{(c_1,\ldots,c_h)}=\chi^{(3w-1,2,2)}_{(3^w,2,1)}=-(w-1)\chi^{(5,1)
}_{(3,2,1)}+\chi^{(2^3)}_{(3,2,1)}=-1.\]

So $\chi^\alpha_{(c_1,\ldots,c_h)}\not=0$ and then $(c_1,\ldots,c_h)$ is not
$p$-vanishing.

\item
In this case write
$(c_1,\ldots,c_h)=(c_1,\ldots,c_s,4^w,\beta_1,\ldots,\beta_r)$ with $c_s\geq 5$ or $(c_1,\ldots,c_h)=(4^w,\beta_1,\ldots,\beta_r)$, where $\beta\in\{(3,3,1,1,1),(2,1,1)\}$. Also let $\alpha:=(n-4,4)$.
By assumption $n\geq 8$, so that $\alpha$ is a partition. Also since $p^t\geq
5$, so that $p^t\nmid 4,n-3$, we have from Lemma \ref{l5} that
$p\mid\deg(\chi^\alpha)$.

Assume first that $\beta=(3,3,1,1,1)$. Since $\sum_{c_i\leq 4}c_i\geq 9$ and
$h_{2,1}^\alpha=4$, it follows from the Murnaghan-Nakayama formula that
\[\chi^\alpha_{(c_1,\ldots,c_h)}=\chi^{(4w+5,4)}_{(4^w,3,3,1,1,1)}=w\chi^{(9)}_{
(3,3,1,1,1)}+\chi^{(5,4)}_{(3,3,1,1,1)}=w+3\not=0.\]

Assume next that $\beta=(2,1,1)$ and that $w\geq 1$. Then $w\geq 2$ by
assumption. Also in this case $\sum_{c_i\leq 4}c_i\geq 8$ and so, similarly to
before,
\[\chi^\alpha_{(c_1,\ldots,c_h)}=\chi^{(4w,4)}_{(4^w,2,1,1)}=(w-1)\chi^{(8)}_{(4
,2,1,1)}+\chi^{(4,4)}_{(4,2,1,1)}=w-1\not=0.\]

At last assume that $\beta=(2,1,1)$ and $w=0$. Then $(c_1,\ldots,c_h)=(c_1,\ldots,c_s,\beta_1,\ldots,\beta_r)$ with $s\geq 1$. As $\sum_{i\geq
s}c_i\geq 9$ and $c_s>4$ we have that
\[\chi^\alpha_{(c_1,\ldots,c_h)}=\chi^{(c_s,4)}_{(c_s,2,1,1)}=-\chi^{(3,1)}_{(2,
1,1)}=-1.\]

It follows that $(c_1,\ldots,c_h)$ is not $p$-vanishing in each of the above
cases.

\item
Write now $(c_1,\ldots,c_h)=(c_1,\ldots,c_s,4,2,1,1)$ with $c_s\geq 5$ or $(c_1,\ldots,c_h)=(4,2,1,1)$.

If $(c_1,\ldots,c_h)=(4,2,1,1)$ then $n=8$ and so $p^t=8$, since $5\leq p^t\mid n$. Since
$\deg(\chi^{(3,3,2)})=42$ and $\chi^{(3,3,2)}_{(4,2,1,1)}=-2$ it follows that
$(4,2,1,1)$ is not 2-vanishing.

So assume now that $(c_1,\ldots,c_h)=(c_1,\ldots,c_s,4,2,1,1)$ with $s\geq 1$. In this case let
$\alpha:=(n-5,3,2)$. By assumption $n>8$, so that $\alpha$ is a partition. Also
since $p^t\geq 5$, so that $p^t\nmid 2,4,n-3$, we have from Lemma \ref{l5} that
$p\mid\deg(\chi^\alpha)$.

Since $n>8$, so that $(1,4)\in[\alpha]$ and
\begin{eqnarray*}
h_{1,4}^\alpha&=&n-8=c_1+\ldots+c_s,\\
h_{2,1}^\alpha&=&4<c_s
\end{eqnarray*}
we have that
\[\chi^\alpha_{(c_1,\ldots,c_h)}=\chi^{(3,3,2)}_{(4,2,1,1)}=-2\]
and then also in this case $(c_1,\ldots,c_h)$ is not $p$-vanishing.

\item
The case $(c_1,\ldots,c_h)=(3,3,1,1,1)$ cannot happen, since $p^t=4$. Write
$(c_1,\ldots,c_h)=(c_1\ldots,c_s,6^w,5^x,4^y,3,3,1,1,1)$ with $s=0$ or $c_s\geq
7$. By assumption $s+w+x+y\geq 1$. Let $\alpha:=(n-6,3,1,1,1)$. From $n>9$ it
follows that $\alpha$ is a partition. Using the hook formula we have that
\[\deg(\chi^\alpha)=\frac{\prod_{1\leq h\leq
n:h\not\in\{h_{1,j}^\alpha\}}h}{\prod_{(i,j)\in[\alpha]:i\geq
2}h_{i,j}^\alpha}=\frac{n(n-1)(n-3)(n-4)(n-5)(n-8)}{6\cdot 3\cdot 2\cdot 2\cdot
1\cdot 1}.\]
From $4=p^t\mid n$ it follows that $2\mid\deg(\chi^\alpha)$.

Since $n>9$ we also have that $(1,4)\in[\alpha]$. So
\begin{eqnarray*}
h_{1,4}^\alpha&=&c_1+\ldots+c_s+6w+5x+4y,\\
h_{2,1}^\alpha&=&6,\\
h_{2,2}^\alpha&=&2,\\
h_{3,1}^\alpha&=&3
\end{eqnarray*}
and then
\[\chi^\alpha_{(c_1,\ldots,c_h)}=\chi^{(3+6w,3,1,1,1)}_{(6^w,3,3,1,1,1)}=-w\chi^
{(9)}_{(3,3,1,1,1)}+\chi^{(3,3,1,1,1)}_{(3,3,1,1,1)}=-w-3\not=0.\]
In particular $(c_1,\ldots,c_h)$ is not 2-vanishing.
\end{enumerate}
\end{proof}

\section{Proof of Theorem \ref{t19}}\label{s3}

The next theorem is stronger than Theorem \ref{t19} and so proving it will also
prove Theorem \ref{t19}.

\begin{theor}
Let $p=2$ or $p=3$ and $t\geq 0$. If $(c_1,\ldots,c_h)\vdash n$ is $p$-vanishing, then we
have that $\sum_{c_j\geq p^t}c_i\geq d_tp^t$ if one of the following holds:
\begin{itemize}
\item
$p=2$ and $n$ is odd or $8\mid n$.

\item
$p=2$, $n\equiv 2\mbox{ mod }4$ and $t\not=1$.

\item
$p=2$, $n\equiv 4\mbox{ mod }8$ and $t\not=1,2$.

\item
$p=3$ and $n\equiv 0,1,2,4\mbox{ or }7\mbox{ mod }9$.

\item
$p=3$, $n\equiv 3,5,6\mbox{ or }8\mbox{ mod }9$ and $t\not=1$.
\end{itemize}
\end{theor}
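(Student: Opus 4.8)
The plan is to prove the equivalent inequality $\sum_{c_j<p^t}c_j\le e_t$ (note $\sum_{c_j\ge p^t}c_j=n-\sum_{c_j<p^t}c_j$ and $d_tp^t=n-e_t$), and to run a double induction: strong induction on $n$, and inside it an induction on $t$ whose base level is $t_0:=v_p(n)+1$, one above the $p$-adic valuation of $n$. Two already-proved facts carry most of the weight. When $e_t=0$, i.e. $p^t\mid n$, the claim is exactly ``$c_h\ge p^t$'', which is Theorem \ref{t13}; I would apply it once at the valuation level $v:=v_p(n)$ to get $c_h\ge p^v\ge p^t$ for \emph{all} $t\le v$ simultaneously, which is legitimate as long as $p^v\notin\{2,3,4\}$. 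The small leftovers $p^v\in\{2,3,4\}$ with $t\le v$ are precisely the omitted pairs ($p=2$ with $n\equiv2\bmod4$ or $n\equiv4\bmod8$, and $p=3$ with $n\equiv3,6\bmod9$), so nothing is owed there. When $e_t\ne0$, Lemma \ref{l23} reduces the goal to the sharper small-part bound $\sum_{c_j<e_t}c_j\le e_t$.

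For the inductive step $t>t_0$ I would combine the level $t-1$ statement (induction hypothesis, available since the hypotheses are so arranged that $(n,t-1)$ is covered whenever $(n,t)$ is) with Theorem \ref{t17} to force the equality $\sum_{c_j\ge p^{t-1}}c_j=d_{t-1}p^{t-1}$; here one uses that Theorem \ref{t17} is exception-free for $p=2$ and that its single exception $p=3$, $t-1=1$, $n\equiv2\bmod3$ does not occur under the carried hypotheses. Theorem \ref{t15} then makes every part $\ge p^{t-1}$ a multiple of $p^{t-1}$, so $(c_1,\dots,c_h)=(p^{t-1}f_1,\dots,p^{t-1}f_l,c_{l+1},\dots,c_h)$ with $(f_1,\dots,f_l)\vdash d_{t-1}$ and $c_{l+1}<p^{t-1}$. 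Exactly as in the proof of Theorem \ref{t3} — taking $\beta\vdash d_{t-1}$ with $p\mid\deg(\chi^\beta)$, forming $\alpha$ with $\alpha_{(p^{t-1})}=(e_{t-1})$ and $\alpha^{(p^{t-1})}=(\beta,(0),\dots,(0))$, checking $p\mid\deg(\chi^\alpha)$ by Theorem \ref{c5}, and peeling via Lemmas \ref{l3} and \ref{c6} — one shows $(f_1,\dots,f_l)$ is itself $p$-vanishing. Applying the level $1$ statement to the strictly smaller $p$-vanishing partition $(f_1,\dots,f_l)\vdash d_{t-1}$ (strong induction on $n$) gives $\sum_{f_i\ge p}f_i\ge d_tp$, and multiplying by $p^{t-1}$ yields $\sum_{c_j\ge p^t}c_j=p^{t-1}\sum_{f_i\ge p}f_i\ge d_tp^t$, closing the step.

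This pushes everything onto statements of a single shape: for a $p$-vanishing $(g_1,\dots,g_m)\vdash N$ one must bound $\sum_{g_i<\bar e}g_i\le\bar e$ with $\bar e=N\bmod p^{t'}$ (the base level $t'=t_0$ for the original, and $t'=1$ for each quotient). If $\bar e\le1$ this is immediate from Lemma \ref{l23}; if $\bar e=0$ it is Theorem \ref{t13} \emph{provided} the valuation $v_p(N)$ is large enough that $p^{v_p(N)}\notin\{2,3,4\}$. The remaining genuine work, when $0<\bar e<p^{t'}$ or when $\bar e=0$ but $v_p(N)$ is too small, is to establish the bound by the very method of Theorem \ref{t13}: assuming the small parts are too numerous, exhibit an explicit near-hook $\alpha$ of one of the shapes $(N-c,1^{c})$, $(N-c,2,1^{c-2})$ or $(N-c,e,1^{c-e})$ used in Sections \ref{s1}--\ref{s4}, verify $p\mid\deg(\chi^\alpha)$ through its $p^{t'}$-core via Corollary \ref{c7} and Lemma \ref{l5}, and strip the long parts off the first row by Murnaghan--Nakayama to obtain $\chi^\alpha_{(g_1,\dots,g_m)}\ne0$, contradicting $p$-vanishing. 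A recurring instance of this is the need to forbid \emph{repeated} powers $p^s$ among the parts (which would make a quotient equal to a bad partition such as $(1,1)$); ruling these out is itself one of the explicit-witness arguments carried out on the original partition.

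The hard part is precisely this witness construction, together with the verification that the pairs $(n,t)$ for which no witness exists are exactly those excluded from the statement. The delicacy lives at the smallest moduli: $p^{t'}\in\{2,3,4\}$ makes the degree-divisibility computation degenerate (the source of the $\{2,3,4\}$ proviso in Theorem \ref{t13}), and the residue $p=3$, $t'=1$, $N\equiv2\bmod3$ genuinely fails — for example $(2,1,1,1)$ is $3$-vanishing with three $1$'s — which forces the omission of those residues at $t=1$. Finally I would record the compatibility bookkeeping that every covered $(n,t)$ feeds the induction only from covered pairs $(n,t-1)$ and from base/quotient statements that are themselves covered or immediate; matching this coverage against the genuinely failing small cases is what ultimately pins down the precise list in the theorem.
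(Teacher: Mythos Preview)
Your overall architecture matches the paper's --- reduce via Lemma~\ref{l23} to bounding $\sum_{c_j<e_t}c_j\le e_t$, handle $e_t=0$ with Theorem~\ref{t13}, and climb from a base level by an inductive step on $t$ that passes through the $p^{t-1}$-quotient $(f_1,\dots,f_l)\vdash d_{t-1}$ --- but the mechanism you use to close the inductive step is different from the paper's and has a real gap.

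Your step is: show $(f_1,\dots,f_l)$ is $p$-vanishing, then apply the level~$1$ statement for the smaller integer $d_{t-1}$ by strong induction on $n$. For $p=2$ this is harmless, since $a_{t-1}\in\{0,1\}$ makes $\sum_{f_i<a_{t-1}}f_i\le a_{t-1}$ vacuous. For $p=3$ with $a_{t-1}=2$, however, you need the quotient to have at most two parts equal to $1$, and you cannot get this from the theorem by induction: the level~$1$ statement at $N=d_{t-1}$ is only asserted when $N\equiv 2\bmod 9$, while $d_{t-1}\bmod 9=3a_t+a_{t-1}$ lands in $\{5,8\}$ whenever $a_t\in\{1,2\}$. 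And in those residue classes the level~$1$ bound is genuinely false for $p$-vanishing partitions (your own example $(2,1,1,1)\vdash5$). So ``apply the theorem to the quotient'' does not close the step. The paper's Case~1 closes it differently: it does \emph{not} invoke the theorem for the quotient, but evaluates two specific $p$-singular characters of $S_n$ --- those with $p^{t-1}$-quotient $((d_tp,1^2),(0),\dots)$ and $((d_tp,1),(1),(0),\dots)$ --- on $(c_1,\dots,c_h)$, forcing $\sum_{f_j\le 2}f_j\le 2$ directly, independent of the residue of $d_{t-1}$ modulo~$9$.

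There is also a bookkeeping error in your coverage claim. For $p=3$, $n\equiv 5,8\bmod 9$ you have $v_3(n)=0$, so $t_0=1$; thus $t=2$ already belongs to the ``inductive step $t>t_0$'' range and would require the $t=1$ statement, which is excluded. The paper does not try to induct here: its Case~5 treats $t=2$, $n\equiv 5,8\bmod 9$ directly, using the contrapositive of the inductive step to force at least three $1$'s and then pushing a cascade of explicit characters $(n-2,1,1)$, $(n-4,2,1,1)$, $(n-5,3,2)$, \dots\ to a contradiction. The analogous base cases $t=2$ for $p=2,\ n\equiv 2\bmod 4$ and for $p=3,\ n\equiv 3,6\bmod 9$, and $t=3$ for $p=2,\ n\equiv 4\bmod 8$ (paper's Cases~3,~4,~6) are likewise handled by explicit character computations, not by the near-hook template you sketch. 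That explicit work is where the proof actually lives; your proposal gestures at it but does not supply it.
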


\begin{proof}
Notice that the theorem clearly holds for $t=0$, as in this case $p^t=1$. So we
will assume that $t\geq 1$. The theorem also clearly holds if $d_t=0$. So we
will assume that $d_t\not=0$. Let $(c_1,\ldots,c_h)$ be $p$-vanishing. We will
prove that then $\sum_{c_j\geq p^t}c_i\geq d_tp^t$ if we are in one of the cases
above. Using Lemma \ref{l23} it will be enough to prove that
$\sum_{c_j<e_t}c_j\leq e_t$.

The proof of the theorem will be divided in the following cases:

\begin{enumerate}
\item\label{i1}
$t\geq 2$ if $p=3$ and $n\equiv 2\mbox{ mod }3$ or $t\geq 1$ otherwise,
$p^t\nmid n$ and the theorem holds for $t-1$.

\item
$8\mid n$ if $p=2$ or $9\mid n$ if $n=3$ and $p^t\mid n$.

\item
$p=2$, $n\equiv 4\mbox{ mod }8$ and $t=3$.

\item
$p=2$, $n\equiv 2\mbox{ mod }4$ and $t=2$ or $p=3$, $n\equiv 2\mbox{ mod }9$ and
 $t=1$.

\item
$p=3$, $n\equiv 5\mbox{ or }8\mbox{ mod }9$ and $t=2$.

\item
$p=3$, $n\equiv 3\mbox{ or }6\mbox{ mod }9$ and $t=2$.

\end{enumerate}

We will now prove the result in each of the above cases.

\begin{enumerate}
\item
We can write $n=d_tp^t+a_{t-1}p^{t-1}+e_{t-1}$. By assumption on $t-1$ and by
Theorem \ref{t17} we have that
\[\sum_{i:c_i\geq p^{t-1}}c_i=d_tp^t+a_{t-1}p^{t-1}\]
and so, from Theorem \ref{t15}, that, for some $l\geq 0$,
\begin{equation}\label{eq5}
(c_1,\ldots,c_h)\vdash (p^{t-1}f_1,\ldots,p^{t-1}f_l,c_{l+1},\ldots,c_h)
\end{equation}
with $(f_1,\ldots,f_l)\vdash d_tp+a_{t-1}$ and $c_{l+1}<p^{t-1}$. Notice that
$(c_{l+1},\ldots,c_h)$ is a partition of $e_{t-1}$. Proving that
$\sum_{c_j<e_t}c_j\leq e_t$ is then equivalent to proving that
\[\sum_{p^{t-1}\leq c_j<e_t}c_j\leq e_t-e_{t-1}=a_{t-1}p^{t-1},\]
which in turn is equivalent to
\[\sum_{f_j<e_t/p^{t-1}}f_j\leq a_{t-1}\]
from Equation \eqref{eq5}. Since
\[e_t/p^{t-1}=a_{t-1}+e_{t-1}/p^{t-1}<a_{t-1}+1\]
it is enough to prove that
\[\sum_{f_j\leq a_{t-1}}f_j\leq a_{t-1}.\]
This clearly holds if $a_{t-1}=0$. So we can assume that $a_{t-1}>0$. Let
$\alpha$ be the partition with
\begin{eqnarray*}
\alpha_{(p^{t-1})}&=&(e_{t-1}),\\
\alpha^{(p^{t-1})}&=&((d_tp,1^{a_{t-1}}),(0),\ldots,(0)).
\end{eqnarray*}
Then $\alpha\vdash n$. From Lemma \ref{l22} and Corollary \ref{l16} applied to
both $\alpha$ and $(d_tp,1^{a_{t-1}})$ we have that $p\mid\deg(\chi^\alpha)$. By
assumption and from Lemma \ref{l3} we have that
\[0=\chi^\alpha_{(c_1,\ldots,c_h)}=\pm\chi^{\alpha^{(p^{t-1})}}_{(f_1,\ldots,
f_l)}\chi^{\alpha_{(p^{t-1})}}_{(c_{l+1},\ldots,c_h)}=\pm\chi^{(d_tp,1^{a_{t-1}}
)}_{(f_1,\ldots,f_l)}.\]
Notice that by assumption $1\leq a_{t-1}\leq 2$, since $1\leq a_{t-1}<p\leq 3$.

Assume first that $a_{t-1}=1$. Then
\[\chi^{(d_tp,1)}_{(f_1,\ldots,f_l)}=0\]
and so
\[\sum_{f_j\leq 1}f_j=\sum_{f_j=1}f_j=\chi^{(d_tp,1)}_{(f_1,\ldots,f_l)}+1=1,\]
since $\chi^{(x-1,1)}_{(x^{w_x},\ldots,1^{w_1})}=w_1-1$ for $x\geq 2$ (see
2.3.16 of \cite{b1}).

Assume now that $a_{t-1}=2$ and that $\sum_{f_j\leq 2}f_j>2$. We will show that
this give a contradiction. Write
\[(f_1,\ldots,f_l)=(f_1,\ldots,f_m,2^g,1^h)\]
with $m=0$ or $f_m\geq 3$. Then, since by assumption $2g+h\geq 3$,
\begin{eqnarray*}
0&=&\chi^{(d_tp,1^2)}_{(f_1,\ldots,f_l)}\\
&=&\chi^{(2g+h-2,1^2)}_{(2^g,1^h)}\\
&=&-g\chi^{(h)}_{(1^h)}+\delta_{h\geq 3}\chi^{(h-2,1^2)}_{(1^h)}\\
&=&-g+\delta_{h\geq 3}\frac{(h-1)(h-2)}{2}.
\end{eqnarray*}
In particular $h\geq 3$ since $2g+h\geq 3$. Let $\beta$ be given by
\begin{eqnarray*}
\beta_{(p^{t-1})}&=&(e_{t-1}),\\
\beta^{(p^{t-1})}&=&((d_tp,1),(1),(0),\ldots,(0)).
\end{eqnarray*}
Then $\beta\vdash n$ (in this case $p=3$ and $t-1\geq 2$, so that $p^{t-1}>1$).
Since we cannot remove $d_t$ hooks of length $p$ from $\beta^{(p^{t-1})}$ (there
are only $d_t-1$ nodes of $\beta^{(p^{t-1})}$ with hook length divisible by $p$)
we have from Lemmas \ref{l22} and \ref{l1} that $p\mid\deg(\chi^\beta)$. From
Lemma \ref{l3} we then have that
\[0=\chi^\alpha_{(c_1,\ldots,c_h)}=\pm\chi^{\alpha^{(p^{t-1})}}_{(f_1,\ldots,
f_l)}\chi^{\alpha_{(p^{t-1})}}_{(c_{l+1},\ldots,c_h)}=\pm\chi^{((d_tp,1),(1))}_{
(f_1,\ldots,f_l)}.\]
Since $h\geq 3$ we have that
\[0=\chi^{((d_tp,1),(1))}_{(f_1,\ldots,f_m,2^g,1^h)}=\chi^{((h-2,1),(1))}_{(1^h)
}\]
which gives a contradiction. In particular $2g+h\leq 2$ and so the result is
proved in this case too.

\item
In these cases the theorem follows from Theorem \ref{t13}.

\item
This case follows from Theorem \ref{t13}.

\item
We will show that in this case $\sum_{c_j=1}c_j\leq 2$. By assumption $d_t>0$,
so that $n>2$. As $n\equiv 2\mbox{ mod }4$ or $n\equiv 2\mbox{ mod }9$ we then
have that $n\geq 6$. In particular $(n-2,1,1)$, $(n-3,2,1)$ and $(n-3,1,1,1)$
are partitions. The degrees of $\chi^{(n-2,1,1)}$ and $\chi^{(n-3,1,1,1)}$ are
divisible by $p$ from Corollary \ref{l16}. Using the hook formula it can be
easily seen that the degree of $\chi^{(n-3,2,1)}$ is $n(n-2)(n-4)/3$ and then it
is divisible by $p$. Write $(c_1,\ldots,c_h)=(c_1,\ldots,c_l,3^c,2^b,1^a)$ with
$l=0$ or $c_l\geq 4$ and assume that $x\geq 3$. Then, as $(c_1,\ldots,c_h)$ is
$p$-vanishing we have that
\[0=\chi^{(n-2,1,1)}_{(c_1,\ldots,c_h)}=\chi^{(a+2b-2,1,1)}_{(2^b,1^a)}=-b+\chi^
{(a-2,1,1)}_{(1^a)}=-b+\frac{(a-1)(a-2)}{2}.\]
So $b=(a-1)(a-2)/2\geq 1$ and then $2b+a\geq 5$. In particular
\begin{eqnarray*}
\chi^{(n-3,1,1,1)}_{(c_1,\ldots,c_h)}&=&\chi^{(3c+2b+a-3,1,1,1)}_{(3^c,2^b,1^a)}
\\
&=&c\chi^{(a)}_{(1^a)}-b\chi^{(a-1,1)}_{(1^a)}+\delta_{a\geq
4}\chi^{(a-3,1,1,1)}_{(1^a)}\\
&=&c-b(a-1)+\frac{(a-1)(a-2)(a-3)}{6}
\end{eqnarray*}
and
\begin{eqnarray*}
\chi^{(n-3,2,1)}_{(c_1,\ldots,c_h)}&=&\chi^{(3c+2b+a-3,2,1)}_{(3^c,2^b,1^a)}\\
&=&-c\chi^{(2b+3a)}_{(2^b,1^a)}+\chi^{(2b+a-3,2,1)}_{(2^b,1^a)}\\
&=&-c+\chi^{(2b+a-3,2,1)}_{(2^b,1^a)},
\end{eqnarray*}
as
\begin{eqnarray*}
h_{1,2}^{(n-3,1,1,1)}&=&n-4\geq c_1+\ldots+c_l+3c,\\
h_{1,3}^{(n-3,2,1)}&=&n-5\geq c_1+\ldots+c_l+3c.
\end{eqnarray*}
As $a\geq 3$ and as $b=1$ if $a=3$ and $b=3$ if $a=4$, we have that
\[\chi^{(2b+a-3,2,1)}_{(2^b,1^a)}=\left\{\begin{array}{ll}
-1&\mbox{if }a=3\\
0&\mbox{if }a=4\\
\chi^{(a-3,2,1)}_{(1^a)}&\mbox{if }a\geq 5
\end{array}\right.\]
and then
\[\chi^{(2b+a-3,2,1)}_{(2^b,1^a)}=\frac{a(a-2)(a-4)}{3}.\]
As $(c_1,\ldots,c_h)$ is $p$-vanishing and $p$ divides the degrees of
$\chi^{(n-3,1,1,1)}$ and $\chi^{(n-3,2,1)}$, we have that $c=a(a-2)(a-4)/3$ and
that
\begin{eqnarray*}
0&=&c-b(a-1)+\frac{(a-1)(a-2)(a-3)}{6}\\
&=&\frac{a(a-2)(a-4)}{3}-\frac{(a-1)^2(a-2)}{2}+\frac{(a-1)(a-2)(a-3)}{6}\\
&=&-a(a-2)
\end{eqnarray*}
which gives a contradiction since we assumed that $a\geq 3$. So
$\sum_{c_j=1}c_j\leq 2$.

\item
Assume now that $p=3$, $n\equiv 5\mbox{ or }8\mbox{ mod }9$ and $t=2$. By
assumption on $d_t$ it follows that $n\geq 14$. In particular $(n-2,1,1)$,
$(n-4,2,1,1)$, $(n-5,3,2)$, $(n-4,2,2)$, $(n-6,3,3)$, $(n-5,1^5)$,
$(n-7,2^2,1^3)$ and $(n-7,2,1^5)$ are partitions.

If the result does not hold for $t=2$, then we have from Case \ref{i1} that the
result does not hold for $t=1$ either. In particular $\sum_{c_j<2}c_j>2$ and
then from Lemma \ref{l23} we have that $\sum_{c_j=1}c_j\geq 3$. So we can write
$(c_1,\ldots,c_h)=(c_1,\ldots,c_l,4^d,3^c,2^b,1^a)$, with $l=0$ or $c_l\geq 5$
and with $a\geq 3$. As in the previous case we have from Corollary \ref{l16}
that 3 divides the degree of $\chi^{(n-2,1,1)}$ and that
\begin{equation}\label{eq6}
0=\chi^{(n-2,1,1)}_{(c_1,\ldots,c_h)}=\chi^{(a+2b-2,1,1)}_{(2^b,1^a)}=-b+\frac{
(a-1)(a-2)}{2},
\end{equation}
so that $b=(a-1)(a-2)/2$.

From the hook formula we have that the degree of $\chi^{(n-4,2,1,1)}$ is
\[n(n-2)(n-3)(n-5)/8\]
and so it is divisible by 3.  For $a\geq 4$, we have that $b\geq 3$ and then
\[a+2b\geq 10>6=\sum_{i:(n-4,2,1,1)'_i>1}(n-4,2,1,1)'_i,\]
and as $(c_1,\ldots,c_h)$ is 3-vanishing we have that
\begin{eqnarray*}
0&=&\chi^{(n-4,2,1,1)}_{(c_1,\ldots,c_h)}\\
&=&\chi^{(a+2b+4d-4,2,1,1)}_{(4^d,2^b,1^a)}\\
&=&d\chi^{(a+2b)}_{(2^b,1^a)}+\chi^{(a+2b-4,2,1,1)}_{(2^b,1^a)}\\
&=&d+\chi^{(a+2b-4,2,1,1)}_{(2^b,1^a)}.
\end{eqnarray*}
For $a\geq 6$ we have that
\begin{eqnarray*}
\chi^{(a+2b-4,2,1,1)}_{(2^b,1^a)}&=&-\binom{b}{2}\chi^{(a)}_{(1^a)}-b\chi^{(a-2,
2)}_{(1^a)}+\chi^{(a-4,2,1,1)}_{(1^a)}\\
&=&-\frac{b(b-1)}{2}-\frac{ab(a-3)}{2}+\frac{a(a-2)(a-3)(a-5)}{8}\\
&=&-\frac{a^4-4a^3+a^2+6a}{4}
\end{eqnarray*}
since $b=(a-1)(a-2)/2$. For $4\leq a\leq 5$ we have that
\[\chi^{(a+2b-4,2,1,1)}_{(2^b,1^a)}=\left\{\begin{array}{ll}
-10&\mbox{if }a=4,\\
-45&\mbox{if }a=5.
\end{array}\right.\]
Assume now that $a=3$, so that $b=1$. If $d\geq 1$ then
\[\chi^{(n-4,2,1,1)}_{(c_1,\ldots,c_h)}=\chi^{(4d+1,2,1,1)}_{(4^d,2,1^3)}
=(d-1)\chi^{(9)}_{(4,2,1^3)}+\chi^{(5,2,1,1)}_{(4,2,1^3)}=d\not=0\]
which brings a contradiction with $\chi^{(n-4,2,1,1)}_{(c_1,\ldots,c_h)}=0$ by
assumption.

So
\[d=\left\{\begin{array}{ll}
0&\mbox{if } a=3\\
10&\mbox{if } a=4\\
45&\mbox{if } a=5\\
\frac{a^4-4a^3+a^2+6a}{4}&\mbox{if } a\geq 6.
\end{array}\right.\]

Let's now consider $(n-5,3,2)$. We have that the degree of $\chi^{(n-5,3,2)}$ is
$n(n-1)(n-2)(n-5)(n-7)/24$, which is divisible by 3. If $a\geq 4$ then $a+2b\geq
10$ and so, using Equation \eqref{eq6}, we have that
\begin{eqnarray*}
0&=&\chi^{(n-5,3,2)}_{(c_1,\ldots,c_h)}\\
&=&\chi^{(a+2b+3c+4d-5,3,2)}_{(4^d,3^c,2^b,1^a)}\\
&=&-d\chi^{(a+2b-1,1)}_{(2^b,1^a)}-c\chi^{(a+2b-2,1,1)}_{(2^b,1^a)}+\chi^{
(a+2b-5,3,2)}_{(2^b,1^a)}\\
&=&-d(a-1)+\chi^{(a+2b-5,3,2)}_{(2^b,1^a)}.
\end{eqnarray*}
Since $b=(a-1)(a-2)/2$ for $a\geq 8$ we have that
\begin{eqnarray*}
\chi^{(a+2b-5,3,2)}_{(2^b,1^a)}&=&\binom{b}{2}\chi^{(a-1,1)}_{(1^a)}+b\chi^{(a-3
,3)}_{(1^a)}+\chi^{(a-5,3,2)}_{(1^a)}\\
&=&\frac{(a-1)b(b-1)}{2}+\frac{a(a-1)(a-5)b}{6}\\
&&\hspace{12pt}+\frac{a(a-1)(a-2)(a-5)(a-7)}{24}\\
&=&\frac{a^5-9a^4+29a^3-39a^2+18a}{4}.
\end{eqnarray*}
For $4\leq a\leq 7$ we have that
\[\chi^{(a+2b-5,3,2)}_{(2^b,1^a)}=\left\{\begin{array}{ll}
6&\mbox{if }a=4,\\
60&\mbox{if }a=5,\\
270&\mbox{if }a=6,\\
840&\mbox{if }a=7.
\end{array}\right.\]
So, using the previous formulas for $d$, 
\[0=-d(a-1)+\chi^{(a+2b-5,3,2)}_{(2^b,1^a)}=\left\{\begin{array}{ll}
-24&\mbox{if }a=4,\\
-120&\mbox{if }a=5,\\
-360&\mbox{if }a=6,\\
-840&\mbox{if }a=7,\\
-a(a-1)(a-2)(a-3)&\mbox{if }a\geq 8
\end{array}\right.\]
which gives a contradiction. So $a=3$ and then $b=1$ and $d=0$, that is
$(c_1,\ldots,c_h)=(c_1,\ldots,c_l,3^c,2,1^3)$, with $c_l\geq 5$.

Assume first that $n\equiv 5\mbox{ mod }9$. Then
\[3\,\left|\, \frac{n(n-1)(n-4)(n-5)}{12}\right.=\deg(\chi^{(n-4,2,2)}).\]
If $c\geq 1$ then
\[\chi^{(n-4,2,2)}_{(c_1,\ldots,c_h)}=\chi^{(3c+1,2,2)}_{(3^c,2,1^3)}
=-(c-1)\chi^{(7,1)}_{(3,2,1^3)}+\chi^{(4,2,2)}_{(3,2,1^3)}=-2c\not=0\]
which contradicts $(c_1,\ldots,c_h)$ being 3-vanishing by assumption. So $c=0$
and then $\sum_{c_j<5}c_j=5$ in this case and then the result holds.

Assume now that $n\equiv 8\mbox{ mod }9$. Then
\[3\,\left|\,\frac{n(n-1)(n-2)(n-3)(n-7)(n-8)}{144}\right.=\deg(\chi^{(n-6,3,3)}
).\]
If $c=0$ then
\[\chi^{(n-6,3,3)}_{(c_1,\ldots,c_h)}=\chi^{(c_l-1,3,3)}_{(c_l,2,1^3)}=\chi^{(2,
2,1)}_{(2,1^3)}=-1\]
which contradicts $(c_1,\ldots,c_h)$ being 3-vanishing. If $c\geq 2$ then
\begin{eqnarray*}
\chi^{(n-6,3,3)}_{(c_1,\ldots,c_h)}\!\!&=&\!\chi^{(3c-1,3,3)}_{(3^c,2,1^3)}\\
&=&\!2\binom{c-2}{2}\chi^{(11)}_{(3^2,2,1^3)}+(c-2)\left(\chi^{(8,3)}_{(3^2,2,
1^3)}-\chi^{(8,2,1)}_{(3^2,2,1^3)}\right)+\chi^{(5,3,3)}_{(3^2,2,1^3)}\\
&=&\!(c-2)(c+2)+3\\
&\not=&\!0
\end{eqnarray*}
which also contradicts $(c_1,\ldots,c_h)$ being 3-vanishing. So $c=1$ and we can
write $(c_1,\ldots,c_h)=(c_1,\ldots,c_{l'},7^g,6^f,5^e,3,2,1^3)$ with $l'=0$ or
$c_{l'}\geq 8$.

Consider now $(n-5,1^5)$. The degree of the corresponding character is
\[\frac{(n-1)(n-2)(n-3)(n-4)(n-5)}{120}\]
and so it is divisible by 3. Since $(c_1,\ldots,c_h)$ is 3-vanishing we have
that
\[0=\chi^{(n-5,1^5)}_{(c_1,\ldots,c_h)}=\chi^{(5e+3,1^5)}_{(5^e,3,2,1^3)}=e\chi^
{(8)}_{(3,2,1^3)}+\chi^{(3,1^5)}_{(3,2,1^3)}=e\]
and so $e=0$.

Next consider $(n-7,2^2,1^3)$. The corresponding character has degree
\[\frac{n(n-1)(n-3)(n-4)(n-5)(n-7)(n-8)}{360}\]
which is divisible by 3, since $n\equiv 8\mbox{ mod }9$. If $f\geq 1$ then
\[\chi^{(n-7,2^2,1^3)}_{(c_1,\ldots,c_h)}=\chi^{(6f+1,2^2,1^3)}_{(6^f,3,2,1^3)}
=(f-1)\chi^{(13,1)}_{(6,3,2,1^3)}+\chi^{(7,2^2,1^3)}_{(6,3,2,1^3)}=2f\not=0\]
which contradicts $(c_1,\ldots,c_h)$ being 3-vanishing. So $f=0$.

At last consider $(n-7,2,1^5)$. The corresponding character has degree
\[\frac{n(n-2)(n-3)(n-4)(n-5)(n-6)(n-8)}{840}\]
which is divisible by 3. If $g\geq 1$ then
\[\chi^{(n-7,2,1^5)}_{(c_1,\ldots,c_h)}=\chi^{(7g+1,2,1^5)}_{(7^g,3,2,1^3)}
=-(g-1)\chi^{(15)}_{(7,3,2,1^3)}+\chi^{(8,2,1^5)}_{(7,3,2,1^3)}=-g\not=0\]
which gives to a contradiction. So
$(c_1,\ldots,c_h)=(c_1,\ldots,c_{l'},3,2,1^3)$ with $c_{l'}\geq 8$ (as $n>6$ we
have $l'>0$) and then the result follows from Lemma \ref{l23}.

\item
Assume now that $p=3$, $n\equiv 3\mbox{ or }6\mbox{ mod }9$ and $t=2$. From
$d_t>0$ we have that $n\geq 12$, so that $(n-2,2)$, $(n-4,2,1,1)$, $(n-5,3,1,1)$
and $(n-6,2,2,2)$ are partitions of $n$. If $n\equiv 3\mbox{ mod }9$ we will
show that $\sum_{c_j<3}c_j\leq 3$, while if $n\equiv 6\mbox{ mod }9$ that
$\sum_{c_j<6}c_j\leq 6$.

Write $(c_1,\ldots,c_h)=(c_1,\ldots,c_l,2^b,1^a)$ with $l=0$ or $c_l\geq 3$.
From Case \ref{i1}, if the result does not hold for $t=2$, then it can not hold
for $t=1$ either and then $\sum_{c_j<3}c_j>0$. From Theorem \ref{t13} we then
have that $a\geq 1$.

From the hook formula we have that the degree of $\chi^{(n-2,2)}$ is $n(n-3)/2$
and so by assumption it is divisible by 3. If $a+2b\leq 3$, then $l\geq 1$ and
\[\chi^{(n-2,2)}_{(c_1,\ldots,c_h)}=\chi^{(c_l+a+2b-2,2)}_{(c_l,2^b,1^a)}=\left\{\begin{array}{ll}
-\chi^{(1)}_{(1)}=-1&\mbox{if }a=1,\,\,b=0,\\
0&\mbox{if }a=1,\,\,b=1,\\
-\chi^{(1^2)}_{(1^2)}=-1&\mbox{if }a=2,\,\,b=0,\\
0&\mbox{if }a=3,\,\,b=0.
\end{array}\right.\]
If instead $a+2b\geq 4$, then we have that
\[\chi^{(n-2,2)}_{(c_1,\ldots,c_h)}=\chi^{(a+2b-2,2)}_{(2^b,1^a)}=\left\{\begin{array}{ll}
(b-2)\chi^{(5)}_{(2^2,1)}+\chi^{(3,2)}_{(2^2,1)}=b-1&\mbox{if }a=1,\\
(b-1)\chi^{(4)}_{(2,1^2)}+\chi^{(2,2)}_{(2,1^2)}=b-1&\mbox{if }a=2,\\
(b-1)\chi^{(5)}_{(2,1^3)}+\chi^{(3,2)}_{(2,1^3)}=b&\mbox{if }a=3,\\
b\chi^{(a)}_{(1^a)}+\chi^{(a-2,2)}_{(1^a)}=b+a(a-3)/2&\mbox{if }a\geq
4.
\end{array}\right.\]
As $(c_1,\ldots,c_h)$ is 3-vanishing, so that
$\chi^{(n-2,2)}_{(c_1,\ldots,c_h)}=0$, we then have that
$(a,b)\in\{(1,1),(2,1),(3,0)\}$. It also follows that $l>0$, as $n>4\geq a+2b$.

Consider next $(n-4,2,1,1)$. From the hook formula we have that
$\deg(\chi^{(n-4,2,1,1)})=n(n-2)(n-3)(n-5)/8$ and so it is divisible by 3. Write
now $(c_1,\ldots,c_h)=(c_1,\ldots,c_{l'},4^d,3^c,2^b,1^a)$, with $c_{l'}\geq 5$.
If $d\geq 1$ then
\begin{eqnarray*}
\chi^{(n-4,2,1,1)}_{(c_1,\ldots,c_h)}&=&\chi^{(a+2b+4(d-1),2,1,1)}_{(4^d,2^b,
1^a)}\\
&=&(d-1)\chi^{(a+2b+4)}_{(4,2^b,1^a)}+\chi^{(a+2b,2,1,1)}_{(4,2^b,1^a)}\\
&=&\left\{\begin{array}{ll}
d&\mbox{if }a+2b=3,\\
d+1&\mbox{if }a+2b=4.
\end{array}\right.\\
&\not=&0
\end{eqnarray*}
which gives a contradiction with $(c_1,\ldots,c_h)$ being 3-vanishing. So $d=0$.
Assume now that $a+2b=4$ (and so $(a,b)=(2,1)$). Then
\[\chi^{(n-4,2,1,1)}_{(c_1,\ldots,c_h)}=\chi^{(c_l,2,1,1)}_{(c_l,2,1,1)}=-\chi^{
(1^4)}_{(2,1,1)}=1\not=0\]
which also gives a contradiction. So
\[(c_1,\ldots,c_h)=(c_1,\ldots,c_{l'},3^c,2^b,1^a)\]
with $a+2b=3$ and $l'=0$ or $c_{l'}\geq 5$. If $n\equiv 3\mbox{ mod }9$ we are
done, due to Theorem \ref{l23}. So assume now that $n\equiv 6\mbox{ mod }9$.

From the hooks formula we have that
\[\deg(\chi^{(n-5,3,1,1)})=\frac{n(n-1)(n-3)(n-4)(n-7)}{20},\]
which is divisible by 3. Write
$(c_1,\ldots,c_h)=(c_1,\ldots,c_{l''},5^e,3^c,2^b,1^a)$ with $l''=0$ or
$c_{l''}\geq 6$. Since $a+2b=3$ if $e\geq 1$, then
\[\chi^{(n-5,3,1,1)}_{(c_1,\ldots,c_l)}=\chi^{(5e-2,3,1,1)}_{(5^e,2^b,1^a)}
=(e-1)\chi^{(8)}_{(5,2^b,1^a)}+\chi^{(3,3,1,1)}_{(5,2^b,1^a)}=e\not=0\]
which gives a contradiction. So
$(c_1,\ldots,c_h)=(c_1,\ldots,c_{l''},3^c,2^b,1^a)$.

Consider now the partition $(n-6,2,2,2)$. We have that
\[\deg(\chi^{(n-6,2,2,2)})=\frac{n(n-1)(n-2)(n-5)(n-6)(n-7)}{144}\]
which is divisible by 3 since $n\equiv 6\mbox{ mod }9$. Assume that $c\geq 2$.
Then
\begin{eqnarray*}
\chi^{(n-6,2,2,2)}_{(c_1,\ldots,c_h)}\!\!\!&=&\!\!\!\chi^{(3(c-1),2,2,2)}_{(3^c,
2^b,1^a)}\\
&=&\!\!\!2\binom{c\!-\!2}{2}\chi^{(9)}_{(3^2,2^b,1^a)}
\!+\!(c\!-\!2)\!\left(\chi^{(6,1,1,1)}_{(3^2,2^b,1^a)}\!-\!\chi^{(6,2,1)}_{(3^2,
2^b,1^a)}\right)\!+\!\chi^{(3,2,2,2)}_{(3,2^b,1^a)}\\
&=&\!\!\!\left\{\begin{array}{ll}
\!(c\!-\!2)(c\!+\!2)\!+\!3&\mbox{if }a=3,\,\,b=0,\\
\!(c\!-\!2)c\!+\!1&\mbox{if }a=1,\,\,b=1
\end{array}\right.\\
&\not=&\!\!\!0
\end{eqnarray*}
which gives a contradiction with $(c_1,\ldots,c_h)$ being 3-vanishing. In
particular $c\leq 1$, so $\sum_{c_j<6}c_j\leq 6$ and the result holds.
\end{enumerate}
\end{proof}

\section*{Acknowledgements}

The work contained in this paper is part of the author's master thesis
(\cite{m1}), which was written at the University of Copenhagen, under the supervision of J\o rn B. Olsson, whom the author would like to thank for his help in reviewing the paper.

While writing the paper the author was supported by the DFG grant for the Graduiertenkolleg Experimentelle und konstruktive Algebra at RWTH Aachen University (GRK 1632).


\begin{thebibliography}{9}

\bibitem{b1} G. James, A. Kerber. {\em The Representation Theory of the
Symmetric Group.} Addison-Wesley Publishing Company, 1981.

\bibitem{m3} I. G. Macdonald. {\em On the degrees of the irreducible representations of symmetric groups.}  Bull. London Math. Soc. 3 (1971), 189-192.

\bibitem{b5} G. Malle, G. Navarro, J. B. Olsson. {\em Zeros of characters of
finite groups.} Journal of Group Theory 3 (2000), 353-368.

\bibitem{m1} L. Morotti. {\em On $p$-vanishing and sign classes of the symmetric
group, Applications of the Murnaghan-Nakayama formula.} Master thesis,
Department of Mathematical Sciences, University of Copenhagen (2011).

\bibitem{m2} L. Morotti. {\em Vanishing classes for $p$-singular
characters of symmetric groups.}  J. of Algebra 322 (2015), 334-340.

\bibitem{b4} J. B. Olsson. {\em Combinatorics and Representations of Finite
Groups.} Vorlesungen aus dem Fachbereich Mathematik der Univerit\"{a}t GH Essen,
1994. Heft 20.

\bibitem{r1} G. de B. Robinson, {\em Representation Theory of the Symmetric
Group,} Mathematical Expositions, No. 12. University of Toronto Press, Toronto,
1961.

\end{thebibliography}
\end{document}